\theoremstyle{plain}
\newtheorem{theorem}{Theorem}[section]		
\newtheorem{lemma}[theorem]{Lemma}
\newtheorem{claim}[theorem]{Claim}
\newtheorem{proposition}[theorem]{Proposition}
\newtheorem{corollary}[theorem]{Corollary}
\newtheorem{observation}[theorem]{Observation}
\newtheorem{conjecture}[theorem]{Conjecture}
\newtheorem{problem}[theorem]{Problem}
\newtheorem{definition}[theorem]{Definition}
\theoremstyle{remark}
\newtheorem*{remark}{Remark}
\def\CC{\mathcal{C}}
\def\EE{\mathcal{E}}
\def\FF{\mathcal{F}}
\def\HH{\mathcal{H}}
\def\II{\mathcal{I}}
\def\GG{\mathcal{G}}
\def\PP{\mathcal{P}}
\def\E{\mathbb{E}}
\renewcommand{\P}{\mathbb{P}}
\DeclareMathOperator\ex{ex}
\newcommand{\eps}{\ensuremath{\varepsilon}}
\let\emptyset\varnothing
\newcommand{\hide}[1]{}
\newcommand{\zh}[1]{\textcolor{blue}{ZH: #1}}
\let\originalleft\left
\let\originalright\right
\renewcommand{\left}{\mathopen{}\mathclose\bgroup\originalleft}
\renewcommand{\right}{\aftergroup\egroup\originalright}
\def\imod#1{\allowbreak\mkern10mu({\operator@font mod}\,\,#1)}
\author{Ant\'onio Gir\~ao}
\address[Gir\~ao]{Mathematical Institute, University of Oxford, Andrew Wiles Building, Radcliffe Observatory Quarter, Woodstock Road, Oxford, UK.}
\email{girao@maths.ox.ac.uk}
\author{Zach Hunter}
\address[Hunter]{Department of Mathematics, ETH, Z\"urich, Switzerland.}
\email{zach.hunter@math.ethz.ch}
\date{\today} 
\begin{document}

\thanks{AG was supported by EPSRC grant EP/V007327/1.}

\title{Induced subdivisions in $K_{s,s}$-free graphs with polynomial average degree}
\maketitle
\begin{abstract}
    In this paper we prove that for every $s\geq 2$ and every graph $H$ the following holds. Let $G$ be a graph with average degree $\Omega_H(s^{C|H|^2})$, for some absolute constant $C>0$, then $G$ either contains a $K_{s,s}$ or an induced subdivision of $H$. This is essentially tight and confirms a conjecture of Bonamy, Bousquet, Pilipczuk, Rz\k{a}\.zewski, Thomass\'e, and Walczak in \cite{bonamy}. A slightly weaker form of this has been independently proved by Bourneuf, Buci\'c, Cook, and Davies~\cite{Bucicetal}.
    
    We actually prove a much more general result which implies the above (with worse dependence on $|H|$). We show that for every $ k\geq 2$ there is $C_k>0$ such that any graph $G$ with average degree $s^{C_k}$ either contains a $K_{s,s}$ or an induced subgraph $G'\subseteq G$ without $C_4$'s and with average degree at least $k$. 
    
    Finally, using similar methods we can prove the following. For every $k,t\geq 2$ every graph $G$ with average degree at least $C_tk^{\Omega(t)}$ must contain either a $K_k$, an induced $K_{t,t}$ or an induced subdivision of $K_k$. This is again \textit{essentially} tight up to the implied constants and answers in a strong form a question of Davies. 
\end{abstract}
\section{Introduction}

The study of the unavoidable substructures in graphs of high chromatic number is a well known area of graph theory. 
A graph can have high chromatic number due to the presence of a large clique so it is natural to ask for which families $\mathcal{H}$ of graphs does there exist a function $f:\mathbb{N} \rightarrow \mathbb{N}$ with the property that any graph with clique number at most $k$ and chromatic number at least $f(k)$ must contain an induced subgraph isomorphic to a graph $H\in \mathcal{H}$. 

More generally, let $\mathcal{H}$ be a hereditary class of graphs then we say $\mathcal{H}$ is $\chi$-bounded if there is a function $f: \mathbb{N} \rightarrow \mathbb{N}$ such that for every $G\in \mathcal{H}$ satisfies $\chi(G)\leq f(\omega(G))$, where $\omega(G)$ is the order of the largest clique in $G$. Finally we say $\mathcal{H}$ is polynomially $\chi$-bounded if $f$ can be taken to be a polynomial. 

As usual, given a family of graphs $\mathcal{F}$, we say $G$ is $\mathcal{F}$-free if there is no induced subgraph of $G$ isomorphic to a graph in $\mathcal{F}$. 
A classical result of Erd\H{o}s states that all finite families $\mathcal{F}$ for which the class of $\mathcal{F}$-free graphs is $\chi$-bounded must contain a tree and it is a long standing open conjecture due Gy\'arf\'as~\cite{gyarfasconj} and independently by Sumner~\cite{Sumner} that the converse holds. Namely, that for every $T$, the family of $T$-free graphs is $\chi$-bounded. 

When the family of excluded graphs is not finite the situation seems to be more delicate. A remarkable result of Scott and Seymour \cite{HolesResidues} (building on previous work with Chudnovsky and Spirkl~\cite{longholes,longoddholes}) confirms in a very strong form several conjectures of Gy\'arf\'as~\cite{G87}. It says that for every $m,n \in \mathbb{N}$ the family of graphs without an induced cycle of length congruent with $m$ (modulo $n$) is $\chi$-bounded. One might then be tempted to conjecture that the class of graphs avoiding all induced subdivisions of a graph $H$ is $\chi$-bounded but this is false as shown by Pawlik, Kozik, Krawczyk, Laso\'n, Micek, Trotter, and Walczak \cite{counterex} who constructed for every $k\geq 2$ a family of line segments in the plane for which its \textit{intersection graph} is triangle-free and has chromatic number greater than $k$. It is not hard to see that any \textit{proper} (i.e. all subdivided paths have length at least $2$) induced subdivision of a non-planar graph cannot be represented as an intersection graph of line segments in the plane.

A lovely result of  Bria\'nski, Davies, and Walczak~\cite{separatingpoly} (extending ideas of Carbonero, Hompe, Moore, and Spirkl \cite{CHMS23}) shows there are $\chi$-bounded families of graphs for which the growth rate of $f$ is arbitrarily high which sparks the broader question of when a $\chi$-bounded family is polynomially $\chi$-bounded. This question has attracted a lot of attention (see e.g. \cite{survey}) but a full classification seems to be out of reach. 
Somewhat surprisingly, we do not even know whether the family of graphs avoiding an induced fixed path of length $t\geq 5$ is polynomially $\chi$-bounded (as noted by Trotignon and Pham \cite{TP}).

A similar notion to $\chi$-boundedness, but with average degree instead of chromatic number was recently introduced. Intuitively, while a class is $\chi$-bounded if cliques are the only thing that can force large chromatic number, it is ``degree-bounded'' if, instead, balanced bicliques are the only thing that can force large average degree. 
Formally, we say that a hereditary family $\mathcal{F}$ is \textit{degree-bounded} if there exists a function $g: \mathbb{N} \rightarrow \mathbb{N}$ such that for every $G\in \mathcal{F}$, we have $d(G) \leq g(\tau(G))$, where we write $d(G)$ for the average degree of $G$ and $\tau(G)$ for the \textit{biclique number} of $G$, which is the largest integer $s$ so that $G$ has a (not necessarily induced) copy of $K_{s,s}$. Any such function $g$ is called a \textit{degree-bounding function} for the class.

An important result in this area due to K\"uhn and Osthus~\cite{KO1} states that for every graph $H$, the class of graphs which do not contain an induced subdivision of $H$ is degree-bounded. 
\begin{theorem}[K\"uhn and Osthus]
    For every graph $H$ and integer $s$, there is an integer $p(s,H)$ such that every graph $G$ without a $K_{s,s}$ and with average degree at least $p(s,H)$ contains an induced subdivision of $H$. 
\end{theorem}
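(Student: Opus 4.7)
The plan is to extract a dense induced subgraph and then iteratively build an induced subdivision of $H$, constructing one branch path at a time while maintaining a ``clean reservoir'' of vertices around each branch vertex. First I would pass to an induced subgraph $G' \subseteq G$ with $\delta(G') \geq p(s,H)/2$ by the standard minimum-degree extraction. Working inside $G'$, the central tool is the Kővári--Sós--Turán observation that in a $K_{s,s}$-free graph, any set $U$ of vertices has at most $(s-1)\binom{|U|}{s}$ common superneighbors of size $s$, so for $|U|$ polynomially smaller than the minimum degree, almost every vertex has the overwhelming majority of its neighbors outside~$U$.

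Next I would select the branch vertices $b_1,\ldots,b_h$ (for $h = |V(H)|$) one by one, at each step ensuring that $b_i$ has a large ``private'' neighborhood $N_i$ inside $G'$, disjoint from the private neighborhoods already assigned to $b_1,\ldots,b_{i-1}$, and large enough to survive all future damage. A simple pigeonhole/greedy argument works since the graph has polynomially many vertices and the minimum degree is much larger than $h$.

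The core step is to process the edges of $H$ in some fixed order, and for each edge $v_iv_j$ build an induced $b_i$--$b_j$ path $P_{ij}$ whose internal vertices lie outside all previously chosen branch vertices, previously built paths, and a small ``buffer'' around them. To build $P_{ij}$, I would run a BFS from $b_i$ restricted to the clean region; by the $K_{s,s}$-free growth lemma, successive BFS layers expand by a factor polynomial in $\delta$, so within $O(\log|G|)$ steps the BFS reaches a neighbor of $b_j$. I would then extract an induced path by the standard ``layer-by-layer'' greedy procedure: distance guarantees rule out chords between non-adjacent BFS layers, while the $K_{s,s}$-free condition bounds, at each layer, how many candidate vertices can be simultaneously adjacent to many vertices of the previous layer, so one can always pick a chord-free continuation.

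The main obstacle is book-keeping: each path $P_{ij}$ forbids a new set of vertices (the path itself together with an $s$-neighborhood buffer to block future chords), and over the course of $|E(H)|$ edge insertions the forbidden set grows. The argument succeeds only if every private neighborhood $N_k$ remains sufficiently large after all insertions and every BFS layer continues to expand inside the surviving clean region. Tracking this requires the initial minimum degree to dominate a quantity of the form $s^{O(|V(H)|)}$ (since the buffer against $K_{s,s}$-chords costs a factor of roughly $s$ per step), which yields the asserted bound on $p(s,H)$. The delicate point will be to organise the buffers so that the constants do not blow up super-exponentially in $|H|$; everything else is a careful iteration of the KST-style neighborhood control.
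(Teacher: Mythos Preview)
The paper does not actually prove this theorem; it is cited from \cite{KO1}, and the paper's own contribution is the quantitatively sharper Theorem~\ref{thm: main1}. Assessing your sketch on its own terms, there is a real gap in the buffering step. The K\H{o}v\'ari--S\'os--Tur\'an bound only controls the number of vertices with \emph{at least $s$} neighbours in a fixed set $U$; it says nothing about vertices with one or two neighbours in $U$. But for $\bigcup P_{ij}$ to be induced, every internal vertex of a new path must have \emph{zero} neighbours among the internal vertices of all earlier paths. Enforcing this by deletion means removing the full neighbourhood $N(V(P_{ij}))$, of size up to $|P_{ij}|\cdot\Delta(G)$, and you never bound $\Delta(G)$ --- the hypothesis is only on average degree --- so a single high-degree vertex lying on some $P_{ij}$ can make its buffer swallow the whole reservoir. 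Your remark that ``the $K_{s,s}$-free condition bounds \dots\ how many candidate vertices can be simultaneously adjacent to many vertices of the previous layer'' is true but beside the point: adjacency to \emph{one} previous vertex already creates a chord. A related issue is that deleting these buffers can disconnect the remaining branch vertices from one another, and nothing in your BFS-expansion claim rules that out.

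For contrast, the paper's proof of the stronger Theorem~\ref{thm: main1} never builds paths one at a time. It first forces either an almost-regular induced subgraph or a very unbalanced bipartite configuration (Lemma~\ref{Lem:dichotomy}), and in each branch manufactures an induced $1$-subdivision of some dense auxiliary (hyper)graph $\HH$ (Lemmas~\ref{Lem:unbalanced} and \ref{almost regular}). The key is then Observation~\ref{subdivision subgraphs are induced}: any subgraph of $\HH$ lifts to an \emph{induced} subgraph of its $1$-subdivision, so applying a non-induced subdivision theorem (Theorem~\ref{balanced subdivision}) inside $\HH$ yields an induced subdivision in $G$ for free. The ``induced'' property is obtained structurally rather than by local avoidance, which is exactly what a buffer strategy cannot deliver without first controlling the maximum degree.
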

Their bounds for $p(s,H)$ are roughly triply exponential in $s$, for fixed $H$. A natural conjecture raised by Bonamy et al.~\cite[Conjecture~33]{bonamy} asserts that actually $p(s,H)$ could be taken to be a polynomial in $s$. Some partial results towards this conjecture are known. Indeed, Scott, Seymour, and Spirkl~\cite{SSS} established a quantitative strengthening of a result of Kierstead and Penrice \cite{KP}, by showing that the class of $T$-induced-free graphs is polynomially degree bounded (which of course implies a polynomial upper bound for $p(s,T)$). This in turn generalized another theorem of Bonamy, Bousquet, Pilipczuk, Rz\k{a}\.zewski,
Thomass\'e and Walczak \cite{bonamy}, who proved the same result when $T$ is a path.

Our first theorem confirms this conjecture, in a very strong form.

\begin{theorem}\label{thm: main1}
    For each integer $h$, and all large $s$ the following holds. 
    
    Let $G$ be a graph with $d(G)\geq s^{500h^2}$. Then $G$ either contains a (not necessarily induced) $K_{s,s}$ or an induced proper subdivision of $K_h$. In fact, we ensure this subdivision is \textit{balanced} (meaning each edge of $K_h$ is replaced by a path of some common length, $\ell$). 
\end{theorem}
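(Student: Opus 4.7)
The plan is to deduce Theorem~\ref{thm: main1} from the second, more general result stated in the abstract: for every $k \geq 2$ there is $C_k > 0$ such that any graph of average degree at least $s^{C_k}$ either contains a $K_{s,s}$ or admits an induced $C_4$-free subgraph of average degree at least $k$. Choosing $k = k(h)$ to be a sufficiently large polynomial in $h$ and letting the constant $500$ absorb $C_{k(h)}$, the hypothesis $d(G) \geq s^{500h^2}$ either yields the desired $K_{s,s}$ outright or produces a $C_4$-free induced subgraph $G' \subseteq G$ of average degree $\mathrm{poly}(h)$. Since any induced subdivision found inside $G'$ remains induced in $G$, the problem reduces to the following claim: \emph{every $C_4$-free graph with average degree at least $\mathrm{poly}(h)$ contains an induced balanced proper subdivision of $K_h$.}

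To prove the reduced claim, I would first pass to a subgraph of minimum degree $d = \mathrm{poly}(h)$ by iteratively deleting low-degree vertices (an operation which trivially preserves $C_4$-freeness). Next I would pick $h$ branch vertices $v_1, \dots, v_h$ that are pairwise at distance at least some $\ell$; in a $C_4$-free graph the BFS balls of radius $\ell/2$ are comparatively small, so such a spread-out set can be chosen greedily as long as the host has many vertices, which is guaranteed since $|V(G)| \geq d(G) \geq s^{500h^2}$. Then, growing BFS-trees out of each $v_i$, I would harvest for every pair $(i,j)$ a large collection of short $v_i$--$v_j$ paths, and apply pigeonhole over the possible lengths to single out a common value $\ell^*$ such that, for every pair, a polynomial-sized family of candidate $v_i$--$v_j$ paths of length exactly $\ell^*$ exists. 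This uniform $\ell^*$ will be the length of every edge of the final balanced subdivision, so the balanced requirement is essentially free.

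The main obstacle is the final step: choosing one candidate path per pair so that the resulting $\binom{h}{2}$ paths are internally vertex-disjoint and their union, together with the branch vertices, induces no extra edges. Here the $C_4$-freeness plays its crucial role — any two internally disjoint paths $P, Q$ can have at most one neighbour of each vertex of $P$ on $Q$ (else a $C_4$ appears), so cross-edges and chords are automatically very sparse. I would finish by a probabilistic selection paired with an alteration step: sample one path per pair from its candidate family, then delete the few pairs that collide or that create a forbidden chord, using the polynomial abundance of candidates to argue that a complete and clean selection survives. This $C_4$-free analysis is what drives the polynomial dependence on $h$; it replaces the Kühn--Osthus-style iterated cleaning that otherwise produces triply-exponential bounds, and is the technical heart of the whole argument.
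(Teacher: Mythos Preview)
Your reduction to Theorem~\ref{thm:main2} is the route the abstract explicitly flags as giving a \emph{worse} dependence on $|H|$, and indeed it cannot recover the exponent $500h^2$: Theorem~\ref{thm:main2} has $C_k=5000k^4$, and to extract an induced $K_h$-subdivision from a $C_4$-free graph one needs (by Proposition~\ref{base case}) average degree $\ge h^{50000}$, so the composed exponent is of order $h^{200000}$, not $h^2$. The paper therefore does \emph{not} prove Theorem~\ref{thm: main1} via Theorem~\ref{thm:main2}; it runs a direct argument (the dichotomy Lemma~\ref{Lem:dichotomy} with $L=d^6$, then Proposition~\ref{polychotomy Kh} in the unbalanced case and Lemma~\ref{Lem: denseorC_4} in the almost-regular case, with Lemma~\ref{one sided erdos hajnal} handling the dense outcome).

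More seriously, your final step --- finding an induced balanced $K_h$-subdivision inside a $C_4$-free graph by BFS plus random selection --- does not work as written. The key sentence ``any two internally disjoint paths $P,Q$ can have at most one neighbour of each vertex of $P$ on $Q$ (else a $C_4$ appears)'' is false: $C_4$-freeness only forbids a vertex $v\in P$ from having two neighbours at distance exactly $2$ along $Q$, so $v$ may still have $\Theta(|Q|)$ neighbours on $Q$. Cross-edges between candidate paths are therefore not sparse enough for an alteration argument to kill them, and nothing in your outline controls chords \emph{within} a single candidate path either. This is precisely the hard part, and the paper does not attempt anything like a path-building argument here. Its proof of the $C_4$-free case (Proposition~\ref{base case}) again invokes the dichotomy Lemma~\ref{Lem:dichotomy}: in the unbalanced branch it random-sparsifies to find an induced $1$-subdivision of a dense multigraph (Lemma~\ref{Lem:unbalanced}); in the almost-regular branch it uses a more delicate sparsification (Lemma~\ref{almost regular}) that exploits Corollary~\ref{supersat} to control the probability that randomly-retained neighbourhoods are independent. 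In both branches the balanced subdivision is then read off via Observation~\ref{subdivision subgraphs are induced} together with the black-box Theorem~\ref{balanced subdivision}, rather than built by hand.
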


By taking a random graph $G \sim G(N,p)$ on $ N:=s^{h^2/100}$ vertices (for\footnote{Occasionally we will write `$x\ggg y$' to informally say that $x$ is sufficiently big compared to $y$.} $s\ggg h$) with $p=1-\frac{h^2\log s}{s}\geq 1/2$, we have with positive probability that $d(G)\geq s^{h^2/100}/4$ and $G$ does not contain a $K_{s, s}$ nor an independent set of size $h^2/4$ (which implies there is no proper induced subdivision of a $K_h$). This shows the theorem is tight as a function of $s$. It is also clear that any proper induced subdivision of a $K_h$ contains an induced subdivision of every graph $H$ with $|H|\leq h$.   

In other words, Theorem~\ref{thm: main1} says that the class of graphs without an induced subdivision of a fixed graph $H$ is degree-bounded with a polynomial degree-bounding function. It is therefore natural to ask whether the same phenomenon holds for every such class i.e. whether every degree-bounded class of graphs has a polynomial degree-bounding function. In a very recent paper \cite{c4free} the authors together with Du, McCarty and Scott proved that every degree-bounded family of graphs is essentially exponentially degree-bounded very much in contrast to the $\chi$-boundedness case. 
More precisely, it was shown that for every hereditary degree-bounded class of graphs $\mathcal{F}$, there exists a constant $C_{\mathcal{F}}$ so that $(C_{\mathcal{F}})^{s^3}$ is a degree-bounding function for $\mathcal{F}$. 

Our strongest result establishes a polynomial bound for the degree-bounding function of every degree-bounded hereditary class of graphs which follows straightforwardly from the following stronger statement. 
\begin{theorem}\label{thm:main2}
Fix any $k\geq 2$ and suppose $s$ is sufficiently large. Any graph $G$ with average degree $\ge s^{5000k^4}$ either contains a $K_{s,s}$ (not necessarily induced) or an induced subgraph $G'\subseteq G$ with no $C_4$ and $d(G)\geq k$. 
\end{theorem}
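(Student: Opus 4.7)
The plan is to build the desired induced $C_4$-free subgraph $G' \subseteq G$ iteratively, using the $K_{s,s}$-free assumption to control codegrees throughout the construction. As a preprocessing step, I would first pass to an induced subgraph of minimum degree at least $d(G)/2 \ge s^{5000k^4}/2$ by the standard greedy deletion argument; this preserves the $K_{s,s}$-free hypothesis.

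The atomic tool would be a codegree-taming lemma along the lines of: in any $K_{s,s}$-free graph of minimum degree $D$, one can find (by an averaging or dependent-random-choice argument) a vertex $v$ and a subset $B \subseteq N(v)$ of size at least $D/s^{O(1)}$ in which every two vertices have codegree at most $s-1$. Applying this lemma repeatedly, I would maintain inductively a pair $(A_i, B_i)$ with $A_i$ a growing \emph{core} such that $G[A_i]$ is induced $C_4$-free, $B_i$ a large \emph{reservoir}, every vertex of $A_i$ has at least $|B_i|/s^{O(1)}$ neighbors in $B_i$, every pair in $A_i$ has codegree at most $1$ inside $B_i$, and $|B_i| \ge s^{5000 k^4 - O(i k^2)}$. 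At each step, the bulk of $B_i$ provides candidates $v_{i+1}$ whose codegree with every $u \in A_i$ within $A_i$ is at most $1$ (so no new induced $C_4$ is created inside $A_{i+1}$), and whose interaction with a sub-reservoir $B_{i+1} \subseteq B_i$ preserves the invariants. After $\Theta(k^2)$ iterations, the core has size at least $k^2$, and the codegree-$1$ condition on pairs of core vertices in $B_i$ is exactly what lets me certify that each new $v_{i+1}$ contributes sufficiently many new edges to push the average degree of $G[A]$ above $k$.

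The principal obstacle is that the \emph{induced} $C_4$-free requirement is strictly stronger than merely controlling codegrees: a chordless $4$-cycle also demands the absence of chord edges, so at each extension step one has to track simultaneously the codegrees of $v_{i+1}$ with each $u \in A_i$ \emph{and} the full adjacency pattern of $v_{i+1}$ to $A_i$. This bookkeeping is what forces the loss of a factor $s^{O(k^2)}$ per round (roughly $s^{O(k)}$ for codegree cleaning, $s^{O(k)}$ for chord exclusion, raised to account for pairs within $A_i$), and iterating $\Theta(k^2)$ rounds yields the final exponent $5000 k^4$. A secondary subtlety is ensuring that the reservoir shrinkage in the codegree-taming step only costs polynomial factors in $s$ rather than in $|B|$; this is precisely where $K_{s,s}$-freeness (as opposed to merely bounded biclique number $K_{s,t}$ for $t \gg s$) is used.
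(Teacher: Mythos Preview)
Your plan has a genuine gap in the edge-density accounting for the core. You maintain a core $A_i$ of size at most $O(k^2)$ and a reservoir $B_i$, with the invariants that each $u\in A_i$ has at least $|B_i|/s^{O(1)}$ neighbours in $B_i$ and any two core vertices have at most one common neighbour in $B_i$. But the theorem is stated for $s$ sufficiently large, so $s\gg k$ is permitted; then the $A_i$--$B_i$ edge density is only $1/s^{O(1)}\ll 1/|A_i|$, and a typical reservoir vertex has fewer than one neighbour in $A_i$. Your codegree-$1$ invariant caps the number of $b\in B_i$ with $d_{A_i}(b)\ge 2$ at $\binom{|A_i|}{2}$, but it does not force any such $b$ to exist: it is perfectly consistent with all your invariants that \emph{every} vertex of $B_i$ has degree at most $1$ into $A_i$. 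In that scenario each new $v_{i+1}$ contributes at most one edge to $G[A]$, so after $m$ rounds $d(G[A_m])\le 2$ regardless of $m$, and you never reach average degree $k$. The per-round loss of $s^{O(k^2)}$ you describe tracks only the shrinkage of the reservoir, not the accumulation of edges inside the core, and the latter is where the argument breaks. (A secondary issue: the ``codegree-taming lemma'' you posit---a large $B\subseteq N(v)$ in which every pair has codegree at most $s-1$ in $G$---is asserted without proof and is not a standard consequence of dependent random choice; $K_{s,s}$-freeness bounds the common neighbourhood of $s$-sets, not of pairs.)

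For contrast, the paper does not build the $C_4$-free subgraph vertex by vertex. It applies a global dichotomy (Lemma~\ref{Lem:dichotomy}) into an almost-regular case and a highly unbalanced bipartite case. In the almost-regular case, a random sparsification (Lemma~\ref{Lem: denseorC_4}) either produces a $C_4$-free induced subgraph of polynomial average degree directly, or a very dense induced subgraph on $\ge d^{1/5}$ vertices; the dense outcome is then ruled out via a one-sided Erd\H{o}s--Hajnal lemma (Lemma~\ref{one sided erdos hajnal}), exploiting that one may assume $G$ avoids a fixed target bipartite graph $H_k$ (Lemma~\ref{Lem: Hk}) that is itself $C_4$-free with $d(H_k)\ge k$. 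The unbalanced case is handled by a separate polychotomy (Proposition~\ref{polychotomy degree bounded}) together with a black-box degree-bounding result from~\cite{c4free} (Theorem~\ref{old degree-bound}).
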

\begin{remark}
    The exponent `$5000k^4$' is of roughly the right shape (being polynomial in $k$). Indeed, by considering $G \sim G(s^{k^2/100},1-1/s^{9/10})$ for large enough $s$, one gets that the exponent must be at least $k^2/100$.
\end{remark}

We turn now to a recent question asked by James Davies \cite{Davies1} on $\chi$-boundedness. 
\begin{problem}\label{Prob: Davies}
    For $t \ge 1$, let $\CC$ be the family of graphs of without an induced subdivision of $K_{2,t}$ (not necessarily proper). Is this family polynomially $\chi$-bounded? 
\end{problem}
An affirmative answer follows as a simple corollary of Theorem~\ref{thm: main1}. Indeed, let $\omega \coloneqq \omega(G)$. We shall show that for every $G\in \CC$, $d(G)={(t\omega)}^{O(t^2)}$. Observe that if $G$ contains a $K_{\omega+1, {(t\omega(G))}^{t}}$ then by Ramsey' Theorem, we must have an independent edge joined to an independent set of size $t$ which forms an induced $K_{2,t}$. If not, applying Theorem~\ref{thm: main1} with $H=K_{2,t}$ and $s=(t\omega)^{t}$, we must have an induced subdivision of $K_{2,t}$, as required. 

We can actually show a much stronger result which answers Problem~\ref{Prob: Davies} with essentially tight bounds. 

As before, we say a subdivision is proper if every edge is subdivided at least once and further we say a subdivision is balanced if all paths have the same length. 
 
\begin{theorem}\label{thm: main}
For every $s,t \in \mathbb{N}$ with $s\le t$, the following holds for all large $k$. Let $G$ be a graph with $ d(G)\ge k^{4000t}$, then either:
\begin{itemize}
    \item $G$ contains a $k$-clique;
    \item $G$ contains an induced proper balanced subdivision of $K_h$, with $h\ge d(G)^{1/(10^{9}s)}$;
    \item $G$ contains an induced $K_{s,t}$.
\end{itemize}
\end{theorem}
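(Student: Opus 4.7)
The plan is to assume $G$ has no $K_k$ and no induced $K_{s,t}$, and to construct the required induced proper balanced subdivision of $K_h$ with $h\ge d(G)^{1/(10^9 s)}$. The first step is a Ramsey reduction. Suppose $G$ contains a (not necessarily induced) copy of $K_{s_1,s_1}$ with $s_1\ge R(k,t)$. Since $G$ is $K_k$-free, Ramsey's theorem applied inside each side of the biclique produces an independent $t$-set on each side; these two independent sets together span an induced $K_{t,t}$, which contains an induced $K_{s,t}$ because $s\le t$. Using the standard bound $R(k,t)\le \binom{k+t-2}{k-1}\le c_t\, k^{t-1}$ with a constant $c_t$ depending only on $t$, we may henceforth assume that $G$ is $K_{s_1,s_1}$-free with $s_1 = O(k^{t-1})$.

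Plugging this $s_1$ into Theorem~\ref{thm: main1} as a black box only gives $h^2\le \log d(G)/\log s_1 = O(1)$, which is far too weak. The substance of the proof therefore lies in revisiting the construction underlying Theorems~\ref{thm: main1} and~\ref{thm:main2} and exploiting the $K_k$-free hypothesis directly, rather than treating these theorems as black boxes. The key mechanism is that those proofs iteratively refine the host graph via Ramsey-type cleanups that remove unwanted homogeneous substructures, and the cumulative loss in average degree along these cleanups is what produces the super-polynomial factor appearing in their hypotheses. When $G$ is additionally $K_k$-free, every vertex set of size $n$ contains an independent set of polynomial size $n^{1/(k-1)}$, so each cleanup step loses only a polynomial factor in average degree. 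Threading this improvement through roughly $s$ levels of the construction replaces the cumulative super-polynomial loss by a polynomial one of the form $d(G)^{O(1/s)}$, matching the exponent $1/(10^9 s)$ claimed in the conclusion.

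The main obstacle is to carry out this adapted construction while simultaneously guaranteeing the three properties required of the final subdivision: balanced (all $\binom{h}{2}$ connecting paths share a common length $\ell$), induced (no chord-edges between distinct paths or between a path and a branching vertex off it), and proper (no path is a single edge). The plan is to fix $\ell$ in advance based on the parameters $k$, $s$, $t$, and then grow the subdivision level by level, applying at each level the $K_k$-free Ramsey cleanup inside a suitable induced subgraph so that enough average degree remains to extend every existing branching vertex by a new length-$\ell$ path to the newly added one. The careful parameter accounting that combines the path-extension template of Theorem~\ref{thm: main1} with the $K_k$-free Ramsey gain from the first step is the heart of the argument.
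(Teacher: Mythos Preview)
Your initial Ramsey reduction (finding an induced $K_{t,t}$ inside any $K_{s_1,s_1}$ with $s_1\ge R(k,t)$) is correct, but from that point on the proposal is a plan rather than a proof, and the plan does not match how the argument actually runs. Two concrete gaps:

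\textbf{The mechanism you describe is not the one that works.} You speak of ``growing the subdivision level by level'' after fixing a common path-length $\ell$ in advance, and of ``threading a Ramsey cleanup through roughly $s$ levels''. No such iterative branch-vertex-by-branch-vertex construction appears in the paper, and you give no indication of how one would control induced-ness of the partial subdivision across levels, nor how the balanced length $\ell$ would be maintained. The paper never fixes $\ell$; instead it reduces to the situation where $G$ contains, as an induced subgraph, the $1$-subdivision of some $s$-uniform multihypergraph $\mathcal{H}$ of large average degree (or a $C_4$-free induced subgraph of large average degree), and then invokes the black-box balanced-subdivision theorem of Gil Fern\'andez--Hyde--Liu--Pikhurko--Wu inside an auxiliary graph extracted from $\mathcal{H}$. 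The balanced subdivision is found in one shot at the end, not grown.

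\textbf{The role of the $K_k$-free hypothesis is different from what you state.} You claim that the gain comes from ``every vertex set of size $n$ contains an independent set of size $n^{1/(k-1)}$'', applied at each of $\sim s$ cleanup stages. In fact the $K_k$-free and induced-$K_{s,t}$-free assumptions are used together, via a dependent-random-choice supersaturation lemma, to guarantee $\Omega_s(n^s)$ independent $s$-sets in any $n$-vertex induced subgraph with $n\ge k^{100t}$, and to rule out the existence of an induced subgraph $G'$ on $n'$ vertices with $d(G')\ge (n')^{1-1/(100s)}$. These feed into a single structural dichotomy (unbalanced bipartition versus almost-regular), after which the two branches are handled by separate sampling lemmas. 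There is no ``$s$-level'' iteration; the exponent $1/s$ arises because one passes from an $s$-uniform hypergraph to a graph (losing a $1/(s-1)$ in the exponent of the average degree) before applying the balanced-subdivision theorem.

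In short, the proposal correctly diagnoses that black-boxing Theorem~\ref{thm: main1} is too lossy, but the sketch that follows is too vague to be a proof and describes a strategy (path-by-path growth with per-step Ramsey cleanup) that is not the one carried out in the paper and whose feasibility you have not established.
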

\begin{remark}
    We did not try to optimize the constants `$4000$' and `$1/10^{9}$', here (nor the constants `$500$' and `$5000$' from our last two theorems).  
\end{remark}

We note that this result is \textit{essentially} sharp (up to the constant in the exponent) when $k\ggg t$. Indeed, if $G$ is a graph without $k$-cliques or independent sets of size $t$, then it will also lack an induced proper subdivision of $K_t$; and a simple probabilistic construction shows that there exist such $G$ with average degree $\ge k^{\Omega(t)}$ (cf. \cite[Proof of Corollary~6.9]{c4free}). Meanwhile, if $G$ does not contain $K_{s,s}$ as a subgraph, then it is $K_{2s}$-free and $K_{s,s}$-free, and if it lacks independent sets of size $h$ it will lack a proper subdivision of $K_h$; another probabilistic construction shows that there are such graphs with average degree $\ge h^{\Omega(s)}$ (cf. \cite{c4free}).

Also, the above result becomes false if we remove either of the first two bullets (by either taking $G$ to be an arbitrarily large clique, or a graph with no $C_4$ of arbitrarily large degree). Moreover, if we removed the last bullet, then we know that $G$ could be triangle-free with arbitrarily high average degree and without an induced proper subdivision of $K_5$ (due to \cite{counterex}). 

\begin{remark}
    \hide{A weaker version of Theorem~\ref{thm: main1} was proved independently by Bourneuf, Buci\'c, Cook and Davies \cite{Bucicetal}. The same authors also show a version of Theorem~\ref{thm:main2} which is quantitatively far from optimal. }
   
    In independent contemporaneous work \cite{Bucicetal}, Bourneouf, Buci\'c, Cook and Davies have obtained some results that overlap with those proved here. Firstly, they proved a version of Theorem~\ref{thm: main1} (cf. \cite[Theorem~1.2]{Bucicetal}), but the dependence on $H$ is not established. Secondly, they showed a reduction (cf. \cite[Theorem~1.3]{Bucicetal}) which made some partial progress towards our main theorem (Theorem~\ref{thm:main2}).    
    Lastly, Lemma~\ref{one sided erdos hajnal}, a key lemma of ours, is a quantitatively explicit version of \cite[Theorem~1.5]{Bucicetal}.   

    While both this paper and \cite{Bucicetal} build upon previous techniques of \cite{c4free}, the intermediate steps are quite different.
\end{remark}

\section{Preliminaries}
\subsection{Notation}
Throughout this paper, for positive integer $n$ we write $[n]:=\{1,2,\dots,n\}$. 

Given a multigraph $G$, we denote $d(G)$ to be the average degree of $G$ (i.e. $d(G)\coloneqq \frac{2|E(G)|}{|G|}$). Similarly for $s$-uniform multihypergraphs $\HH$ we write $d(\HH):= \frac{s|E(G)|}{|V(\HH)|}$.

For a family of graphs $\mathcal{F}$, we say $G$ is $\mathcal{F}$-free if for all $F\in \mathcal{F}$, $G$ does not contain any copy of $F$ as an induced subgraph. 
As usual, we denote $\omega(G)$ to be the size of the largest clique in $G$ and $e(G):= |E(G)|$. Moreover, for a subset $B\subset V(G)$, we let
 $d_B(x)=|N_B(x)|$ where $N_B(x)=\{ y\in B: y\in N(x)\}$. Finally, we denote $G[A,B]$ to be the bipartite graph induced between $A$ and $B$. 

\subsection{Tools}

We shall need the classical Ramsey's Theorem. 
\begin{theorem}
    $R(s,k)\leq \binom{s+k}{s}=O_s(k^s)$. 
\end{theorem}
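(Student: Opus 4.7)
The plan is to prove the inequality $R(s,k) \le \binom{s+k}{s}$ by induction on $s+k$, using the classical Erd\H{o}s--Szekeres recurrence $R(s,k) \le R(s-1,k) + R(s,k-1)$. The base cases are $s=1$ and $k=1$, where $R(1,k)=R(s,1)=1 \le \binom{s+k}{s}$ trivially. For the inductive step, given any $2$-coloring of the edges of $K_N$ with $N = R(s-1,k) + R(s,k-1)$, I would fix a vertex $v$ and partition $V(K_N) \setminus \{v\}$ into $A$ (vertices joined to $v$ by red edges) and $B$ (vertices joined to $v$ by blue edges). By pigeonhole, either $|A| \ge R(s-1,k)$, in which case $A$ contains either a red $K_{s-1}$ (which extends through $v$ to a red $K_s$) or a blue $K_k$; or $|B| \ge R(s,k-1)$, and symmetrically we find a red $K_s$ or a blue $K_{k-1}$ that extends through $v$ to a blue $K_k$.

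Having established the recurrence, the induction closes via Pascal's identity $\binom{s+k}{s} = \binom{s+k-1}{s-1} + \binom{s+k-1}{s}$: the inductive hypothesis gives $R(s-1,k) \le \binom{s+k-1}{s-1}$ and $R(s,k-1) \le \binom{s+k-1}{s}$, and summing these yields $R(s,k) \le \binom{s+k}{s}$. For the asymptotic claim $\binom{s+k}{s} = O_s(k^s)$, I would simply expand $\binom{s+k}{s} = \frac{(s+k)(s+k-1)\cdots(k+1)}{s!}$, which is a polynomial in $k$ of degree $s$ with leading coefficient $1/s!$, giving the stated bound for $s$ fixed and $k \to \infty$.

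This is a classical argument, so there is no real obstacle to the proof; the only small points to watch are (i) choosing the base cases at $s=1$ or $k=1$ rather than at $0$ so that the pigeonhole step in the recurrence is actually meaningful, and (ii) verifying that Pascal's identity aligns precisely with the Erd\H{o}s--Szekeres recurrence, which it does because incrementing either argument of $R$ corresponds exactly to the two summands on the right-hand side of Pascal.
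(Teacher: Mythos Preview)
Your argument is correct and is exactly the classical Erd\H{o}s--Szekeres proof. Note, however, that the paper does not actually prove this statement: it is quoted in the preliminaries as ``the classical Ramsey's Theorem'' and used as a black box, with no proof given. So there is nothing to compare against; your write-up simply supplies the standard textbook argument that the paper omits.
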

\noindent Next we recall a standard consequence of dependent random choice.
\begin{lemma}\label{DRC}
    Let $G$ be graph with $n$ vertices, and consider some integer $s\ge 1$. 
    
    If $\#(x\in V(G):d(x)\ge n^{1-1/100s}) \ge \sqrt{n}$, then $G$ has a set $S$ of $n^{1/3}-1$ vertices so that any $s$-subset $S'\subset S$ has a common neighborhood of size at least $n^{0.9}$.
\end{lemma}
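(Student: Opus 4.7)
The plan is to apply a standard dependent random choice argument with parameter $t \coloneqq 10s$. Let $H \coloneqq \{x \in V(G) : d(x) \geq n^{1-1/(100s)}\}$, so by hypothesis $|H| \geq \sqrt{n}$. I would sample $t$ vertices $v_1,\dots,v_t$ independently and uniformly from $V(G)$ (with replacement), and set $A \coloneqq N(v_1) \cap \cdots \cap N(v_t)$. We then want to show that with positive probability, $|A|$ is large while $A$ contains very few ``bad'' $s$-subsets, where a subset $S' \in \binom{V(G)}{s}$ is \emph{bad} if $|N(S')| < n^{0.9}$.

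First I would lower-bound the expected size of $A$. Since a vertex $x$ lies in $A$ iff every $v_i \in N(x)$, we have
\[
\E|A| \;=\; \sum_{x\in V(G)} \left(\tfrac{d(x)}{n}\right)^t \;\geq\; \sum_{x\in H}\left(\tfrac{d(x)}{n}\right)^t \;\geq\; |H|\cdot n^{-t/(100s)} \;\geq\; \sqrt{n}\cdot n^{-1/10} \;=\; n^{2/5}.
\]
Next I would bound the expected number of bad $s$-subsets contained in $A$: for a fixed such $S'$, the probability $S' \subset A$ equals $(|N(S')|/n)^t < n^{-t/10}$, so this expectation is at most $\binom{n}{s}\cdot n^{-t/10} \leq n^{s - t/10} = 1$.

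By linearity, there is a choice of $v_1,\dots,v_t$ for which $|A|$ minus the number of bad $s$-subsets of $A$ is at least $n^{2/5} - 1$. Fixing such a choice, I would delete one vertex from each bad $s$-subset of $A$ to obtain a set $S$ of size at least $n^{2/5} - 1 \geq n^{1/3}-1$ (for $n$ large) in which no $s$-subset is bad, i.e.\ every $s$-subset has common neighborhood of size $\geq n^{0.9}$.

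The only real obstacle is calibrating $t$: $\E|A|$ shrinks with $t$ while the bad-set count also shrinks with $t$, so $t$ must be chosen so that both $\E|A| \geq n^{1/3}$ and the expected number of bad subsets is $O(1)$. The choice $t=10s$ sits comfortably in the admissible window (roughly $10s \leq t \leq 16s$), and the $n^{1-1/(100s)}$ degree threshold in the hypothesis is precisely what makes this window nonempty.
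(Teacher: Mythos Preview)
Your proposal is correct and essentially identical to the paper's proof: both pick $t=10s$ random vertices, take $A$ to be their common neighbourhood, bound $\E|A|\ge \sqrt{n}\cdot n^{-1/10}$ and the expected number of bad $s$-tuples by $n^s(n^{-0.1})^{10s}=1$, then delete one vertex per bad tuple. The only cosmetic differences are that the paper works with ordered $s$-tuples rather than $s$-subsets and records the slightly looser bound $\E|A|\ge n^{1/3}$ in place of your $n^{2/5}$.
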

\begin{proof}
    Let $S_* := \{x\in V(G): d(x)\ge n^{1-1/100s}\}$. Now pick vertices $v_1,\dots,v_{10s}\sim V(G)$ uniformly at random (independently).

    Take $S_0 := \bigcap_{i=1}^{10s} N(v_i)$. For $x\in S_*$, we have that $\P(x\in S_0)= (d(x)/n)^{10s}\ge n^{-1/10}$, thus $\E[|S_0|] \ge |S_*|n^{-1/10}\ge n^{1/3}$. Meanwhile, we have that \[\E[\#(x_1,\dots,x_s\in S_0^s: |N(x_1)\cap \dots \cap N(x_s)|\le n^{0.9})] \le n^s(n^{0.9}/n)^{10s} = 1.\]Deleting a vertex from such bad $s$-tuple gives the result.
\end{proof}
\noindent Putting these together yields the following supersaturation result.
\begin{corollary}\label{supersat}
    Let $t\ge s \ge 2$. For all sufficiently large $k$ with respect to $s,t$ the following holds.
    Let $G$ be a graph on $n$ vertices which is $K_{s,t}$-free and contains no $K_k$. Then, $G$ has at least $\Omega_s(n^s)$ independent sets of size $s$, provided $n\ge k^{100t}$.

\end{corollary}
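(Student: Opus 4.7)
My plan is a dichotomy on how many vertices of $G$ have large degree, with threshold $n^{1-1/100s}$ matching the hypothesis of Lemma~\ref{DRC}.

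First, suppose at least $\sqrt{n}$ vertices of $G$ have degree at least $n^{1-1/100s}$. Then Lemma~\ref{DRC} supplies a set $S\subseteq V(G)$ of size $n^{1/3}-1$ in which every $s$-subset has a common neighborhood of size at least $n^{0.9}$. I claim this case actually cannot occur. Since $n\ge k^{100t}$, we have $n^{0.9}$ much larger than $R(k,t)\le (2k)^t$, so Ramsey's theorem applied inside each such common neighborhood produces an independent $t$-set (using that $G$ is $K_k$-free). It follows that no $s$-subset of $S$ can itself be independent, as otherwise it would form an induced $K_{s,t}$ together with an independent $t$-set in its common neighborhood. Hence $S$ contains neither an induced $K_k$ (inherited from $G$) nor an independent $s$-set, so Ramsey forces $|S|\le R(s,k)\le (2k)^s$; but $|S|\ge n^{1/3}-1$ is far larger, a contradiction.

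In the complementary case, fewer than $\sqrt{n}$ vertices exceed the degree threshold. Let $L$ be the remaining ``low-degree'' set, so $|L|\ge n/2$ and $e(G[L])\le |L|\cdot n^{1-1/100s}/2\le n^{2-1/100s}$. A standard union bound shows that at most $e(G[L])\cdot\binom{|L|-2}{s-2}=O_s(n^{s-1/100s})$ of the $\binom{|L|}{s}=\Theta_s(n^s)$ $s$-subsets of $L$ contain any edge, leaving $\Omega_s(n^s)$ independent $s$-subsets, as required.

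The only delicate point is the arithmetic in the first case: I need the hypothesis on $n$ to ensure simultaneously that $n^{0.9}\gg R(k,t)$ (so I can pull out the $t$-independent set) and that $n^{1/3}\gg R(s,k)$ (so I can derive the contradiction from $S$). Both are comfortably implied by $n\ge k^{100t}$ together with $t\ge s\ge 2$, which is why the exponent ``$100t$'' appears in the statement.
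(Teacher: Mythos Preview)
Your proof is correct and follows essentially the same approach as the paper: a sparse/dense dichotomy (the paper thresholds on global edge density $e(G)/\binom{n}{2}\le 1/s^2$ rather than on the count of high-degree vertices, but this is equivalent here), with the sparse case handled by a first-moment count and the dense case by Lemma~\ref{DRC} plus two applications of Ramsey. The only cosmetic difference is that in the dense case the paper applies Ramsey directly inside $S$ to extract an independent $s$-set and then builds the forbidden $K_{s,t}$, whereas you phrase the same step as a contradiction on $|S|$.
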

    \begin{proof}
        If $\frac{e(G)}{\binom{n}{2}}\le 1/s^2$, then sampling a $s$-subset $S\in \binom{V(G)}{s}$ uniformly at random, we will have that \[\E[e(G[S])] \le \frac{\binom{s}{2}}{s^2}<1/2.\]Thus, with probability $\ge 1/2$, $S$ is an independent set. This implies that there are at least $\frac{1}{2}\binom{n}{s}\ge \frac{n^s}{3(s!)}$ independent sets (assuming $n$ is sufficiently large).

        Now suppose this was not the case. Then, for sufficiently large $n$, we must have that $e(G)\ge 2n^{2-1/100s}$. This easily implies that $\#(x\in V(G): d(x)\ge n^{1-1/100s})\ge \sqrt{n}$. Applying Lemma~\ref{DRC}, we can find $S\subset V(G)$ with $|S|\ge n^{1/4}$, so that every $s$-subset $S'\subset S$ has a common neighborhood of size at least $ \sqrt{n}$. 

        Meanwhile, by Ramsey's Theorem, and the assumption $n\ge k^{100t}$, we have that every set of $n^{1/4}$ vertices either has a $k$-clique or independent set of size $t$. Thus, we can find an independent set $I\subset S$ of size $s\le t$ (since $G$ is assumed to be $K_k$-free). Then, $|N(I)|\ge \sqrt{n}$, so we can find an independent set $J\subset N(I)$ of size $t$. But then $G[I\cup J] \cong K_{s,t}$, contradicting the assumption that $G$ was $K_{s,t}$-free.
    \end{proof}
    We can also deduce the following handy result.
\begin{lemma}\label{too dense theta}
    Fix $t\ge s\ge 2$, and let $k$ be sufficiently large. 

    Suppose $G$ is an $n$-vertex graph with $d(G)\ge n^{1-1/(100s)}$ and $n\ge k^{100t}$. Then $G$ must contain either a $K_k$ or an induced $K_{s,t}$.  
\end{lemma}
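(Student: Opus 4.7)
The plan is to follow the high-density case of the proof of Corollary~\ref{supersat}, but output the induced $K_{s,t}$ rather than derive a contradiction from its absence.

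First, I would verify that Lemma~\ref{DRC} (possibly with harmless adjustments of the implicit constants) applies. The hypothesis $d(G)\ge n^{1-1/(100s)}$ gives $\sum_{x} d(x)\ge n^{2-1/(100s)}$, so a standard averaging argument yields at least $\sqrt{n}$ vertices of degree at least $n^{1-1/(100s)}/4$. Running the dependent-random-choice step on these vertices (sample $10s$ vertices uniformly at random and intersect their neighborhoods) then produces a set $S\subseteq V(G)$ with $|S|\ge n^{1/3}-1$ such that $|N(S')|\ge n^{0.9}$ for every $s$-subset $S'\subseteq S$. The loss of a constant factor $4^{-10s}$ in the expected size of $|S_0|$ is easily absorbed by the assumption $n\ge k^{100t}$ for $k$ sufficiently large.

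Now apply Ramsey's theorem twice. Since $|S|\ge n^{1/3}-1\ge k^{33t}$, which exceeds $R(k,s)$ for large $k$, either $S$ already contains a $K_k$ (and we are done) or an independent set $I\subseteq S$ with $|I|=s$. In the latter case, the common neighborhood $N(I)$ has size at least $n^{0.9}\ge k^{90t}\ge R(k,t)$, so a second Ramsey application inside $N(I)$ yields either a $K_k$ (again done) or an independent set $J\subseteq N(I)$ with $|J|=t$. Finally, $G[I\cup J]$ is an induced copy of $K_{s,t}$: the sets $I$ and $J$ are disjoint (since $I$ is independent, no $v\in I$ is adjacent to every other vertex of $I$, and so $v\notin N(I)$), each of $I,J$ is independent, and every vertex of $J$ is joined to every vertex of $I$ by the definition of $N(I)$.

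There is no substantive obstacle here: the entire argument is essentially built into the proof of Corollary~\ref{supersat}. The only point requiring any thought is step one, namely checking that the slightly weaker hypothesis $d(G)\ge n^{1-1/(100s)}$ (compared with the edge-count bound $e(G)\ge 2n^{2-1/(100s)}$ used in the dense case of Corollary~\ref{supersat}) still produces $\sqrt{n}$ vertices of near-linear degree, and that the resulting factor-of-$4^{10s}$ loss in the DRC calculation is dominated by $n\ge k^{100t}$ for large $k$.
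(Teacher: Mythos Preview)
Your proposal is correct and follows essentially the same approach as the paper, which explicitly says the lemma ``follows from the second case in our proof of Corollary~\ref{supersat}, we omit the details.'' You have correctly identified and handled the one genuine detail to check: the hypothesis $d(G)\ge n^{1-1/(100s)}$ only gives $e(G)\ge \tfrac12 n^{2-1/(100s)}$ rather than the $2n^{2-1/(100s)}$ used in Corollary~\ref{supersat}, so the degree threshold must be lowered by a constant factor and the resulting $4^{-10s}$ loss absorbed by taking $k$ (hence $n$) large.
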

\noindent The above follows from the second case in our proof of Corollary~\ref{supersat}, we omit the details.

A classical result of Bollob\'as and Thomason~\cite{bollobas} and independently shown by K\'omlos and Szemer\'edi~\cite{komlos} gives the correct bounds for the extremal numbers of a subdivision of a complete graph. 
\begin{theorem}\label{BTsubd}
    Let $G$ be a graph with average degree at least $100k^2$ then $G$ contains a subdivision of a complete graph on $k$ vertices. 
\end{theorem}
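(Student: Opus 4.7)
The plan is to produce the $K_k$-subdivision via a two-step reduction: first find a highly-connected subgraph inside $G$, then exploit its connectivity to link up branch vertices by internally disjoint paths.

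First I would apply a classical theorem of Mader: every graph of average degree at least $c\ell$ contains an $\ell$-connected subgraph, for some small absolute constant $c$ (one may take $c=4$ using the standard extremal-edge reformulation). Setting $\ell := 10\binom{k}{2}$ and using $d(G) \geq 100k^2 \geq c\ell$, this yields a subgraph $H \subseteq G$ which is $10\binom{k}{2}$-connected. If desired, a preliminary cleaning step (iteratively deleting vertices of degree less than $d(G)/2$) can first be used to pass to a subgraph of minimum degree $\geq 50k^2$ without decreasing the average degree, which simplifies bookkeeping when applying Mader.

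Next I would invoke the linkage theorem of Thomas and Wollan, which says that every $10t$-connected graph is $t$-linked: for any choice of $2t$ distinct terminal vertices organized into $t$ source-target pairs, there exist internally disjoint paths realizing each pair. Applied with $t=\binom{k}{2}$, I pick $k$ arbitrary branch vertices $v_1,\dots,v_k$ in $H$, select $k-1$ distinct neighbors of each $v_i$ in $H$ (possible since $\delta(H) \geq 10\binom{k}{2} \geq k-1$), and pair the resulting $k(k-1)$ endpoints so that for each edge $\{i,j\}$ of $K_k$ we have a designated pair consisting of a neighbor of $v_i$ and a neighbor of $v_j$. Linkedness then furnishes $\binom{k}{2}$ internally disjoint paths between these pairs; appending the two incident branch edges at the ends converts them into $\binom{k}{2}$ internally disjoint $v_i$--$v_j$ paths, which collectively form a subdivision of $K_k$ with $v_1,\dots,v_k$ as branch vertices.

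The main obstacle is matching the clean constant $100k^2$ rather than some unspecified $Ck^2$; this depends on the exact constants in the versions of Mader's theorem and the Thomas--Wollan linkage bound being invoked, and one would need to be careful to optimize both. An alternative self-contained route would follow the expansion-based proof of Koml\'os and Szemer\'edi: after cleaning to minimum degree $\Omega(k^2)$, one greedily builds the $\binom{k}{2}$ paths one at a time, exploiting the fact that the remaining graph still expands enough to connect prescribed pairs by short paths without using already-selected vertices. The delicate point in this inductive construction is showing that the minimum degree/expansion is robust enough to survive all $\binom{k}{2}$ rounds of path removal, which is exactly where the factor $k^2$ in the average-degree hypothesis gets used.
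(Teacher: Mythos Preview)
The paper does not prove this statement: Theorem~\ref{BTsubd} is quoted as a classical result of Bollob\'as--Thomason and of Koml\'os--Szemer\'edi, with citations, and is used as a black box. So there is no ``paper's own proof'' to compare against.

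As a standalone argument, your Mader~$+$~Thomas--Wollan route is sound and the constants do fit under $100k^2$: Mader gives an $\ell$-connected subgraph once $d(G)\ge 4\ell$, and taking $\ell$ slightly above $10\binom{k}{2}$ leaves plenty of room. One small gap worth patching: after choosing the branch vertices $v_1,\dots,v_k$ and their designated neighbours, the $\binom{k}{2}$ linkage paths are only guaranteed to be pairwise vertex-disjoint, not to avoid the $v_i$'s internally. The standard fix is to delete $\{v_1,\dots,v_k\}$ first (losing only $k$ in connectivity) and apply linkedness in the remaining graph; you should also say explicitly that the $k(k-1)$ chosen neighbours are selected to be pairwise distinct and disjoint from $\{v_1,\dots,v_k\}$, which the minimum degree easily allows. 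Note too that this argument is anachronistic relative to the original references (Thomas--Wollan postdates both), so if historical fidelity matters you would instead follow the expansion/greedy-linking approach you sketch in your final paragraph.
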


We shall use a recent result of Gil Fern\'andez, Hyde, Liu, Pikhurko, and Wu~\cite{QuantSubbalan} which is a quantitative strengthening of a breakthrough result of Liu and Montgomery~\cite{LiuMont}.  
\begin{theorem}\label{balanced subdivision}
    Let $G$ be a graph with average degree $d$. Then, $G$ contains a balanced subdivision of $K_h$, where $h = \Omega(\sqrt{d})$. 
\end{theorem}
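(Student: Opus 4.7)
The plan is to extend the classical Bollob\'as--Thomason / Koml\'os--Szemer\'edi result (Theorem~\ref{BTsubd}), which already yields an (unbalanced) subdivision of $K_h$ with $h=\Omega(\sqrt{d})$, by forcing every subdivided path to have a common length $\ell$. The natural framework is the sublinear expander method of Koml\'os--Szemer\'edi, enhanced by the length-control machinery introduced by Liu--Montgomery.

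First I would pass to a subgraph $H\subseteq G$ of minimum degree $\Omega(d)$ that is moreover a sublinear expander: for some $\eps(x)=\Theta(1/\log^{2}(x+1))$ and an appropriate size threshold, every $X\subseteq V(H)$ of moderate size satisfies $|N(X)\setminus X|\ge \eps(|X|)\cdot|X|$. This reduction follows from the Koml\'os--Szemer\'edi lemma combined with a standard average-degree cleaning step. In such a graph, any two vertices with sufficiently large ``local'' neighborhoods can be joined by a path of length at most $L=O(\log^{2}|V(H)|)$, with enough flexibility to prescribe further structure along the way.

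Second, I would fix a set of candidate branch vertices $v_1,\dots,v_h$ with $h=\Omega(\sqrt{d})$ and, around each, pairwise disjoint ``expansion seeds'' that act as entry points for paths and retain good expansion throughout the construction. Then, processing the $\binom{h}{2}$ pairs one at a time, I would build an internally disjoint $v_iv_j$-path of length exactly $\ell$ using two ingredients: (i) short-path connectivity inside the expander, which yields a connecting path of some length at most $L$; and (ii) preconstructed \emph{unit adjusters}, small gadgets between two anchor vertices in which the connecting path can be rerouted so as to change its length by exactly $1$. Routing the interim short path through the appropriate number of adjusters then forces the final length to be the common target $\ell$.

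The main obstacle, and the technical heart of the argument, is twofold. The first difficulty is the construction of the adjusters inside the sublinear expander: near a prescribed anchor one must locate two short cycles whose lengths differ by $1$ (or an equivalent parity/length gadget), which is done by a delicate BFS argument exploiting expansion on both sides of a carefully chosen edge. The second difficulty is accounting: one must ensure that after deleting the $O(h^{2}\ell)$ vertices consumed by previously built paths and adjusters, the residual graph is still a sublinear expander with essentially the same parameters. Since $h\ell=\sqrt{d}\cdot\mathrm{polylog}(|V(H)|)$ is much smaller than $|V(H)|$, the robustness of Koml\'os--Szemer\'edi expansion under the removal of small vertex sets makes this quantitative trade-off feasible, and careful optimization of the parameters is what yields the clean bound $h=\Omega(\sqrt{d})$.
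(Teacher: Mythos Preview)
The paper does not prove this statement: Theorem~\ref{balanced subdivision} is quoted without proof as a result of Gil Fern\'andez, Hyde, Liu, Pikhurko, and Wu (building on Liu--Montgomery), and is used purely as a black box. So there is no ``paper's own proof'' to compare your proposal against.

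That said, your outline is broadly the right shape for how the cited references proceed: pass to a Koml\'os--Szemer\'edi sublinear expander, set up branch vertices with disjoint local ``seeds'', and connect pairs by paths of a prescribed common length using Liu--Montgomery style adjusters, all while exploiting robustness of the expansion under deletion of the previously used vertices. Two caveats are worth flagging. First, the adjuster construction in Liu--Montgomery is not literally ``two short cycles whose lengths differ by $1$''; the actual gadgets are built from a pair of paths between two anchors whose lengths differ by $1$, assembled via careful expansion arguments, and chaining many such units is what gives arbitrary length control within a polylogarithmic window. Second, your accounting sentence is off: with $h=\Theta(\sqrt{d})$ pairs and path length $\ell=\mathrm{polylog}(n)$, the number of consumed vertices is $\Theta(h^{2}\ell)=\Theta(d)\cdot\mathrm{polylog}(n)$, not $\sqrt{d}\cdot\mathrm{polylog}(n)$; one still needs the dense-versus-sparse case split (or an equivalent device) to ensure this is negligible compared to the host expander, and it is precisely the careful parameter optimization in the Gil Fern\'andez et al.\ paper that pushes the bound all the way to $h=\Omega(\sqrt{d})$. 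For the purposes of the present paper none of this matters, since the theorem is simply cited.
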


\hide{\section{Outline}
\subsection{Structural assumptions}

\subsection{Rough strategy}

Suppose we are given some graph $G$ with average degree at least $d$. To prove Theorems~\ref{thm: main1} and \ref{thm:main2}, we may assume that $G$ that has certain structural properties (namely, it belongs to a certain hereditary family $\FF$), and wish to deduce that $G$ must have a $K_{s,s}$-subgraph for an appropriately large value of $s$. The story for Theorem~\ref{thm: main} is similar, only now we want to find one of two large things. Each of our arguments breaks into two parts. 

In the first phase, we make no additional assumptions about $G$, and iteratively ``clean'' it. We initialize with $G_0 := G$, and pass to induced subgraphs 
\[G_0\supset G_1 \supset \dots G_\tau\]where $d(G_{t+1})\ge d(G_t)^{\Omega(1)}$ for $t<\tau$, and $\tau$ is some stopping time bounded by $3$.

Next, we have a ``clean'' graph $G^*:= G_\tau$, which falls into one of several structured classes. Here, we finally use some assumptions about $G$. Namely, we can now assume that $G^*$ does not contain a $K_{s,s}$-subgraph (or, a $K_k$, and potentially that $G^*$ does not contain some speicifc subgraph (

\subsection{Some details about cleaning}
\zh{not finished yet, feel free to give an attempt. or I can try later.}}

\section{Auxiliary cleaning lemmas }

\subsection{Key dichotomy}
We require a lemma from \cite{JS}. We recreate their proof for completeness.

\begin{definition} We say a bipartite graph $\Gamma = (A,B,E)$ is {\em $L$-almost-biregular} if: $d_\Gamma(a)\le L|E|/|A|$ for all $a\in A$,  and $d_\Gamma(b)\le L|E|/|B|$ for all $b\in B$.
\end{definition}
\begin{lemma}[{\cite[Lemma~3.7]{JS}}]\label{biregular to regular} Let $\Gamma = (A,B,E)$ be $L$-almost-biregular. Then $\Gamma$ has an induced subgraph $\Gamma'$ with with $d(\Gamma')\ge d(\Gamma)/4$ and $\Delta(\Gamma')\le 24Ld(\Gamma')$.
\end{lemma}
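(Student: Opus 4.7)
My plan is to argue by cases based on how balanced the two sides of $\Gamma$ are. Assume without loss of generality that $|A| \le |B|$, and write $a := |A|$, $b := |B|$, $e := |E|$, and $d := d(\Gamma) = 2e/(a+b)$. By almost-biregularity, every vertex of $\Gamma$ has degree at most $Le/a$. In the \emph{balanced} regime $b \le 47a$, one has $\Delta(\Gamma) \le Le/a \le 48Le/(a+b) = 24Ld$, so $\Gamma' := \Gamma$ works immediately.

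In the \emph{unbalanced} regime $b > 47a$, my plan is to shrink $B$ to a random subset $B'$ of roughly size $a$, and then delete the high-degree vertices that remain in $A$. Concretely, include each $b \in B$ in $B'$ independently with probability $p = \Theta(a/b)$, and set $\Gamma_1 := \Gamma[A \cup B']$. I would combine (i) Markov's inequality on $|B'|$, (ii) the Paley--Zygmund inequality applied to $|E(\Gamma_1)| = \sum_b \mathbf{1}[b \in B']\, d_\Gamma(b)$ (a sum of independent contributions whose variance is at most $pLe^2/b$), and (iii) Markov's inequality on $\sum_a d_{\Gamma_1}(a)^2/T$, for a threshold $T = \Theta(Le/b)$ separating the ``bad'' $A$-vertices (via the second-moment bound $\sum_a \E[d_{\Gamma_1}(a)^2] \le pe + p^2 Le^2/a$), to argue that with positive probability one simultaneously has $|B'| = O(a)$, $|E(\Gamma_1)| \ge pe/2$, and the bad $A$-vertices contributing at most $pe/4$ to $|E(\Gamma_1)|$. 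Setting $A^* := \{a \in A : d_{\Gamma_1}(a) \le T\}$ and $\Gamma' := \Gamma[A^* \cup B']$ then yields $d(\Gamma') = \Omega(d)$ and $\Delta(\Gamma') \le T = O(L) \cdot d(\Gamma')$.

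The main obstacle will be the tuning of constants to achieve the precise bounds $d(\Gamma')/d(\Gamma) \ge 1/4$ and $\Delta(\Gamma')/d(\Gamma') \le 24L$ in the statement; a naive implementation with $p = a/b$ only gives $d(\Gamma') \gtrsim d/12$, so a slightly larger sampling rate together with sharper moment estimates is required. A secondary issue is that the Paley--Zygmund step can degrade when $a$ is small compared to $L$; in that regime I would substitute a deterministic pigeonhole, partitioning $B$ into chunks of size about $a$, selecting the chunk with the most edges, and then separately controlling the high-degree $A$-vertices within the chosen subgraph.
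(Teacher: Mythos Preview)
Your overall strategy---sample $B$ down to size comparable to $A$, then discard $A$-vertices whose degree into the sample is too large---is exactly the right one, and matches the paper. But the execution you outline is more elaborate than necessary, and the two obstacles you flag (constants, and the Paley--Zygmund degradation for small $a$) both stem from splitting the argument into ``edges survive'' and ``bad vertices are few'' as separate events to be controlled simultaneously.

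The paper avoids both issues with a single first-moment computation. With $p = |A|/|B|$ (no case split needed), define $A'$ to be the set of $a \in A$ with $|N(a) \cap B'| \le 1 + 2p(d_\Gamma(a)-1)$, and set $\Gamma' = \Gamma[A',B']$. The maximum degree bound is now deterministic: $\Delta(\Gamma') \le 3L|E|/|B|$. For the edge count, instead of lower-bounding $e(\Gamma[A,B'])$ and then subtracting losses, compute $\E[e(\Gamma')]$ directly edge by edge: for $e = ab \in E$, one has $\P(b \in B') = p$ and, conditioning on $b \in B'$, Markov applied to $|(N(a)\setminus\{b\})\cap B'|$ gives $\P(a \in A' \mid b \in B') > 1/2$. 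Linearity yields $\E[e(\Gamma')] > p|E|/2$, and combining with $\E[|A'|+|B'|] \le 2|A|$ in the single inequality $\E\bigl[4e(\Gamma') - \tfrac{|E|}{|B|}(|A'|+|B'|)\bigr] > 0$ produces an outcome with $d(\Gamma') \ge d(\Gamma)/4$ and $\Delta(\Gamma') \le 24L\,d(\Gamma')$ on the nose. No Paley--Zygmund, no second moments, no balanced/unbalanced split, and no pigeonhole fallback.
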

\begin{proof}
    If $E = \emptyset$, there is nothing to prove (we may take $\Gamma' = \Gamma)$. Supposing otherwise, we now have that $L\frac{|E|}{|A|},L\frac{|E|}{|B|}\ge 1$.
    
    We may assume $|A|\le |B|$. Let $p = |A|/|B|$. We take a random subset $B'\subset B$ by adding each $b\in B$ independently with probability $p$. We take $A'\subset A$ to be the set of $a\in A$ where $|N_\Gamma(v)\cap B'|\le 1+2p(d_\Gamma(v)-1)$. We shall take $\Gamma' = \Gamma[A',B']$, and show this works with positive probability.

    By construction, we have $\Delta(\Gamma')\le 1+2L\frac{|E|}{|B|}\le 3L\frac{|E|}{|B|}$. Indeed, each vertex $a\in A'$ has degree at most $1+2pL|E|/|A|\le 3L |E|/|B|$, and each vertex $b\in B'$ has degree at most $d_\Gamma(b)\le L|E|/|B|$.

    Consider any $e= ab\in E$, and let $U = (N_\Gamma(a)\setminus \{b\}) \cap B'$. Applying Markov's inequality, we see that \[\Bbb{P}(a\in A'|b\in B') = \Bbb{P}(|U| \le 2\E[|U|])>   1/2.\] It follows that 
    \[\E[e(\Gamma')] = \sum_{e=ab\in E} \Bbb{P}(b\in B')\Bbb{P}(a\in A'|b\in B') > p|E|/2.\]

    Also, it is obvious that $\E[|A'|+|B'|] \le |A|+\E[|B'|]=2|A|$. 
    
    Thus, we have 
    \begin{align*}
        \E[4e(\Gamma') - \frac{|E|}{|B|}(|A'|+|B'|)]
        &> 4(p|E|/2) - \frac{|E|}{|B|}2|A| =0.\\
    \end{align*}Consequently, we can choose $A',B'$ such that the LHS is positive, and so $\Gamma'$ is non-empty with
    \[
    d(\Gamma')\ge 2(\frac{|E|}{4|B|}) \ge d(\Gamma)/4,
    \] 
    and, as $\Delta(\Gamma')\le 3L|E|/|B|$,
    \[
    12Le(\Gamma')\ge \Delta(\Gamma')(|A'|+|B'|) 
    \]
    which implies that $\Delta(\Gamma')\le 24L d(\Gamma')$ 
    as desired.
\end{proof}

Lemma~\ref{biregular to regular} allows us to `bootstrap' almost-regular graphs as follows.
\begin{lemma}\label{bootstrap almost reg}
    Let $L\ge 2$ be a constant and let $G$ be a $n$-vertex graph with average degree $d(G)\ge d$. Furthermore, suppose that $\Delta(G)\le Ld$. Then $G$ has an induced subgraph $H\subset G$ where $\Delta(H)= O(d(H)\cdot \log^2 (L))$ and $d(H)=\Omega(\frac{d}{\log^2(L)})$.
\end{lemma}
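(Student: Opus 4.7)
The plan is to combine a dyadic degree-class decomposition with a pigeonhole argument, and then invoke Lemma~\ref{biregular to regular} on an appropriate bipartite subgraph. First I would pass to an induced subgraph of minimum degree at least $d/2$ (which preserves the hypotheses up to constant factors) and partition $V(G)$ into $t = O(\log L)$ dyadic degree classes $V_i := \{v : d_G(v) \in [2^{i-1}d,\, 2^i d)\}$. Inside each class the max-to-min degree ratio is at most $2$, which will be the source of the eventual gain. The identity $e(G) = \sum_i e(G[V_i]) + \sum_{i<j} e(V_i, V_j)$ is a sum of $t + \binom{t}{2} = O(\log^2 L)$ nonnegative terms, so by pigeonhole one of them contributes at least $e(G)/O(\log^2 L)$, and I would split into two cases.

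In the \emph{intra-class case}, some $V_k$ satisfies $e(G[V_k]) \ge e(G)/O(\log^2 L)$. Writing $D_k := 2^k d$, the bound $|V_k|\cdot D_k \le 2\sum_{v\in V_k} d_G(v) \le 4e(G) = 2nd$ forces $|V_k| \le 2nd/D_k$, which gives
\[
d(G[V_k]) \;=\; \frac{2e(G[V_k])}{|V_k|} \;\ge\; \Omega\!\left(\frac{D_k}{\log^2 L}\right) \;\ge\; \Omega\!\left(\frac{d}{\log^2 L}\right),
\]
while $\Delta(G[V_k]) \le D_k$ is at most $O(\log^2 L)\cdot d(G[V_k])$. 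Hence $H := G[V_k]$ finishes this case.

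In the \emph{inter-class case}, some pair $V_i, V_j$ satisfies $e(V_i, V_j) \ge e(G)/O(\log^2 L)$. I would consider the bipartite graph $\Gamma := G[V_i, V_j]$: for $v \in V_k$ with $k \in \{i,j\}$ we have $d_\Gamma(v)\le D_k$, and the average on the $V_k$-side is $e(\Gamma)/|V_k|$, so using $|V_k| D_k \le 4e(G)$ the max-to-average ratio on each side is at most $|V_k|D_k/e(\Gamma) = O(\log^2 L)$. Thus $\Gamma$ is $O(\log^2 L)$-almost-biregular, and Lemma~\ref{biregular to regular} yields an induced bipartite $\Gamma' \subseteq \Gamma$ with $d(\Gamma')\ge d(\Gamma)/4 = \Omega(d/\log^2 L)$ and $\Delta(\Gamma') \le O(\log^2 L)\, d(\Gamma')$.

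Finally I would set $H := G[V(\Gamma')]$, an induced subgraph of $G$. Since $H$ contains every edge of $\Gamma'$, $d(H)\ge d(\Gamma')$ is immediate. For the max-degree bound, any $v \in V(\Gamma')\cap V_k$ satisfies $d_H(v) \le d_{\Gamma'}(v) + d_{G[V_k]}(v) \le O(\log^2 L)\, d(\Gamma') + D_k$; the almost-biregularity estimate $D_k \le L'\, e(\Gamma)/|V_k|$, combined with the fact that $d(\Gamma)$ is comparable to $e(\Gamma)$ divided by the \emph{larger} of $|V_i|, |V_j|$, absorbs the $D_k$ term into $O(\log^2 L)\, d(H)$. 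The hard part I expect to be exactly this last verification: when the two classes are very unbalanced (large $|V_j|$ paired with large $D_i$ or vice versa), a vertex on the high-degree side might have many neighbors within its own class, and this intra-class contribution is invisible to $\Gamma'$. In the inter-class case the hypothesis $e(G[V_k]) < e(G)/O(\log^2 L)$ (which holds because the intra-class case did not trigger) gives us a Markov-style bound on how many vertices have unusually high intra-class degree, and removing that small set before taking $H$ should close the gap while only shrinking $d(\Gamma')$ by a constant factor.
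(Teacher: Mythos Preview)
Your intra-class case is clean and correct. The gap is in the inter-class case, precisely at the point you flagged as ``the hard part''.

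Your absorption claim ``$D_k \le L'\, e(\Gamma)/|V_k|$ combined with $d(\Gamma) \approx e(\Gamma)/\max(|V_i|,|V_j|)$ absorbs $D_k$'' only works when $V_k$ is the \emph{larger} part. On the smaller side, $e(\Gamma)/|V_k|$ can exceed $d(\Gamma)$ by an arbitrary factor, so $D_k$ is not controlled. Your proposed Markov repair also fails: knowing $e(G[V_k]) < e(G)/O(\log^2 L)$ tells you there are at most $2e(G[V_k])/T$ vertices of intra-class degree $>T$, but each such vertex can carry up to $D_k$ edges of $\Gamma$ (or up to $\Delta(\Gamma')$ edges of $\Gamma'$), and the product can swamp $e(\Gamma')$. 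Concretely, for the removed edges to be at most a constant fraction of $e(\Gamma)$ you need $T \gtrsim e(G[V_k])\,D_k/e(\Gamma)$; with only $e(G[V_k]) \lesssim e(\Gamma)$ this forces $T \gtrsim D_k$, which can be as large as $Ld$ --- useless. You never invoked $d$-degeneracy (passing to minimum degree $d/2$ does not give it), and without it there is no way to bound $e(G[V_k])$ by $d|V_k|$, which is what would be needed to push $T$ down.

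The paper's proof avoids this trap by a different architecture: it passes to a $d$-degenerate subgraph, pigeonholes \emph{once} to find a single class $V_i$ incident to $\Omega(nd/\log L)$ edges, takes a random half $F\subset V_i$, and then uses degeneracy to thin $F$ to $F''$ with $\Delta(G[F'']) \le 4d$ --- crucially, because all degrees in $V_i$ lie in a dyadic window, deleting half of $F$ costs only a constant fraction of the outgoing edges. Only \emph{after} this cleaning does it pigeonhole a second time, over dyadic classes $U_j$ of $V\setminus F''$ according to degree into $F''$, and then cleans $U_j$ the same way. The sequential ``pigeonhole, clean, pigeonhole, clean'' structure is what lets degeneracy control the intra-part degrees on both sides by $O(d)$ before Lemma~\ref{biregular to regular} is applied; your simultaneous pigeonhole over all $\binom{t}{2}$ pairs does not leave room for this.
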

\begin{remark}
    We may and will assume $L$ is large enough so that $2\log(L) \ge \log(L)+\log\log(L)+7$ (recall that all logarithms are in base $2$).
    This is simply a straightforward modification of an argument that appeared in the proof of \cite[Lemma~3.4]{c4free}. We need a slightly broader range of parameters for the current paper, so we have presented them here. 

    This result will be black-boxed to prove our key ``dichotomy result'' (Lemma~\ref{Lem:dichotomy}), which is a minor generalization of \cite[Lemma~3.4]{c4free}. Slightly sharper versions of Lemma~\ref{bootstrap almost reg} may be possible by using more involved choices of parameters, in which case we hope the more streamlined presentation will be helpful in future applications.
\end{remark}

\begin{proof}
    First, we may and will assume $\delta(G)\geq d/2$ and that $G$ is $d$-degenerate, otherwise we may pass to a subgraph with higher average degree. Write $V:= V(G)$.

    We now split the vertices in $V$ into $V_i=\{x\in V: 2^{i-2}d\leq d(x)<2^{i-1}d \}$, for $i\in [\log(L)+1] $. 
    By pigeonhole principle, there is some $i\in [\log(L)+1]$, for which \[\sum_{x\in V_i} d(x)\geq \frac{dn}{\log(L)+1}\geq \frac{dn}{2\log(L)}.\] We fix one such $i$.

    For technical reasons, we shall take a random subset of $V_i$ uniformly at random. Let $F\subset V_i$ be such a random set. 
    Moreover, let $F'\subset F$ be the set of vertices $x\in F$ which send at least $d(x)/2$ edges to $V\setminus F$.
    It is easy to see that $\mathbb{P}[x \in F']\geq \mathbb{P}[x\in F]/2= \frac{1}{4}$. Hence, we may fix an outcome where $|F'| \ge \mathbb{E}[|F'|]\geq |V_i|/4$ and therefore $e(F', V \setminus F')\geq \frac{\sum_{x\in V_i} d(x)}{2\cdot 4}\geq \frac{nd}{16\log(L)}$. 
    
    Now, because $G$ is $d$-degenerate, we can find some $F''\subset F'$ such that $\Delta(G[F'']) \le 4d$ and $|F''|\ge |F'|/2$ (indeed, $G[F'']$ must have at most $d|F'|$ edges, thus $|\{x\in F':|N(x)\cap F'|>4d\}|\le |F'|/2$). We note that \[e(F'',V\setminus F'') \ge \frac{e(F',V\setminus F')}{6}\ge\frac{nd}{100\log(L)}\] since $6|N(x)\cap (V\setminus F)|\ge d(x)+2^{i-1} \ge d(x)+d(x')$ for any $x,x'\in F'$.
    Finally, we can move on. We now shall consider a partition of $V\setminus F''$ according to the degree of these vertices into $F''$. 
    
    Specifically, let $U_j=\{x\in V \setminus F'': \frac{d2^{j-1}}{100\log(L)}\leq d_{F''}(x)<\frac{d2^{j}}{100\log(L)}\}$ for $j=0,\dots, \ell:= \log(L)+\log\log(L)+7$. Note this might not be a partition of all the vertices in $V\setminus F''$, however all but at most $\frac{nd}{200\log(L)}$ edges belong to $G[F'',\bigcup_{j=0}^{\ell}U_j]$. 
    Again by pigeonhole principle there some $j\in \{0,1,\dots,\ell\}$, for which $e(G[F'',U_j])\geq \frac{e(F'',\bigcup_{j=0}^{\ell} U_j)}{2\log(L)}\geq \frac{nd}{200\log^2(L)}$. 
    By $d$-degeneracy, there are at most $|U_j|/2$ vertices in $U_j$ which send more than $4d$ edges inside $U_j$. Jettison those vertices and denote the leftover vertices by $U_j'$. This leaves us with $e(G[F'',U_j']) \ge e(G[F',U_j])/3 $.
    
    Now, observe that $G[F'',U'_j]$ is a $M$-almost-regular bipartite graph where $M=1200\log(L)$ since every vertex in $U'_j$ sends roughly (up to a factor of two) the same number of edges to $F''$ and the maximum degree of vertices in $F''$ is at most $2^{i+1}d$ and $e(G[F'',U'_j])\geq \frac{2^{i}d|F''|}{200 \cdot 3\log(L)}$. We may now apply Lemma~\ref{biregular to regular} to $G[F',U'_j]$ with $L:=M$, to find an induced subgraph $H\subset G[F'',U'_j]$ with $d(H)\geq d(G[F',U'_j])/4$ and $\Delta(H)=O(\log^2(L)d(H))$ and so $H$ is the required induced subgraph. There might be some vertices in $H$ which send $4d$ edges within $V(H)\cap U'_j$ or within $V(H)\cap F'$ but this is fine as $d(H)=\Omega(\frac{d}{\log(L)^2})$. 
\end{proof}

The proof of our main result shall be split into two cases. Either we can find an induced subgraph of $G$ which is almost regular and which still has many edges or we can pass to an induced bipartite subgraph $H$ consisting of two parts $A$ and $B$ where $|A|\ggg |B|$ also preserving many edges. This dichotomy is shown in the next lemma.

\begin{lemma}\label{Lem:dichotomy}
    Let $L, d\geq 16$ be positive integers. 

    Consider an $n$-vertex graph $G$ with average degree $d(G)\ge d$, which is $d$-degenerate. Then, one of the following holds. 
    \begin{enumerate}
        \item There is a partition of $V(G)=A\cup B$ where $|A|\geq \frac{L|B|}{2}$ and $e(G[A,B])\geq \frac{nd}{8}$; 
        \item There is an induced subgraph $H\subset G$ where $\Delta(H)= O(d(G[H])\cdot \log^2(L))$ and $d(G[H])=\Omega(\frac{d}{\log^2(L)})$.
    \end{enumerate}
    
\end{lemma}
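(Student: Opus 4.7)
The plan is to perform a case split based on the set of high-degree vertices. Define
\[B := \{v\in V(G): d_G(v) \ge 2Ld\} \quad \text{and} \quad A := V(G)\setminus B.\]
Since $G$ is $d$-degenerate we have $\sum_{v\in V(G)} d_G(v) = 2e(G) \le 2dn$, so $2Ld\cdot |B| \le 2dn$, giving $|B|\le n/L$. Consequently $|A|\ge n(1-1/L)$, and a direct computation yields $|A|/|B| \ge L-1 \ge L/2$ (using $L\ge 16$).

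Now I consider whether many edges cross between $A$ and $B$. If $e(G[A,B])\ge nd/8$, then the partition $(A,B)$ witnesses conclusion (1), and we are done.

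Otherwise, we are in the case $e(G[A,B]) < nd/8$. Since $G[B]$ is a subgraph of a $d$-degenerate graph, $e(G[B])\le d|B|\le dn/L \le dn/16$, while $e(G)\ge nd/2$. Hence
\[e(G[A]) = e(G) - e(G[A,B]) - e(G[B]) \ge \tfrac{nd}{2}-\tfrac{nd}{8}-\tfrac{nd}{16} \ge \tfrac{nd}{4}.\]
In particular $d(G[A]) \ge 2e(G[A])/|A| \ge d/2$, and by the very definition of $A$ every vertex in $G[A]$ has degree at most $2Ld \le 4L\cdot d(G[A])$. Thus $G[A]$ is a graph whose maximum degree is bounded by a constant (namely $4L$) times its average degree, precisely the hypothesis of Lemma~\ref{bootstrap almost reg}. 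Applying that lemma to $G[A]$ with parameter $L':=4L$ and underlying average degree $d/2$ produces an induced subgraph $H\subseteq G[A]\subseteq G$ with $\Delta(H) = O(d(H)\log^2(L')) = O(d(H)\log^2 L)$ and $d(H) = \Omega((d/2)/\log^2(L')) = \Omega(d/\log^2 L)$, which is exactly conclusion (2).

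The main work is already contained in Lemma~\ref{bootstrap almost reg}; the only real content here is the decision of where to threshold the high-degree vertices. The obstacle worth double-checking is that the three quantities $e(G[A,B])$, $e(G[B])$, and $e(G)$ can be balanced so that when case (1) fails, enough of the mass of $G$ survives inside $A$ to give a substantial average degree in $G[A]$; this is what forces the hypothesis $L\ge 16$ (so that $dn/L$ and $dn/8$ together still leave $\ge dn/4$ edges inside $A$). Everything else is bookkeeping.
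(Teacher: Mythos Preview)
Your proof is correct and follows essentially the same approach as the paper: split $V(G)$ into light and heavy vertices according to a threshold proportional to $Ld$, check whether many edges cross (giving outcome (1)), and otherwise observe that most edges lie inside the light part, to which Lemma~\ref{bootstrap almost reg} applies. The only cosmetic difference is your threshold $2Ld$ versus the paper's $Ld$, which merely shifts the intermediate constants and changes the parameter fed into Lemma~\ref{bootstrap almost reg} from $2L$ to $4L$.
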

\begin{proof}
    Let $A_{\text{heavy}}=\{x\in V(G)| d(x)\geq Ld\}$ and\footnote{Perhaps a more apt term is `not-too-abnormally-heavy', rather than `light', but we opt for the latter term for the sake of brevity.} $A_{\text{light}}=V(G)\setminus A_{\text{heavy}}$. Note that by assumption on the degeneracy, we must have that $|A_{\text{heavy}}|\leq 2n/L$. Now first suppose that $e(G[A_{\text{light}},A_{\text{heavy}}]) \geq nd/8$. Then, taking $A:=A_{\text{light}}, B:= A_{\text{heavy}}$ forms the required partition.

    If not, then since $G$ is $d$-degenerate and $|A_{\text{heavy}}|\leq 2n/L$, we have that $e(G[A_{\text{heavy}}])\leq 2nd/L\leq nd/8$. 
    Hence, \[e(G[A_{\text{light}}])= e(G)-e(G[A_{\text{light}},A_{\text{heavy}}])-e(G[A_{\text{heavy}}])\geq n{d}(G)/2-nd/8-nd/8\geq nd/4.\] We are now done by applying Lemma~\ref{bootstrap almost reg} to $G[A_{\text{light}}]$ with $d:= d/2,L:= 2L$.    
\end{proof}

\subsection{The subdichotomy}

Now, in the case where we are almost-regular we show how to further clean our graph. 

\begin{lemma}\label{deletion}
    Fix a graph $F$ and $\eps,\delta>0$ with $\eps<\delta/2$. Suppose that $d$ is sufficiently large with respect to $\eps,\delta$.

    Let $G$ be an $n$-vertex graph with $d(G) \ge d$ and $\Delta(G)\le d^{1+\eps}$. Furthermore, suppose that $G$ has less than $nd^{|V(F)|-1-\delta}$ copies of $F$ (not necessarily induced). Then $G$ contains an induced subgraph $G'$ which does not contain $F$ as a subgraph, such that $d(G')\ge d^{\delta/(10|V(F)|)}$.
\end{lemma}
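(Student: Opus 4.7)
The plan is an alteration argument calibrated to exploit the almost-regularity. Write $v := |V(F)|$ and let $m$ denote the number of (not necessarily induced) copies of $F$ in $G$, so $m < nd^{v-1-\delta}$. I would sample $S\subseteq V(G)$ by retaining each vertex independently with probability $p := c_v\cdot d^{-1+(\delta-\eps)/(v-1)}$, for a small enough constant $c_v=c_v(v)>0$. Then, for every copy $C$ of $F$ surviving in $G[S]$, mark (say) its lex-smallest vertex, producing a deletion set $D\subseteq S$. The induced subgraph $G':=G[S\setminus D]$ is automatically $F$-free, so the task reduces to showing $d(G')\ge d^{\delta/(10v)}$ with positive probability.

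The heart of the proof is bounding the expected number of edges of $G[S]$ incident to $D$. For a fixed copy $C$ and any fixed $u\in C$, conditioning on $C\subseteq S$ only forces the other $v-1$ vertices of $C$ into $S$, while every remaining neighbour of $u$ is in $S$ independently with probability $p$. Consequently $\E[\deg_{G[S]}(u)\mid C\subseteq S]\le (v-1)+p\,\deg_G(u)\le v+p\Delta\le 2p\Delta$, the last step using that $p\Delta = c_v d^{(\delta-\eps)/(v-1)+\eps}\to\infty$ with $d$. Summing over the $m\le nd^{v-1-\delta}$ copies yields
\[
\E\bigl[\#\{e\in E(G[S]):e\cap D\ne\emptyset\}\bigr] \;\le\; 2mp^{v+1}\Delta \;=\; 2c_v^{v-1}\cdot p^2 nd,
\]
which is at most $p^2 nd/8$ once $c_v$ is small enough.

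Combined with $\E[e(G[S])]\ge p^2 nd/2$ and $\E[|S|]=pn$, the quantity
\[
W := e(G[S]) \;-\; \#\{e\in E(G[S]):e\cap D\ne\emptyset\} \;-\; \tfrac{pd}{16}|S|
\]
has $\E[W]\ge p^2 nd\bigl(\tfrac12-\tfrac18-\tfrac1{16}\bigr)>0$. Fixing a realization with $W>0$ gives $e(G')\ge \tfrac{pd}{16}|G'|$ and hence $d(G')>pd/8\ge c_v d^{(\delta-\eps)/(v-1)}/8$. Since $\eps<\delta/2$ and $d$ is large with respect to $v,\eps,\delta$, this exceeds $d^{\delta/(10v)}$, as $\delta/(2(v-1))>\delta/(10v)$ for every $v\ge 2$.

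The main obstacle is the calibration of $p$: naively bounding the edges destroyed per surviving copy by the worst-case degree $\Delta\le d^{1+\eps}$ would force $p<d^{-1}$ and obliterate the average degree in the subsample. The saving comes from noticing that in the random subgraph $G[S]$ a vertex only has expected degree $\approx p\Delta$ (not $\Delta$) even conditional on a specific $F$-copy surviving, which is exactly what lets us take $p=d^{-1+\Omega_v(\delta)}$.
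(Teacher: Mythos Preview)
Your proof is correct and follows essentially the same approach as the paper's: both sample vertices with probability $p=d^{-1+\Theta_v(\delta)}$, delete vertices to destroy every surviving copy of $F$, and use a first-moment comparison between surviving edges and edges lost to deletion. The only cosmetic difference is that the paper deletes \emph{all} vertices of each surviving copy (tracking separately the internal edges and the boundary edges via two terms $Y$ and $Z$), whereas you delete a single vertex per copy and bound the incident edges in one step; your bookkeeping is slightly cleaner and your exponent $(\delta-\eps)/(v-1)$ a touch stronger, but the underlying mechanism is identical.
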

\begin{proof}
    Let $\FF$ denote the set of subgraphs $F'\subset G$ with $F'\cong F$. Also for convenience write $v_0:= |V(F)|$.
    
    Let $\mathcal{S}$ be the set of pairs $(e,F')$ such that $F'\in \mathcal{F}$ and $e\in E(G)$ is an edge with exactly one vertex in $V(F')$. Clearly, we have $|\mathcal{S}|\le v_0d^{1+\eps}|\mathcal{C}|<v_0nd^{v_0-\delta/2}$.

    Now set $\delta' := \frac{\delta}{5v_0}$, and consider a random set $V'\subset V(G)$ where each vertex is included with probability $p := d^{\delta'-1}$ (independently). We let $\FF'$ be the set of $F\in \FF$ where $V(F)\subset V'$. 
        
    Finally we consider \[ X:= \E[e(G[V'])], \quad Y:=\E[|\FF'|],\quad Z:= \E[\#\{(e,F')\in \mathcal{S}:e\in E(G[V'])\text{ and } F'\in \FF'\}] .\] Let $G'$ be the induced subgraph on vertex set $V' \setminus \bigcup_{F\in \FF'}V(F)$. It is clear that $e(G')\ge X-\binom{v_0}{2}Y-Z$, and that $G'$ will not have $F$ as a subgraph.

    Now, we simply observe that
    \[\E[X] \ge n(d/2)p^2= \frac{1}{2}d^{\delta'}(np),\quad \E[Y]=|\FF|p^{v_0}\le d^{v_0\delta'-\delta}(np) \quad {\text{ and }} \quad \E[Z]= |\mathcal{S}|p^{v_0+1}\le v_0d^{(v_0+1)\delta'-\delta/2}(np) .\]Thus (noting that $(v_0+1)\delta'<\delta/3$, and assuming $d$ is sufficiently large), $Y,Z$ become lower order terms and we get
    \[\E[X-\binom{v_0}{2}Y-Z] \ge d^{\delta'/2}(np)= d^{\delta'/2}\E[|V'|],\]whence there is some outcome where $e(G')\ge d^{\delta'/2}|V(G')|$, giving us our desired subgraph.
\end{proof}

\section{Using the subdichotomy}

In this short section, we collect some useful corollaries of our cleaning lemmas. These will allow us to pass to a favorable outcome if the edges are noticeably concentrated in some local area of $G$.

\begin{lemma}\label{Lem: denseorC_4}
Fix $\varepsilon\in (0,1/10)$ and assume $d$ is sufficiently large with respect to $\varepsilon$. 

Let $G$ be an $n$-vertex graph with $d(G)\geq d$ and $\Delta(G)\leq d^{1+\varepsilon}$. Then, we can either find an induced subgraph $G'\subset G$ on $n'$ vertices where $n'\geq d^{1-3/2\eps}$ and with $d(G')\geq (n')^{1-5\eps}$, or an induced subgraph $G''\subset G$ with $d(G'')\geq d^{\varepsilon/20}$ which does not have any $C_4$.
    
\end{lemma}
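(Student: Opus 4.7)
The plan is to run a dichotomy on the number $Z$ of copies of $C_4$ in $G$. Set $\delta := 2\eps + \eps^2$; then $\eps < \delta/2$ (so Lemma~\ref{deletion} applies to $F = C_4$) and $\delta/(10\cdot 4) = \delta/40 > \eps/20$ (matching the second conclusion). If $Z < nd^{3-\delta}$, then Lemma~\ref{deletion} with $F = C_4$ directly produces an induced $C_4$-free subgraph $G''$ with $d(G'') \ge d^{\delta/40} \ge d^{\eps/20}$, which is the second alternative.

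Otherwise $Z \ge nd^{3-\delta}$, and I must produce the dense $G'$. Writing $c_{v,x} := |N(v) \cap N(x)|$, the double-counting identity $\sum_{v \ne x} c_{v,x}^2 = 8Z + 2\sum_{v \ne x} c_{v,x}$, combined with averaging over $v$, produces some vertex $v^*$ with $\sum_{x \ne v^*} c_{v^*,x}^2 \ge 8d^{3-\delta}$. Meanwhile $\sum_x c_{v^*,x} \le d(v^*)\Delta(G) \le d^{2+2\eps}$, and each $c_{v^*,x} \le d(v^*) \le d^{1+\eps}$. A dyadic bucketing of the values $\{c_{v^*,x}\}_x$ into intervals $[2^k, 2^{k+1})$ selects, by pigeonhole, a level $c$ for which $X := \{x : c_{v^*,x} \in [c, 2c)\}$ satisfies $|X| \ge d^{3-\delta}/(c^2 \cdot O(\log d))$; comparing with $c|X| \le d^{2+2\eps}$ then forces $c \ge d^{1-4\eps-\eps^2}/O(\log d)$.

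Define $G' := G[N(v^*) \cup X']$, where $X' = X$ if $|X| \le d^{1+\eps}$ and otherwise $X' \subset X$ is an arbitrary subset of size $d^{1+\eps}$. In either case $n' := |V(G')| \le 2d^{1+\eps}$ and $G'$ contains a bipartite subgraph between $X'$ and $N(v^*)$ with at least $c|X'|$ edges (each $x \in X'$ contributes $\ge c$ edges to $N(v^*)$). A direct edge count yields $d(G') \ge d^{1-4\eps-\eps^2}/O(\log d)$ in both subcases, which beats the target $(n')^{1-5\eps} \le 2d^{(1+\eps)(1-5\eps)} = 2d^{1-4\eps-5\eps^2}$ since $d^{4\eps^2}/\log d \ge 2$ for $d$ large. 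The lower bound $n' \ge d^{1-3\eps/2}$ follows by splitting on the size of $c$: either $c$ is small enough that $|X| \ge d^{1-3\eps/2}$ directly, or $c$ exceeds this threshold and the key observation $c_{v^*,x} \le d(v^*)$ forces $d(v^*) \ge c > d^{1-3\eps/2}$, giving $n' \ge |N(v^*)| \ge d^{1-3\eps/2}$. The main obstacle is the exponent bookkeeping: the slack $\eps^2$ inside $\delta = 2\eps + \eps^2$ and the gap between the $5\eps$ in the target density and the $4\eps$ appearing in $(1+\eps)(1-5\eps) = 1-4\eps-5\eps^2$ are precisely what absorb the $O(\log d)$ and $d^{O(\eps^2)}$ losses from the bucketing, while the trivial bound $c_{v^*,x} \le d(v^*)$ obviates any need for a min-degree preprocessing step on $G$.
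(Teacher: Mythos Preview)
Your proof is correct and follows the same overall dichotomy as the paper: threshold on the number of $C_4$'s, apply Lemma~\ref{deletion} in the sparse case, and in the dense case pass to an induced subgraph living inside neighborhoods. (One tiny slip: the identity should read $\sum_{v\ne x} c_{v,x}^2 = 8Z + \sum_{v\ne x} c_{v,x}$, not $+2\sum$; this is harmless since the linear term is lower order.)

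Where you diverge is only in the dense case. You locate a vertex $v^*$ with large second moment $\sum_x c_{v^*,x}^2$, dyadically bucket the codegrees to pick out a level $c$ and a set $X$, and take $G' = G[N(v^*)\cup X']$, then run a separate case split on $c$ to certify $n'\ge d^{1-3\eps/2}$. The paper instead pigeonholes twice: first to a vertex $x$ lying in $\ge d^{3-2\eps}$ four-cycles, then (using $|N(x)|\le d^{1+\eps}$) to a neighbor $y$ so that the \emph{edge} $xy$ lies in $\ge d^{2-3\eps}$ four-cycles; since each such $C_4$ contributes an edge inside $N(x)\cup N(y)$, one immediately gets $G' = G[N(x)\cup N(y)]$ with $\ge d^{2-3\eps}/2$ edges on $\le 2d^{1+\eps}$ vertices. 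This avoids your $O(\log d)$ bucketing loss entirely (so no need for the extra $\eps^2$ slack in $\delta$), and the lower bound $n'\ge d^{1-3\eps/2}$ falls out for free from $e(G')\le \binom{n'}{2}$ without a further case split. Your route works because the exponent gap $5\eps$ versus $4\eps$ comfortably absorbs the logarithm, but the paper's ``edge in many $C_4$'s'' step is the cleaner way to execute the same idea.
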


\begin{proof}
    Let $\CC_4$ be the set of $4$-cycles $C\subset G$. If $|\CC_4|\le nd^{3-2\eps}$, then we can find $G''$ by applying Lemma~\ref{deletion} (with $F:= C_4$).

    Otherwise, there must be some vertex $x\in V(G)$ which belongs to at least $ d^{3-2\eps}$ cycles of length $4$.
    Next, since $|N(x)| \le d^{1+\eps}$, there must be some $y\in N(x)$ such that $\{x,y\}$ is an edge which belongs to at least $d^{2-3\eps}$ cycles of length $4$. But the number of such cycles exactly counts the number of choices of $x'\in N(x)\setminus \{y\}, y'\in N(y)\setminus \{x\}$ where $\{x',y'\}\in E(G)$. Thus taking $G'= G[N(x)\cup N(y)]$ gives a graph with $\ge d^{2-3\eps}/2$ edges and $m:=|V(G')|\le 2d^{1+\eps}$ vertices. This implies that $d(G')\ge d^{1-4\eps}/4\ge m^{1-5\eps}$ (using that $\eps<1/10$ and $d$ is sufficiently large). 
\end{proof}

\begin{lemma}\label{independent set or dense}
    Fix $\eps\in (0,1/10)$, and suppose $n$ is sufficiently large with respect to $\varepsilon$. 

    Let $G$ be an $n$-vertex graph, without an independent set of size $\sqrt{n}$. Then, $G$ either has an induced subgraph $G'$ on $n'\ge (n')^{1/4}$ vertices where $d(G')\ge (n')^{1-5\eps}$, or an induced subgraph $G''\subset G$ with $d(G'')\ge n^{\eps/100}$ which does not contain any $C_4$.
\end{lemma}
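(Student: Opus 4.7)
The plan is to pass from $G$ to an almost-regular induced subgraph, and then feed that subgraph into Lemma~\ref{Lem: denseorC_4}.

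First I would use the assumption $\alpha(G)<\sqrt{n}$ to lower-bound the average degree of $G$. By the Caro--Wei bound followed by Jensen's inequality,
\[
\sqrt{n} > \alpha(G) \;\ge\; \sum_{v\in V(G)} \frac{1}{d(v)+1} \;\ge\; \frac{n}{d(G)+1},
\]
so $d(G) > \sqrt{n}-1 \ge \sqrt{n}/2$ for $n$ sufficiently large.

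Next, I would apply Lemma~\ref{bootstrap almost reg} with $d := \sqrt{n}/2$ and $L := 2\sqrt{n}$; the trivial bound $\Delta(G)\le n-1 \le Ld$ makes this valid. This produces an induced subgraph $H\subset G$ with $d_*:=d(H)=\Omega(\sqrt{n}/\log^{2}n)$ and $\Delta(H)=O(d_*\log^{2}n)$. For $n$ large we have $d_* \ge (\log n)^{2/\eps}$ with plenty of room to spare, so $\Delta(H)\le d_*^{1+\eps}$, which lets me invoke Lemma~\ref{Lem: denseorC_4} on $H$ with parameter $\eps$.

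The two possible outcomes of Lemma~\ref{Lem: denseorC_4} are exactly the two outcomes we need. In the $C_4$-free case we obtain $G''\subseteq H$ with $d(G'')\ge d_*^{\eps/20}\ge (\sqrt{n}/\log^{2}n)^{\eps/20}\ge n^{\eps/100}$ for $n$ large. In the dense case we obtain $G'\subseteq H$ on $n'\ge d_*^{1-3\eps/2}$ vertices with $d(G')\ge (n')^{1-5\eps}$, and since $\eps<1/10$ and $d_*\ge \sqrt{n}/\log^{2}n$ we get $n'\ge (\sqrt{n}/\log^{2}n)^{1-3\eps/2}\ge n^{1/4}$ once $n$ is sufficiently large.

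There is no conceptual obstacle; the only work is bookkeeping. The main thing to verify is that the polylogarithmic losses suffered in the bootstrap step are absorbed by the exponent slack between the hypothesis and conclusion of Lemma~\ref{Lem: denseorC_4} (i.e.\ $\eps/100$ versus $\eps/20$ on the $C_4$-free side, and $1/4$ versus $(1-3\eps/2)/2$ on the dense side). This is exactly what forces the choice $L=2\sqrt{n}$ in Lemma~\ref{bootstrap almost reg}: any larger $L$ would make the $\log^{2}L$ factor too costly, while any smaller $L$ would fail the hypothesis $\Delta(G)\le Ld$.
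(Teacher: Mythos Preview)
Your proof is correct and follows essentially the same route as the paper: bound $d(G)\ge\sqrt{n}/2$ from the independence number, pass to an almost-regular induced subgraph, then feed it into Lemma~\ref{Lem: denseorC_4}. The only cosmetic differences are that the paper cites Tur\'an rather than Caro--Wei, and that the paper invokes the dichotomy Lemma~\ref{Lem:dichotomy} with $L=2^{d_0^{\eps/12}}$ chosen so large that the unbalanced-partition outcome is vacuously impossible (since $L/2>n$), whereas you go straight to Lemma~\ref{bootstrap almost reg} with the minimal $L=2\sqrt{n}$; your route is arguably a hair cleaner since it avoids passing through a two-case statement only to kill one case.

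One small remark: your closing sentence overstates the constraint on $L$. It is true that you need $Ld\ge \Delta(G)$, which forces $L\gtrsim\sqrt{n}$, but there is plenty of room above---the paper's choice of $L$ is exponential in a small power of $d_0$ and still works, since any $L$ with $\log^2 L = n^{o(1)}$ keeps the losses negligible. This does not affect the correctness of your argument.
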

\begin{proof}
    By T\'uran's Theorem, we may assume $d_0:=d(G)\ge \sqrt{n}/2$, otherwise we have an independent set of size $\sqrt{n}$. Applying the dichotomy from Lemma~\ref{Lem:dichotomy} with $L = 2^{d_0^{\eps/12}}\ge 2^{n^{\eps/25}}$ and assuming $n$ is sufficiently large, we have that $L/2>n$, whence the first outcome cannot happen, which means we can pass to an induced subgraph $G^*$ of $G$ with $d(G^*)\ge \Omega(d_0/d_0^{\varepsilon/12})\ge  \Omega(n^{1/2-\eps/6})$ and $\Delta(G^*)\le O(d(G^*)n^{\eps/6})$.

    Again assuming $n$ is sufficiently large, we have that $\Delta(G^*) \le d(G^*)^{1+\eps}$, so we can now apply Lemma~\ref{Lem: denseorC_4} to get the result (taking $d:= d(G^*)\ge n^{1/3}$). 
\end{proof}

\hide{\zh{I don't know where this should go now.}Obviously this implies that $\chi(G)=C(\ell, t)k^{O(t)} $. 
In the concluding remarks we shall explain how it follows easily from a result of Chudnovski, Seymour, Scott and Spirkl an extension of Theorem~\ref{thm: main}, namely that families of graphs avoiding $k$ vertex disjoint and pairwise non-adjacent $t$-theta graphs are polynomially $\chi$-bounded.}

\section{Finding and using subdivisions}

\subsection{A reduction}

\begin{definition}
    Given a multihypergraph $\HH = (V,E)$, we define the $1$-subdivision of $\HH$ to be the bipartite graph $\Gamma$ with bipartition $A,B$, having bijections $\phi_A:A\to E,\phi_B:B\to V$, so that
    \[N_\Gamma(a) = \{b\in B: \phi_B(b)\in \phi_A(a)\}\]for each $a\in A$.
    
\end{definition}

By definition, it is obvious that the following property holds.

\begin{observation}\label{subdivision subgraphs are induced}
    Let $G$ be the $1$-subdivision of a multihypergraph $\mathcal{H}$. Suppose $\mathcal{H}'$ is a subhypergraph of $\mathcal{H}$ (obtained by removing some edges and vertices), then the $1$-subdivision of $\mathcal{H}'$ is an induced subgraph of $G$.  
\end{observation}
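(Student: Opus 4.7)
The plan is to unpack the definitions and verify that the induced bipartite structure on the surviving vertices coincides with the $1$-subdivision of $\HH'$. The statement is essentially a direct consequence of the fact that $1$-subdivisions are bipartite and that the incidence structure is encoded entirely by edges between $A$ and $B$.

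First, I would set up notation. Write $\mathcal{H}=(V,E)$ and $\mathcal{H}' = (V', E')$ with $V'\subset V$ and $E'\subset E$ (each edge in $E'$ being a subset of $V'$). Let $G=(A\cup B, E_G)$ be the $1$-subdivision with the bijections $\phi_A,\phi_B$ as in the definition. Set $A' := \phi_A^{-1}(E')\subset A$ and $B' := \phi_B^{-1}(V')\subset B$, and let $G^* := G[A'\cup B']$ be the induced subgraph on these vertices.

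Next, I would exhibit the isomorphism. The restrictions $\phi_A|_{A'}$ and $\phi_B|_{B'}$ are bijections onto $E'$ and $V'$ respectively. Because $G$ is bipartite by construction, $G^*$ has no edges inside $A'$ or inside $B'$. For $a\in A'$ and $b\in B'$, the adjacency $ab\in E(G^*)$ holds iff $ab\in E_G$, which by the defining formula $N_\Gamma(a)=\{b\in B:\phi_B(b)\in \phi_A(a)\}$ is equivalent to $\phi_B(b)\in \phi_A(a)$. Since $\phi_A(a)\in E'\subset E$ and $\phi_B(b)\in V'$, this is exactly the condition defining the neighborhood in the $1$-subdivision of $\HH'$. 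Thus $G^*$ is isomorphic (via $\phi_A|_{A'}$ and $\phi_B|_{B'}$) to the $1$-subdivision of $\HH'$.

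There is no real obstacle here; the only subtlety worth flagging is that in a subhypergraph each retained edge is required to be contained in the retained vertex set, which guarantees that for every $a\in A'$ we have $\phi_A(a)\subset V'$, so no neighbor of $a$ in $G$ gets deleted from $B'$ for the wrong reason. With this in hand, the verification above reduces to reading off the definitions.
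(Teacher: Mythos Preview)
Your argument is correct and is exactly the definitional unpacking the paper has in mind; the paper itself does not give a proof, merely remarking that ``by definition, it is obvious that the following property holds.'' Your write-up simply spells out that obviousness, including the small point that each retained hyperedge lies inside $V'$, so nothing more is needed.
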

\noindent This is quite helpful, since it allows us to reduce problems about finding induced subgraphs in $G$ into problems about finding (not necessarily induced) subgraphs inside the auxiliary graph $H$ (which are understood much better).

We show now how to find a $1$-subdivision of a \textit{simple} graph with many edges from a $1$-subdivision of an $s$-uniform hypergraph of high enough average degree, provided the hyperedges do not clump too much.  
\begin{proposition}\label{drop uniformity}
    Let $s,t\geq 2$ and $H$ be a $1$-subdivision of a $s$-uniform multihypergraph $\HH$, with $d(\HH) \ge d$. Suppose $H$ is $K_{s,t}$-free, then $H$ contains an induced subgraph $H'$ which is a $1$-subdivision of a simple graph $G$ with average degree $\Omega_{s,t}(d^{1/(s-1)})$.
\end{proposition}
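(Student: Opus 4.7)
My plan is to randomly sample vertices of $\HH$ and then extract a simple subgraph from the resulting structure. First, I will reduce to the case where $\HH$ is simple. If some $s$-element set were a hyperedge of $\HH$ with multiplicity at least $t$, then the corresponding $t$ hyperedge-vertices in $A$ together with the $s$ vertices of that set would form a $K_{s,t}$ in $H$, contradicting the hypothesis. Hence each $s$-set appears at most $t-1$ times, and deduplicating $\HH$ loses at most a factor $t-1$ in $d(\HH)$; by Observation~\ref{subdivision subgraphs are induced}, the $1$-subdivision of the resulting simple sub-hypergraph is an induced subgraph of $H$, so we may assume $\HH$ is simple from the start.

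Next, I will form $B' \subseteq V(\HH)$ by including each vertex independently with probability $p$, where $p$ will be of order $d^{-(s-2)/(s-1)}$ (up to a constant depending on $s,t$). Let $A^* = \{e \in E(\HH) : |e \cap B'| = 2\}$. Then by Observation~\ref{subdivision subgraphs are induced} the induced bipartite graph $H[A^* \cup B']$ is the $1$-subdivision of a multigraph $M$ on $B'$, in which each $e \in A^*$ contributes the edge $e \cap B'$. Standard computations give $\E[|A^*|] = \Theta_s(mp^2)$ and $\E[|B'|] = np$, so $M$ has expected average degree $\Theta_s(mp/n) = \Theta_s(d^{1/(s-1)})$.

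The crux is to bound collisions, i.e., pairs of distinct hyperedges $e_1 \neq e_2$ with $e_1 \cap B' = e_2 \cap B'$ of size $2$. The expected number of such collisions is
\[
\frac{p^2}{2}\sum_{e_1 \neq e_2}\binom{|e_1 \cap e_2|}{2}(1-p)^{2s - |e_1 \cap e_2| - 2}.
\]
I will control this via the $K_{s,t}$-free condition, which gives codegree of any $t$-set at most $s-1$, and (by averaging) codegree of any $i$-set at most $O_{s,t}(n^{t-i})$ for $i \le t$. Combined with the $(1-p)^{s-k}$ suppression that appears after dividing by $\E[|A^*|]$ and taking $p$ with a sufficiently small leading constant depending on $s,t$, the goal is to ensure that the expected number of collisions is at most $\frac{1}{2}\E[|A^*|]$. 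Retaining one hyperedge per colliding pair then yields $A' \subseteq A^*$ such that $H[A' \cup B']$ is the $1$-subdivision of a simple graph $G$ with $d(G) = \Omega_{s,t}(d^{1/(s-1)})$.

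The main obstacle will be controlling the $k = 2$ term when $t \geq 3$, since pair codegrees can be as large as $\Theta(n^{t-2})$ and are not directly bounded by $K_{s,t}$-freeness. I expect to resolve this either by a case split (if some pair has exceptionally high codegree, build $G$ directly from the link of that pair, which by itself carries enough structure; otherwise, the straightforward random sampling above suffices) or by a more refined choice of $p$ which exploits the $(1-p)^{s-k}$ suppression factors to balance each intersection-size contribution term-by-term.
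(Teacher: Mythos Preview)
Your reduction to a simple hypergraph via the multiplicity bound is correct and matches the paper. After that point, however, the random-sampling scheme has a real gap at exactly the place you flag.

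Neither of your proposed fixes closes it. A ``more refined choice of $p$'' cannot help: after dividing by $\E[|A^*|]$, the suppression factor attached to the $k=2$ term is $(1-p)^{s-2}$, which is $\Theta(1)$ for every $p$ bounded away from $1$ (and you need $p\to 0$ to get exponent $1/(s-1)$). Concretely, for $s=t=4$ take the simple $4$-uniform hypergraph on $\{u,v\}\cup [N]$ with edge set $\{\{u,v,a,b\}:\{a,b\}\in\binom{[N]}{2}\}$; it is $K_{4,4}$-free with $d\approx 2N$, but $\sum_{e_1\ne e_2}\binom{|e_1\cap e_2|}{2}=\Theta(N^4)$ while $m=\Theta(N^2)$, so the expected number of collisions exceeds $\E[|A^*|]$ by a factor $\Theta(N^2)$ regardless of $p$. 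The ``case split'' is also too vague to rescue the argument: the link of a high-codegree pair is an $(s-2)$-uniform hypergraph (so for $s\ge 5$ you are not yet at a graph), and even for $s=4$ that link can be an arbitrary sparse simple graph (a matching, say) with average degree far below $d^{1/(s-1)}$. The two regimes do not meet: for your sampling step you need every pair codegree to be $O_{s,t}(1)$, whereas for the link to directly furnish $G$ you need some pair codegree of order at least $n\,d^{1/(s-1)}$.

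The paper sidesteps collisions entirely by a different (and shorter) route. After passing to a simple $\HH^*$, it randomly partitions $V(\HH^*)$ into $s$ classes to obtain an $s$-partite sub-hypergraph with $d=\Omega_{s,t}(d)$, and then passes to the sub-hypergraph of maximum average degree to force minimum degree $d'=\Omega_{s,t}(d)$. For each $v$ in the largest class $V_s'$, every hyperedge through $v$ meets each class in exactly one vertex, whence $\prod_{i<s}|N_i(v)|\ge d'$ and $|N_{i_v}(v)|\ge (d')^{1/(s-1)}$ for some $i_v$. Pigeonholing on $i_v$ yields a common index $i$, and the \emph{simple} bipartite graph $G$ on $V_s'\cup V_i'$ (with $x_s\sim x_i$ iff some hyperedge contains both) has $d(G)=\Omega_{s,t}(d^{1/(s-1)})$. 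Since each hyperedge meets $V_s'\cup V_i'$ in exactly two vertices, selecting one witnessing hyperedge per edge of $G$ exhibits the $1$-subdivision of $G$ as an induced subgraph of $H$---no collision analysis is needed at all.
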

\begin{proof}
    We will first remove some hyperedges from $\HH$, to obtain a ``cleaner'' hypergraph $\HH^*$. This corresponds to deleting vertices from $H$.
    
    First, since $H$ is $K_{s,t}$-free, no hyperedge $e\in E(\HH)$ can have multiplicity greater than $ t-1$. Thus, by removing some hyperedges, we can obtain a simple hypergraph $\HH^*\subset \HH$ with $d(\HH^*)\ge d/t$. 
    
    Next, by randomly partitioning the vertices of $\HH^*$ into $s$ parts, we can find parts $V_1,\dots,V_s$, so that the $s$-partite subhypergraph $\HH^{**} := \HH^*[V_1,\dots,V_s]$ has $d(\HH^{**}) \ge d(\HH^*)s!/s^s= \Omega_{s,t}(d)$.

    Finally, pass to a choice of $V'$ where $\HH^{***}:= \HH^{**}[V']$ has maximal average degree, and write $V_i':= V'\cap V_i$ for $i\in [s]$. We have that $d_{\HH^{***}}(v)\ge d':= \frac{d(\HH^{**})}{s}= \Omega_{s,t}(d)$ for each $v\in V'$.

    We may and will assume $|V_s'|\ge |V_i'|$, for all $i\in [s-1]$. For each $v\in V_s'$ and $i\in [s-1]$, write \[N_i(v) := V_i' \cap \bigcup_{e\in E(\HH^{***}):v\in e}e.\]Clearly, we have that $ \prod_{i=1}^{s-1}|N_i(v)|\ge d_{\HH^{***}}(v)\ge d'$. Thus for each $v\in V_s'$, there is some choice of $i=i_v$ so that $|N_i(v)|\ge d'^{1/(s-1)}$. By pigeonhole, we can find some choice of $i\in [s-1]$ so that $\sum_{v\in V_s'}|N_i(v)| \ge \frac{d'^{1/(s-1)}}{s-1}|V_s'| = \Omega_{s,t}(d^{1/(s-1)})|V_s'\cup V_i'|$ (recall $|V_s'|\ge |V_i'|$). Consider the graph $G$ with vertex set $V(G)\coloneqq V'_s\cup V'_i$ where $(x_s,x_i)\in E(G)$, if $x_s\in V'_s $ and $x_i\in V'_i$ and $\{x_s,x_i\}\subset e$, for some $e\in \mathcal{H}^{***}$. 
    By the above $d(G)=\Omega_{s,t}(d^{1/(s-1)})$. Finally, note that the $1$-subdivison of $G$ is an induced subgraph of $H$. 
\end{proof}
Combining Proposition~\ref{drop uniformity} with Theorem~\ref{balanced subdivision} and Observation~\ref{subdivision subgraphs are induced}, we immediately get the following (since the $1$-subdivision of a balanced subdivision of $K_h$ is still a balanced subdivision of $K_h$).
\begin{corollary}\label{subdivision reduction}
    Fix $t\ge s\ge 2$. Let $G$ be a $K_{s,t}$-free graph and suppose it contains an induced $1$-subdivision of an $s$-uniform multihypergraph $\HH$. Then, $G$ contains an induced proper balanced subdivision of $K_h$, where $h = \Omega_{s,t}(d(\HH)^{1/2(s-1)})$.
\end{corollary}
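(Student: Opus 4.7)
The proof is essentially a chain of three tools already set up in the paper, so the plan is a short composition. First, let $H \subseteq G$ be the induced subgraph witnessing that $G$ contains an induced $1$-subdivision of $\HH$. Since $G$ is $K_{s,t}$-free and $H$ is induced in $G$, the graph $H$ is also $K_{s,t}$-free, so we may apply Proposition~\ref{drop uniformity} to $H$ (regarded as the $1$-subdivision of $\HH$). This produces an induced subgraph $H' \subseteq H$ which is itself the $1$-subdivision of a \emph{simple} graph $G_0$ with
\[
d(G_0) = \Omega_{s,t}\bigl(d(\HH)^{1/(s-1)}\bigr).
\]

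Second, I apply Theorem~\ref{balanced subdivision} to $G_0$, obtaining a (not necessarily induced) subgraph $T \subseteq G_0$ which is a balanced subdivision of $K_h$ where
\[
h = \Omega\bigl(\sqrt{d(G_0)}\bigr) = \Omega_{s,t}\bigl(d(\HH)^{1/(2(s-1))}\bigr),
\]
as claimed by the target bound.

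Third, view $G_0$ as a $2$-uniform hypergraph, so that $T$ corresponds to a subhypergraph of $G_0$. Observation~\ref{subdivision subgraphs are induced} then says that the $1$-subdivision of $T$ is an induced subgraph of the $1$-subdivision of $G_0$, which is precisely $H'$. Hence the $1$-subdivision of $T$ is induced in $H' \subseteq H \subseteq G$. Finally, if $T$ is a balanced subdivision of $K_h$ with each edge of $K_h$ replaced by a path of common length $\ell \ge 1$, then the $1$-subdivision of $T$ is a balanced subdivision of $K_h$ with common path length $2\ell \ge 2$, and in particular is a \emph{proper} balanced subdivision of $K_h$. This gives the required induced proper balanced subdivision of $K_h$ in $G$.

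There is no real obstacle here beyond book-keeping, since all three ingredients are off-the-shelf; the only mild point to verify is that the $1$-subdivision of a balanced subdivision is still balanced and becomes proper, which is immediate from the length doubling $\ell \mapsto 2\ell$.
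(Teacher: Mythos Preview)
Your proof is correct and follows essentially the same three-step approach as the paper: apply Proposition~\ref{drop uniformity} to pass to the $1$-subdivision of a simple graph, invoke Theorem~\ref{balanced subdivision} inside that simple graph, and then use Observation~\ref{subdivision subgraphs are induced} to pull the resulting balanced subdivision back as an induced subgraph. Your explicit remark that the path length doubles from $\ell$ to $2\ell$ (hence the subdivision is proper) is a nice clarification the paper leaves implicit.
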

\begin{proof}
    Applying Proposition~\ref{drop uniformity}, $G$ contains an induced subgraph $G'$, which is the $1$-subdivision of some simple graph $H$ with $d(H) = \Omega_{s,t}(d(\HH)^{1/(s-1)})$. Then by Theorem~\ref{balanced subdivision}, $H$ contains a balanced subdivision of $K_h$ for some $h= \Omega(d(H)^{1/2})$ as a (not necessarily induced) subgraph. Finally, Observation~\ref{subdivision subgraphs are induced} will tell us that $G'$ contains the $1$-subdivision of this balanced subdivision of $K_h$, as an induced subgraph. This gives us our desired induced subgraph $H'$. 
\end{proof}

\subsection{Finding subdivisions of multihypergraphs}

\begin{lemma}\label{Lem:unbalanced}
   Let $k,s,d\ge 1$ be integers so that the following estimates hold: 
   \[ d\ge (2(10+s))^s, \quad \text{and} \quad d/20>\binom{s+k}{s} .\]
   Let $G$ be a $d$-degenerate graph with a bipartition $V(G)=A \cup B$ with $e(G[A,B])\geq \frac{d|A|}{10}$ and $|A|> 40d^{s+2}|B|$. If $\omega(G)\leq k $, then $G$ contain an induced $1$-subdivision of a $s$-uniform hypergraph $\HH$ with $d(\HH)\ge d$. 
\end{lemma}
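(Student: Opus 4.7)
The plan is to build the induced $1$-subdivision in the simple ``book'' form where $\HH$ consists of a single $s$-hyperedge $T^*\subset B$ with multiplicity $D\ge d$. In terms of $G$, this amounts to finding an $s$-subset $T^*\subset B$ that is independent in $G[B]$, together with an independent (in $G[A]$) set $A^\star\subset\bigcap_{b\in T^*}N_A(b)$ of size at least $d$; then $G[A^\star\cup T^*]$ is the $1$-subdivision of $\HH$ and $d(\HH)=|A^\star|\ge d$.

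First I would clean up $A$ by restricting to $A^+:=\{a\in A:d_B(a)\ge d/20\}$. Since vertices outside $A^+$ contribute at most $(d/20)|A|$ edges, we have $e(G[A^+,B])\ge d|A|/20$, and using $d_B(a)\le|B|$ this gives $|A^+|\ge d|A|/(20|B|)\ge 2d^{s+3}$ from $|A|\ge 40d^{s+2}|B|$. For each $a\in A^+$, since $d_B(a)\ge d/20>\binom{s+k}{s}\ge R(s,k)$ and $\omega(G)\le k$, Ramsey's theorem (in the form cited before Theorem~\ref{BTsubd}) yields an independent (in $G[B]$) $s$-subset $T_a\subset N_B(a)$.

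Next I would pigeonhole on the map $a\mapsto T_a$: its image lies in the at most $\binom{|B|}{s}\le |B|^s/s!$ independent $s$-subsets of $B$, so some $T^*$ is shared by at least $|A^+|s!/|B|^s$ vertices $a\in A^+$. Letting $A_{T^*}:=\{a\in A^+:T_a=T^*\}\subseteq\{a\in A^+:T^*\subseteq N_B(a)\}$, the subgraph $G[A_{T^*}]$ is $d$-degenerate (as a subgraph of $G$) and hence $(d+1)$-colorable, so one of its color classes is an independent set $A^\star\subset A$ of size at least $|A_{T^*}|/(d+1)$. By construction $A^\star\subseteq\bigcap_{b\in T^*}N_A(b)$, and $T^*$ is independent in $G[B]$, so $G[A^\star\cup T^*]$ is the desired induced $1$-subdivision, with $d(\HH)=|A^\star|\ge|A_{T^*}|/(d+1)$.

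The main obstacle is ensuring $|A^\star|\ge d$, which via the pigeonhole reduces to controlling the ratio $|A^+|/|B|^s$. I would strengthen the count in two ways. (i) Instead of one independent $s$-subset per $a$, count \emph{all} independent $s$-subsets inside $N_B(a)$: by $d$-degeneracy of $G[N_B(a)]\subseteq G[B]$ (which has at most $d\cdot d_B(a)$ edges), the number of $s$-subsets containing a $G[B]$-edge is at most $d\cdot d_B(a)^{s-1}/(s-2)!$, and a dyadic decomposition of $A^+$ by $d_B$-degree (focussing on a bin where $d_B(a)\gtrsim ds^2$, so that at least half of all $s$-subsets are independent) pushes the admissible $|B|$ up to $O_s(d^{2+O(1/s)})$. (ii) For $|B|$ still larger, iteratively delete from $B$ the half of the vertices with below-average $d_A$-degree; each such deletion preserves a constant fraction of the edges $e(G[A,B])$ while keeping the hypotheses of the lemma intact, so after $O(\log|B|)$ rounds $|B|$ is reduced below the threshold where (i) applies. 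The strong assumption $d\ge(2(10+s))^s$ is precisely what allows the accumulated constant-factor losses along these $O(s)$-many scales to be absorbed without breaking the final bound $d(\HH)\ge d$.
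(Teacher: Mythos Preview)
Your plan targets a much stronger structure than the lemma requires: you are trying to find an induced $K_{d,s}$, i.e.\ a single independent $s$-set $T^*\subset B$ together with $d$ independent common neighbours in $A$. Under the stated hypotheses this structure need not exist. Take $G[A]=G[B]=\emptyset$, $|B|=d^{10}$, $|A|=d^{s+13}$, and let each $a\in A$ be joined to a uniformly random $d$-subset of $B$. All hypotheses hold ($G$ is $d$-degenerate, $e(G[A,B])=d|A|$, $|A|>40d^{s+2}|B|$, $\omega(G)=2$), yet for any fixed $s$-set $T\subset B$ the expected number of $a\in A$ with $T\subset N_B(a)$ is $|A|(d/|B|)^s=d^{13-8s}<1$ for $s\ge 2$; with high probability no $s$-set in $B$ has even two common neighbours in $A$, let alone $d$.

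Your repairs do not close this gap. Fix~(i) (counting all independent $s$-subsets of $N_B(a)$) gains at most a factor $\binom{d_B(a)}{s}\le\binom{|B|}{s}$ per vertex, which is exactly what you lose in the pigeonhole; the net effect works only when $|B|\le d^{2+O(1/s)}$, as you note. Fix~(ii) (halving $B$) loses half the cross-edges at each step, so after $t$ rounds the average $d_{B}$-degree of $A$-vertices drops to $d/(10\cdot 2^t)$; once this falls below $\binom{s+k}{s}$ the Ramsey step is unavailable. Thus you can afford at most $t=O(\log d)$ rounds, shrinking $|B|$ by only a $\mathrm{poly}(d)$ factor. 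The claim that ``$O(s)$-many scales'' suffice and that the hypothesis $d\ge(2(10+s))^s$ absorbs the losses is incorrect: the number of rounds needed is $\log_2(|B|/d^{O(1)})$, and $|B|$ is not bounded in terms of $d,s,k$.

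The paper's argument avoids this obstruction entirely. After the same clean-up to $A'\subset A$ (independent, degrees into $B$ in $[d/20,10d]$), it does \emph{not} look for a single high-codegree $s$-set. Instead it samples $B_p\subset B$ with $p=\frac{1}{2(10+s)d}$ and keeps those $x\in A'$ whose neighbourhood in $B_p$ has size exactly $s$ and is independent (the independence of $B_p$ itself is arranged via a degeneracy ordering of $B$). A first-moment computation gives $\E[|A''|]\ge |A'|p^s/2$ and $\E[|B_p|]=p|B|$, so the ratio $|A''|/|B_p|$ is controlled by $|A'|/|B|\ge d^{s+1}$ rather than by any codegree. The resulting multihypergraph $\HH$ has $|B_p|$ vertices and $|A''|$ (typically distinct) hyperedges, giving $d(\HH)=s|A''|/|B_p|\ge d$. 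The key conceptual difference is that the lemma only asks for \emph{average} degree of $\HH$, which is governed by the global ratio $|A'|/|B|$, not by how many $a$'s share a common $s$-set.
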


\begin{proof}
    First, since $G$ is $d$-degenerate, one can prove the following claim.
    \begin{claim}\label{make regular}
        There exists some $A'\subset A$ with $|A'|\ge |A|/40d$, so that $G[A'] = \emptyset$ and $d_B(x) \in [d/20,10d]$ for each $x\in A'$.
    \end{claim}
    \noindent We shall delay this proof for the time being. Instead, let us fix such a choice of $A'$ and focus on $G[A'\cup B]$. We have that $|A'|> d^{s+1}|B|$.

    By $d$-degeneracy we may order the vertices in $B$, say $b_1,\ldots b_{|B|}$, such that for all $i\in [|B|]$, $b_i$ sends at most $d$ edges to $b_1,b_2,\ldots, b_{i-1}$. We now take a random subset of $B$ where each vertex is chosen with probability $p=\frac{1}{2(10+s)d}$ independently. Let $B_p$ be the random subset. 
    Let $x\in A'$ and $X(x)$ be a random variable which is $1$ if $x$ has exactly $s$ non-neighbours $b_1,b_2,\dots,b_s \in B_p$ such that each of them send no edges to its left in the ordering, and $0$ otherwise. 
    
    We will now compute $\mathbb{E}[X(x)]$. Let $N_B(x)=\{b_{i_1},\ldots, b_{i_{\ell}}\}$, for some $\ell \in [d/20,10d]$, with $i_1<i_2<\ldots <i_{\ell}$. 
    Note that since $d/20> {s+k \choose k}$, Ramsey's Theorem guarantees an independent set of size $s$ inside $N_B(x)$. Thus,
    \[
    \mathbb{E}[X(x)]=\mathbb{P}[X(x)=1]\geq p^s (1-p)^{sd+\ell}> p^s(1-(10+s)dp)\ge d^{-{(s+1)}}/2.
    \]
    Finally, \[
    \mathbb{E} \big [\sum_{x\in A'}X(x)-d|B_p|\big ]\ge \frac{|A'|}{2d^{s+1}}-\frac{1}{2(10+s)}|B|>0
    \]
    Fix a choice for which the above is positive. We have thus constructed a subset $A''\subset A'$ where every vertex in $A''$ sends exactly a pair of edges to $B_p$,
    $G[B_p]=\emptyset$ and $|A''|\geq d|B_p|$. Since each $a\in A''$ has exactly $s$ neighbors in $B_p$, we see that $G' :=G[A''\cup B_p]$ is the 1-subdivision of an $s$-uniform multihypergraph $\HH$, moreover we have that $d(\HH) = s|A''|/|B_p|\ge d$ as desired. 
 
    \begin{proof}[Proof of Claim~\ref{make regular}]
        First, write $A_- := \{x\in A: d_B(x) <d/20\}$. Noting that $e(G[A\setminus A_-,B])\ge \frac{d|A|}{20}$, we must have $|A\setminus A_-|\ge |A|/10$, otherwise we will have $d(G[A\setminus A_-,B])>d$, contradicting that $G$ is $d$-degenerate.

        Next, write $A_+:= \{x\in A: d_B(x)>10d\}$. We must have $|A_+|\le |B|$, otherwise $d(G[A_+,B])>d$ (again contradicting $d$-degeneracy).

        So now write $A_0 := \{x\in A: d_B(x)\in [d/20,10d]\}$. We have that $|A_0|\ge |A|/10-|B|\ge |A|/20$.

        Finally, since $G$ is $d$-degenerate, we can partition $A_0$ into $d+1\le 2d$ independent sets. Take $A'$ to be the largest of these sets which will have size at least $ |A|/40 d$.
    \end{proof}
    
\end{proof}

On the other hand, if we are almost-regular, then we have the following result. 

\begin{lemma}\label{almost regular}
    Fix $t\ge s\ge 2$, and set $\eps_0 = 1/(50000s^2)$. The following holds for all sufficiently large $k$ with respect to $s$ and $t$.

    Let $G_0$ be a graph with $\omega(G_0)\le k$ and $d(G_0)= d_0\ge k^{1000t}$. Suppose furthermore that $\Delta(G_0) \le d_0^{1+\eps}$ and $G_0$ is $K_{s,t}$-free. Then $G_0$ contains an induced $1$-subdivision of an $s$-uniform multihypergraph $\HH$ with $d(\HH)\ge \Omega_{s,t}(d_0^{1/(40000s)})$.
\end{lemma}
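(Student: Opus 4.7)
The plan is to produce disjoint independent sets $A,B\subset V(G_0)$ such that every $a\in A$ has exactly $s$ neighbors in $B$. Then $G_0[A\cup B]$ is the induced $1$-subdivision of the $s$-uniform multihypergraph $\HH$ on vertex set $B$ whose (multi)edges are $\{N_{G_0}(a)\cap B:a\in A\}$, with average degree $d(\HH)=s|A|/|B|$, so it suffices to arrange $|A|/|B|$ to be a small polynomial in $d_0$.

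First I would regularize as in Lemma~\ref{bootstrap almost reg}, passing to an almost-$D$-regular induced subgraph $G^{*}\subseteq G_0$ on $n^{*}$ vertices, with every degree in $[D,2D]$ and $D\ge d_0^{1-o(1)}$. Next I would sample $B_0\subseteq V(G^{*})$ by keeping each vertex independently with probability $p:=s/D$; for every $v$, the random variable $|N(v)\cap B_0|$ is approximately $\mathrm{Poisson}(s)$, so a constant fraction of vertices have exactly $s$ neighbors in $B_0$. Since $\mathbb{E}[e(G^{*}[B_0])]=p^2\,e(G^{*})=O_s(|B_0|)$, a Tur\'an-style step produces an independent $B\subseteq B_0$ with $|B|=\Omega_s(|B_0|)=\Omega_s(sn^{*}/D)$; a short additional argument (an LLL-type bound, or a conditional coupling) shows that a constant fraction of vertices still satisfy $|N(v)\cap B|=s$ after the cleanup, giving a set $A_0$ with $|A_0|=\Omega_s(n^{*})$.

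To extract an independent $A\subseteq A_0$, I would apply Lemma~\ref{too dense theta} to $G^{*}[A_0]$: this graph inherits $K_k$-freeness and induced $K_{s,t}$-freeness, and (since $|A_0|\ge k^{100t}$) we must have $d(G^{*}[A_0])\le|A_0|^{1-1/(100s)}$, hence $G^{*}[A_0]$ contains an independent set $A$ of size $\Omega_s\bigl(|A_0|^{1/(100s)}\bigr)\ge\Omega_s\bigl((n^{*})^{1/(100s)}\bigr)$. Plugging everything together yields $d(\HH)\ge\Omega_s\bigl(D/(n^{*})^{1-1/(100s)}\bigr)$. The main obstacle I expect is controlling $n^{*}$ from above: the hypothesis bounds only $d(G_0)$ and $\Delta(G_0)$, so $n^{*}$ could a priori be arbitrarily large, while Lemma~\ref{too dense theta} furnishes only the \emph{lower} bound $n^{*}\ge d_0^{1+\Omega(1/s)}$. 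When $n^{*}$ is close to this threshold the target $d_0^{1/(40000s)}$ is comfortably met; when $n^{*}$ is much larger I would need either a preliminary uniform subsampling of $V(G^{*})$ (accepting a controlled loss in $D$), or a more local construction in which $B$ is built inside the neighborhood $N(v)$ of a single vertex, using Lemma~\ref{too dense theta} on $G^{*}[N(v)]$ both to bound its density and to produce a large independent set there. Tuning this trade-off so that the final exponent lands below $1/(40000s)$ is precisely where the generous slack in the small constants $\eps_0=1/(50000s^2)$ and the exponent $1/(40000s)$ gets absorbed.
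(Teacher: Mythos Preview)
Your plan has a genuine gap, and you have already put your finger on it: the step where you extract an independent $A$ from $A_0$ via Tur\'an is far too lossy, and neither of your suggested fixes repairs it.

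Concretely, after your sampling you have $|A_0|=\Theta_s(n^*)$ and $|B|=\Theta_s(n^*/D)$. Applying Lemma~\ref{too dense theta} and Tur\'an to $G^*[A_0]$ only yields an independent set $A$ of size $\gtrsim (n^*)^{1/(100s)}$, so
\[
d(\HH)=\frac{s|A|}{|B|}\ \lesssim\ \frac{D}{(n^*)^{1-1/(100s)}}.
\]
But the very same Lemma~\ref{too dense theta}, applied to $G^*$ itself, forces $D<(n^*)^{1-1/(100s)}$, so this ratio is $\le 1$ and you get nothing. Uniform subsampling of $V(G^*)$ by a factor $q$ replaces $(n^*,D)$ by $(qn^*,qD)$; then $|B|$ stays $\Theta_s(n^*/D)$ while $|A|$ can only shrink, so the ratio does not improve. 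The ``local'' variant of building $B$ inside a single neighbourhood $N(v)$ gives no control on how many vertices have exactly $s$ neighbours in that small $B$, so it does not produce a usable $A_0$ either. The slack in the constants cannot absorb a bound that is $\le 1$.

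The paper proceeds differently. The crucial extra idea is a codegree cleanup (Claim~\ref{almost regular, cleanup}): using both $K_k$-freeness and $K_{s,t}$-freeness (via Lemma~\ref{too dense theta}) one passes to an induced $G\subset G_0$, still almost-regular with $d(G)\asymp d:=d_0^{1/10}$, in which \emph{every adjacent pair} $x\sim y$ satisfies $|N(x)\cap N(y)|\le 3d^{1-1/(1000s)}$. One then samples $B_0$ with probability $p=d^{-1-\delta}$ for a small $\delta=\Theta(1/s)$ (slightly \emph{under}sampled), so that $B_0$ is automatically independent with high probability, and lets $A_0$ be the set of vertices with exactly $s$ neighbours in $B_0$. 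The codegree bound is now used not to find an independent set by Tur\'an, but to show that for adjacent $x,y$ the events $\{x\in A_0\}$ and $\{y\in A_0\}$ are essentially independent: one obtains $\P(x,y\in A)\le O_s(P_xP_y)$ with $P_x=(d(x)p)^s$. This yields $\E[e(G[A])]\le O_s(n d^{1-2\eta+O(\eps)})$ against $\E[|A|]\ge \Omega_s(nd^{-\eta})$, so an \emph{alteration} step (subsample $A$ by $p'\approx d^{-1+\eta}$ and delete a vertex from each surviving edge) produces an independent $A''$ with $|A''|\ge d^{\Theta(1/s)}|B|$, giving $d(\HH)\ge d^{\Theta(1/s)}=d_0^{\Theta(1/s)}$ as required. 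In short, the missing ingredient in your argument is that one must first enforce small codegrees along edges and then exploit this to bound $e(G[A_0])$ directly; extracting $A$ by Tur\'an from a set that still carries essentially the full edge density of $G$ cannot work regardless of how the parameters are tuned.
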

\begin{remark}
    The exponents are sharp (up to the constants `$1000$' and `$1/40000$') for a few reasons. By considering $G\sim G(2k^{t/1000},1-1/k^{1/2})$ for $k\gg t$, we can get graphs with average degree $k^{t/1000}$ that lack independent sets of size $t$ (so it is $K_{s,t}$-free and has no $1$-subdivision on $>2t$ vertices), and no cliques of size $k$. Meanwhile, if we consider $G\sim G(n,d/n)$, then some calculations involving Chernoff bounds \hide{(cf. https://arxiv.org/pdf/2207.02170.pdf, Theorem 1.1)}tell us there exist $G$ with average degree $>d/2$, no $4$-cycles and no $1$-subdivisions of $s$-uniform multigraphs $\HH$ with $d(\HH) > d^{1/(s-1)+\eps}$ (for fixed $s\ge 2,\eps>0$, and all large $d$). 
\end{remark}
\noindent Combined with Corollary~\ref{subdivision reduction}, Lemma~\ref{almost regular} would allow us to find a balanced subdivision of $K_{h'}$ for some $h' = \Omega_{s,t}(d_0^{1/(1000000s^2)})$ inside $G_0$. Thus this will not give the sharp exponent of $\Omega(1/s)$ in the second bullet from Theorem~\ref{thm: main}. 

Instead, to get the $\Omega(1/s)$ bound, it will suffice to establish a polynomial bound when $s=t=2$, and then bootstrap that. But we saw no reason not to prove the result for general $s,t$ here.

\begin{proof}
    Write $d:= d_0^{1/10},\eps:= 20\eps_0$. We shall use the following cleaning result.
    \begin{claim}\label{almost regular, cleanup}
        Let $G_0$ be as above. Then we can find an induced subgraph $G\subset G_0$ with $d(G)\ge d/10, \Delta(G')\le d^{1+\eps}$, where $|N_{G'}(x)\cap N_{G'}(y)|\le 3d^{1-1/(1000s)}$ for all $y\in N_{G'}(x)$.  
    \end{claim}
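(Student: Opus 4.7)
The plan is to sub-sample $V(G_0)$ at rate $p := d^{-9} = d_0^{-9/10}$ to obtain an induced subgraph $G_1$ with $d(G_1) \approx d$ and $\Delta(G_1) \lesssim d^{1+10\eps_0}$, and then delete a tiny fraction of $V(G_1)$ to kill any remaining edges whose codegree exceeds $3 d^{1-1/(1000s)}$. Setting $M_0 := d_0^{1 - 1/(10000s)}$, we have $pM_0 = d^{1-1/(1000s)}$, so standard Chernoff bounds (with a union bound over pairs, affordable since $d \ge k^{100t} \gg \log n$) show that after sampling, every edge $\{x,y\} \in E(G_1)$ whose pre-sampling codegree $|N_{G_0}(x) \cap N_{G_0}(y)|$ is at most $M_0$ ends up with codegree at most $3 d^{1-1/(1000s)}$ in $G_1$; simultaneously the degree parameters of $G_1$ concentrate around their means. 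The argument thus reduces to controlling the number of \emph{bad edges} of $G_0$, namely those $\{x,y\} \in E(G_0)$ with $|N_{G_0}(x) \cap N_{G_0}(y)| > M_0$.

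The main ingredient is a per-vertex bound: for every $v \in V(G_0)$,
\[
\mathrm{bad}_v := \bigl|\{y \in N_{G_0}(v) : |N_{G_0}(v) \cap N_{G_0}(y)| > M_0\}\bigr| \le d_0^{o(1)}.
\]
I would prove this by double-counting pairs $(y, J)$ where $y$ is counted in $\mathrm{bad}_v$ and $J \subset N_{G_0}(v) \cap N_{G_0}(y)$ is an independent $s$-subset. Since $M_0 \ge k^{100t}$ and $G_0[N_{G_0}(v) \cap N_{G_0}(y)]$ is $K_{k-2}$-free (else a $K_k$ would appear together with $v,y$) and induced $K_{s,t}$-free, Corollary~\ref{supersat} delivers $\Omega_s(M_0^s)$ such $J$ per bad $y$. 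Conversely, for any independent $s$-subset $J \subset N_{G_0}(v)$, the induced $K_{s,t}$-freeness together with $K_k$-freeness and Ramsey forces $|\bigcap_{z \in J} N_{G_0}(z)| < R(k, t) \le k^{O(t)}$ — otherwise an independent $t$-subset in $\bigcap_{z \in J} N_{G_0}(z)$ would combine with $J$ into an induced $K_{s,t}$ — so each $J$ accounts for at most $k^{O(t)}$ values of $y$. Since $N_{G_0}(v)$ contains at most $\binom{d_{G_0}(v)}{s} \le d_0^{s(1+\eps_0)}$ independent $s$-subsets, comparing the two counts yields
\[
\mathrm{bad}_v \le O_s\!\left(R(k,t)\, d_0^{s(1+\eps_0)}/M_0^s\right) = O_s\!\left(R(k,t)\, d_0^{s\eps_0 + 1/10000}\right) \le d_0^{o(1)},
\]
using $s\eps_0 = 1/(50000s)$ together with $R(k,t) \le k^{O(t)} \le d_0^{o(1)}$ (which follows from $d_0 \ge k^{1000t}$).

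Summing over $v$ gives $|B| \le n\, d_0^{o(1)}$ bad edges in $G_0$, and the expected number that survive into $G_1$ is at most $p^2 |B| \le |V_1|\, d_0^{-9/10 + o(1)}$. By Markov, with positive probability we can cover all such surviving bad edges by a vertex set $D \subset V(G_1)$ of size at most $|V_1|\, d_0^{-8/9}$. Since each $v \in D$ has at most $\Delta(G_1) \le 2 d^{1+10\eps_0}$ incident edges, removing $D$ destroys only a $d_0^{-8/9 + 1/10 + o(1)} = o(1)$ fraction of $e(G_1) \ge |V_1| d/4$, and reduces $|V_1|$ by a $d_0^{-8/9}$ fraction. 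The resulting induced subgraph $G := G_1 \setminus D$ therefore satisfies $d(G) \ge d/10$, $\Delta(G) \le d^{1+\eps}$ (using $\eps = 20\eps_0 \ge 10\eps_0$ and absorbing the factor $2$ since $d$ is large), and codegree at most $3 d^{1-1/(1000s)}$ on every edge, as required.

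The main obstacle is the per-vertex bound above: the threshold $M_0$ has to be chosen so that three constraints are simultaneously satisfied — $M_0 \ge k^{100t}$ so that Corollary~\ref{supersat} fires inside every common neighbourhood; the cancellation $d_0^{s(1+\eps_0)}/M_0^s = d_0^{s\eps_0 + 1/10000}$ remains $d_0^{o(1)}$ after absorbing the Ramsey factor $R(k,t)$; and $pM_0$ matches the target codegree $d^{1-1/(1000s)}$ after sampling. These together pin down the choices $M_0 = d_0^{1-1/(10000s)}$, $p = d_0^{-9/10}$, and explain the role of $\eps_0 = 1/(50000s^2)$. The remaining Chernoff bookkeeping and the vertex-cover deletion are then routine.
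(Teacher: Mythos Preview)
Your high-level plan---sample $V(G_0)$ at rate $p=d_0^{-9/10}$, then delete a small set of vertices to eliminate residual high-codegree edges---is exactly the paper's. There is, however, one genuine gap and one methodological difference worth noting.

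\textbf{The gap.} You justify the Chernoff-plus-union-bound step (both for codegrees along light edges and for the degree concentration of $G_1$) by asserting ``$d \ge k^{100t} \gg \log n$''. No such relation is available: the hypotheses of Lemma~\ref{almost regular} place no bound on $n=|V(G_0)|$ in terms of $d_0$, and $G_0$ may have arbitrarily many vertices. The paper avoids this entirely by working in expectation. It defines $Z_1$ (sampled vertices of excessive degree) and $Z_2$ (sampled vertices with some sampled neighbour of excessive codegree), sets $W'=W\setminus(Z_1\cup Z_2)$, and shows that for each \emph{light} edge $\{x,y\}$ one has $\P(\{x,y\}\subset W'\mid \{x,y\}\subset W)\ge 1/2$ via Markov and a single Chernoff bound per neighbour; summing gives $\E[e(G_0[W'])]\ge \frac{p^2}{2}|E_{\text{light}}|$, and one then fixes an outcome beating the mean. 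Your argument can be repaired identically: replace the global union bound by per-edge retention probabilities and a single first-moment inequality.

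\textbf{The difference.} Your per-vertex bound $\mathrm{bad}_v \le d_0^{c}$ (for some small constant $c$; ``$d_0^{o(1)}$'' is imprecise here) via the double-count of pairs $(y,J)$ is correct in spirit, though note that $\omega(G_0)\le k$ forbids $K_{k+1}$, not $K_k$, so the common neighbourhood is $K_{k-1}$-free and the Ramsey bound should be $R(k+1,t)$; this does not affect the numerics. The paper takes a shorter route: if more than $\sqrt{|N(x)|}$ neighbours $y$ of $x$ satisfy $|N(x)\cap N(y)|\ge d_0^{1-1/(1000s)}\ge |N(x)|^{1-1/(100s)}$, then Lemma~\ref{too dense theta} (i.e.\ dependent random choice inside $G_0[N(x)]$) immediately produces a forbidden $K_k$ or induced $K_{s,t}$. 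This yields $|Y_{\text{heavy}}(x)|\le d_0^{2/3}$ in one line, which already suffices since all that is needed downstream is $|Y_{\text{heavy}}(x)|\cdot p = o(1)$. Your bound is sharper but the extra strength is not used.
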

    \noindent We shall prove Claim~\ref{almost regular, cleanup} by sampling vertices uniformly at random with probability $d_0^{-9/10}$ and using the fact that $G_0$ is $K_k$-free and $K_{s,t}$-free. We delay the details until the end of the proof, and now show how to find the 1-subdivision of some appropriate hypergraph $\HH$.

    Without loss generality, we may assume that $\delta(G)\ge d/20$, else we can pass to an induced subgraph of $G$ with higher average degree.

    Let $\eta=1/2000$ (chosen so that $\eta/(2s)>(2s+3)\eps$ and $\eta<1/1000$), and write $\delta := \eta/s$. We assume $k$ is sufficiently large so that $s d^{-1/1000} < 1/2$.

    Take $p := d^{-1-\delta}$, and let $B_0$ be random subset of $V(G)$ where each vertex is included with probability $p$. We then define $A_0$ to be the set of vertices with exactly $s$ neighbors inside $B_0$. Then define $B$ to be the set of vertices $b\in B_0$ with $|N(b)\cap B_0| = 0$. Finally, we define $A$ to be the set of vertices in $a\in A_0$ where $N(a)\cap B = N(a) \cap B_0$ (in particular, this implies $a\not\in B_0$, since $a$ is adjacent to vertices in $B$).

    We will now analyze the expected number of vertices inside $A$ and $B$, and the number of edges within $G[A]$. Set $n:= |V(G)|$.

    For $B$, it suffices to note that
    \[\P(x\in B) \le \P(x\in B_0) =p\]for $x\in V(G)$.

    Understanding $A$ and $e(G[A])$ is more sensitive. For $x\in V(G)$, set $P_x := (d(x)p)^s$. Assuming $k$ is sufficiently large, we shall establish for every $x\in V(G)$ and $y\in N(y)$ that
    \begin{equation}
        \P(x\in A) = \Theta_s(P_x)\label{xprob} 
    \end{equation}
    \begin{equation}
        \P(x\in A\text{ and } y\in A)\ \le O_s(P_xP_y)\label{xyprob}. 
    \end{equation}
    \noindent From here, we get that $\E[|A|] \ge n \Omega_s(d^{-\eta})$ (since $G$ having minimum degree $\ge d/20$ implies $P_x\ge (d^{-\delta}/20)^s$). Meanwhile, $\E[e(G[A])] \le e(G)\max_{x\neq y}\{P_xP_y\} \le O_s(nd^{1-2\eta+(2s+1)\eps})$.

    So now, let $A'$ be a random subset of $A$ where each vertex is kept with probability $p':= d^{-(1-\eta+(2s+2)\eps)} = O_s(d^{-\eps}\frac{\E[Y]}{\E[X]}) $. We have that $\E[|A'|-e(G[A'])]= p'\E[|A|]-p'^2\E[e(G[A])]>(p'/2)\E[|A|] > nd^{-1-(2s+3)\eps}$ for all large $d$. Thus, we can find some outcome where
    \[\E[|A'|-e(G[A'])-d^{-\delta/2}|B|] >0.\] By deleting a vertex from each edge in $A'$, we obtain independent sets $A'',B$ with $|A''|>d^{-\delta/2}|B|$ and $d_B(x) = s$ for each $x\in A''$. This gives our desired subdivision.

    We now prove Equations~\ref{xprob} and \ref{xyprob}. Here, we shall frequently use the inequalities $sd(x)p<1/2$ and $d(x)p< d^{\eps-\delta}d^{-1/(1000s)}$ for any $x\in V(G)$, which are guaranteed by our assumptions. So fix some $x$, and consider $N(x)$.
    
    Let $\II$ be the collection of independent sets $I\subset N(x)$ of size $s$. By Corollary~\ref{supersat}, we get that $|\II|= \Omega_{s}(d(x)^{s})$ since $d(x)\ge d/20\ge k^{100t}$. And obviously $|\II|\le \binom{d(x)}{s}\le d(x)^s$. Thus, we have that \[\P(x\in A_0)= \sum_{I\in \II}\P(B_0\cap N(x) = I) = |\II|p^{s}(1-p)^{d(x)-s}\ge \Theta_s(d(x)^{s})(p^{s}/2)= \Theta_s(P_x).\]
    Furthermore,
    for $I\in \II$ we have\[\P(x\in A|B_0\cap N(x) = I) =  \P(B_0 \cap \bigcup_{y\in I} N(y) = \emptyset|B_0\cap N(x) = I) = (1-p)^{|\bigcup_{y\in I}N(y)|}\ge 1-sd^{1+\eps}p \ge 1/2.\]So we see $\P(x\in A) \ge \P(x\in A_0)/2$, whence $\P(x\in A) = \Theta_s(P_x)$ (since $A\subset A_0$). This establishes Equation~\ref{xprob}.

    Finally, we bound $\P(y\in A_0| x\in A_0)$ for $y\in N_G(x)$. Let $\EE_x$ be the event that $|B_0\cap N(x)| = s$. Again by Corollary~\ref{supersat}, we see that $\P(x\in A_0|\EE_x) = \Omega_s(1)$. So now, for any vertex $y\in V(G)$, we have that
    \[\P(y\in A_0 |x\in A_0) \le \frac{\P(y\in A_0| \EE_x)}{\P(x\in A_0|\EE_x)}\]by Bayes' theorem, and
    \begin{align*}
        \P(y\in A_0|\EE_x)&= \sum_{i=0}^{s} \P(|B_0\cap N(x)\cap N(y)| =i|\EE_x)\P(|B_0\cap (N(y)\setminus N(x))| = s-i)\\
        &\le \sum_{i=0}^{s} O_s\left(\frac{|N(x)\cap N(y)|}{d(x)}\right)^i (d(y)p)^{s-i}\\
        &\le O_s( \max\left\{\frac{|N(x)\cap N(y)|}{d},d(y)p\right\}^{s}).
    \end{align*}Thus, for $y\in N(x)$, we have $\P(y\in A_0|x\in A_0)\le O_s(d^{-1/1000}+P_y) = O_s(P_y)$ (since $P_y =\Omega_s(d^{-\eta})$). This establishes Equation~\ref{xyprob}.

    \begin{proof}[Proof of Claim~\ref{almost regular, cleanup}]
    
    For $x\in V(G)$, write $Y_{\text{heavy}}(x)\subset N(x)$ to be the set of $y\in N(x)$ with $|N(y) \cap N(x)|\ge d_0^{1-1/(1000s)}$. Noting $d_0^{1-1/1000s}\ge |N(x)|^{1-1/100s}$ (recall $|N(x)|\le d_0^{1+\eps_0}$ and $\eps = 1/(5000s^2)$), we must have that $|Y_{\text{heavy},x}|\le \sqrt{|N(x)|}\le d_0^{2/3}$ by Lemma~\ref{too dense theta} (since $G_0$ has not $K_k$ and is $K_{s,t}$-free).

    Now $W$ sample vertices with probability $d_0^{-9/10}$, and let \[Z_1 := \{x\in W: d_W(x) > 1+ 10d_0^{1/10+\eps_0}\}\]
    \[Z_2 := \{x\in W: |N(x)\cap N(y)\cap W|> 1+2d_0^{1/10-1/(1000s)} \textrm{ for some }y\in N(x) \cap W\}.\]
    Now take $W' := W \setminus (Z_1\cup Z_2)$ and $G := G_0[W']$. Deterministically we have $\Delta(G)\le 11d_0^{1/10+\eps_0}$ (which is less than $d_0^{1/10(1+20\eps_0)}$ for large $d_0$) and that $|N_G(x)\cap N_G(y)|\le 3d_0^{1/10(1-1/(1000s))}$ for adjacent $x,y\in W'$. So we just seek an outcome with the right average degree.

    Clearly, we have $\E[|W'|]\le \E[|W|] = nd_0^{-9/10}$. The result will now follow if we can get a good lower bound on $\E[e(G)]$.

    Let $E_{\text{light}}$ be the set of edges $e = \{x,y\}$ where $|N(x)\cap N(y)|< d_0^{1-1/(1000s)}$. It is not hard to see that $|E_{\text{light}}|\ge e(G_0)\ge nd_0/4$ (since each $x$ belongs to exactly $|Y_{\text{heavy},x}|\le d_0^{2/3}$ edges not in $E_{\text{light}}$). For every $e= \{x,y\}\in E_{\text{light}}$, we claim that
    \begin{equation}
        \P(\{x,y\}\subset W' | \{x,y\}\subset W)\ge 1/2.\label{edge retention}
    \end{equation}This will imply that $\E[e(G)] \ge \frac{(d_0^{-9/10})^2}{2}|E_{\text{light}}| > \frac{d_0^{1/10}}{10}\E[|W'|]$, so there is an outcome of $G$ with average degree $\ge d_0^{1/10}/10$, as desired.

    To establish Equation~\ref{edge retention}, we just consider $e=\{x,y\}\in E_{\text{light}}$. Conditioning on $\{x,y\}\subset W$, we will control the probability that $\{x,y\}\cap (Z_1\cup Z_2)$ is non-empty.

    Controlling $Z_1$ is easy. By Markov's inequality (in a similar fashion to in Lemma~\ref{biregular to regular}), we have that $\P(x\in Z_1|\{x,y\}\in W)\le \frac{\E[|W\cap (N_{G_0}(x)\setminus \{y\})|]}{10d_0^{1/10+\eps_0}}<1/10$.

    Meanwhile for $Z_2$, we have that \[\P(x\in Z_2|\{x,y\}\subset W)\le \sum_{y' \in N_{G_0}(x)} \P(y'\in W)\P(|(N(x)\setminus \{y\})\cap N(y')\cap W|>2d_0^{1/10-1/(1000s)}).  \]Since $|Y_{\text{heavy}}(x)|\le d_0^{2/3}$, and the contribution from those vertices in the above sum is at most $d_0^{2/3}d_0^{-9/10}<1/20$. Meanwhile, for any $y'\in N_{G_0}(x)\setminus Y_{\text{heavy}}(x)$, we have that $\P(|(N(x)\setminus\{y\})\cap N(y')\cap W|\ge 2d_0^{1/10-1/(1000s)}) \le \P(\operatorname{Bin}(m,p)\ge 2mp)$ with $m:= d_0^{1-1/(1000s)},p:= d_0^{-9/10}$. By a Chernoff bound, we have that $\P(\operatorname{Bin}(m,p)\ge 2mp)\le \exp(-mp/3)$ for arbitrary $m\ge 1,p\in (0,1)$. So for large $d_0$, we have that $\exp(-d_0^{1/10-1/(1000s)})<d_0^{-(1+\eps)}/20$. Whence, the contribution from the other terms is at most $1/20$.

    So altogether $\P(x\in Z_1\cup Z_2|\{x,y\}\subset W)\le 1/5$. And by symmetry the same should hold for $y$. Thus $\P(\{x,y\}\subset W'|\{x,y\}\subset W) \ge 1-\P(\{x,y\}\cap (Z_1\cup Z_2)|\{x,y\}\subset W)\ge 3/5\ge 1/2$. This completes the proof.
    \end{proof}
\end{proof}

\hide{
\subsection{Remove 4-cycles}

\hide{\begin{lemma}\label{Lem: C_4}
Let $\varepsilon\leq 1/18$. 
Let $H$ be a graph with clique number $k$ and $\delta(G)=d\geq C(t)k^{100t}$.
Suppose furthermore that $\Delta(H)\leq d^{1+\varepsilon}$ and $H$ does not contain an induced $K_{2,t}$. Then, $H$ contains a $C_4$-free induced subgraph $H'$ with $d(H')\geq d^{1/10}$.
\end{lemma}
\begin{proof}
Let us count the number of $C_4$'s in $H$. We define \[s_1=\#((x,y,z,w): xy, yz,zw,wx\in E(H) \text{ and } G[\{x,w,z,w\}]\neq K_4)\] and define $s_{K_4}$ to be the number of $K_4$'s in $H$.  
A simple double counting shows that the total number of ordered $C_4$'s is at most $4!(s_1+s_{K_4})$. 

\begin{claim}
$s_1\geq \frac{s_{K_4}}{k}-d^{1+2\varepsilon}k n$. 
\end{claim}
\begin{proof}
    Every $K_4$ can be counted in the following way. First, we fix a vertex $x\in V(G)$, for which there are $n$ ways. Now, we choose another vertex $y\in N(x)$, for which there are at most $d^{1+\varepsilon}$ ways. Finally, it remains to count the number of edges in $G[N(x)\cap N(y)]$. Let $|N(x)\cap N(y)|=m$. Observe that since $G$ is $K_{k+1}$-free, we have by Tur\'an's Theorem  $e(G[N(x)\cap N(y)])\leq (1-1/2k) \binom{m}{2}$ and therefore the number of non-edges within $N(x)\cap N(y)$ is at least $1/2k$-proportion of the number of edges and hence $s_1\geq \frac{s_{K_4}}{k}-d^{1+2\varepsilon}k^2 n $. 
\end{proof}

Let us assume for now that the number of ordered $C_4$'s is at least $d^{43/18}n$. It follows from the above claim that $s_1\geq d^{42/18}$ (using that $k\leq d^{1/18}$). 
As there are at most $nd^{2+2\varepsilon}$ non-edges with a common neighbour, a double counting shows that there is a non-edge $(x,y)$ for which $|N(x)\cap N(y)|\geq d^{6/18-2\varepsilon} $ . As $\varepsilon\leq 1/16$,  $|N(x)\cap N(y)|\geq k^{5t}$ and so by Ramsey's Theorem $N(x)\cap N(y)$ must span an independent on at least $t$ vertices which an induced $K_{2,t}$. 
Therefore, we may assume that the number of ordered $C_4$'s is at most $d^{43/18}n$. Now, let $p=d^{-7/8}$ and $G_p$ be a random induced sub-graph of $G$ where each vertex is chosen with probability $p$ independently. Consider the following random variables. 
Let $X=V(G_p)$, $Y=e(G_p)$, $Z= \# \{\text{edges forming a $C_4$'s in $G_p$} \}$ and $W=\# \{(C,e): \text{ $C$ is a $C_4$ and $e$ is an edge incident with a vertex in $V(C)$ and the other in $V\setminus V(C)$}\}$. 
An easy calculation shows that $\mathbb{E}[X]=pn$, $\mathbb{E}[Y]\geq dp^2n$, $\mathbb{E}[Z]\leq 4d^{43/18}p^4$ and finally $\mathbb{E}[W]\leq 4d^{43/18}d^{1+\varepsilon}p^5$. Therefore,
\[
\mathbb{E}[Y-Z-W-d^{1/10}X]\geq dp^2n-4d^{43/18}p^4-4d^{43/18}d^{1+\varepsilon}p^5 - d^{1/10}np>0.
\]
Fix a choice for which the above is positive. 
Hence, by deleting a vertex incident with every $C_4$ in $G_p$ we have that the remaining graph still has $d^{1/10}|V(G_p)|$ edges and hence average degree at least $d^{1/10}$ and no $C_4$'s. 
\end{proof}
I guess the important lemma is the following. 

\begin{lemma}
    Let $d,k$ be positive integers and $s \geq t$ where $d\geq C(s)k^{100t}$. Let $G$ be a graph on $d \leq n\leq d^{1+1/100t}$ vertices. Moreover, suppose that $G$ has no clique of size $k$ and no induced $K_{s,t}$. Then, $e(G)\leq nd^{2-1/10t}$. Therefore, $G$ contains an independent of size $d^{1/20t}$. 
\end{lemma}
For general graphs $H$, we cannot hope to find an induced copy of $K_{s,t}$. Indeed, we can have sparse graphs of large average degree, that do not even any $C_4$'s as subgraphs!

But we can use the following trick. If a graph does have multiple triangles containing a shared edge, then there do now exist $C_4$'s (meaning we can't be totally sparse, like in the last example). Thus, we shall establish the following dichotomy: either $H$ does not have too many triangles, in which case we can pass to a truly sparse triangle-free graph, or there are many triangles, which will allow us to find an induced $K_{s,t}$  
\begin{lemma}\label{Lem:remove triangles}
    Let $s\ge 2$, and take $\eps \le 1/20s$. Furthermore, consider $t\ge 2s^2$ and $k$ so that $\binom{k+s}{s}\le k^s/6$ and 

    Let $H$ be a graph with no $k$-clique and no induced $K_{s,t}$, such that \[\delta(H) =d \ge C k^{100t}\quad \text{and}\quad \Delta(H)\le d^{1+\eps}.\] Then $H$ contains a $C_4$-free induced subgraph $H'$ with $d(H')\ge d^{1/10s}$.
    \begin{proof}
        For $r\ge 1$, let $\XX_r\subset \binom{V(H)}{r}$ denote the set of $r$-subsets, $X$, whose common neighborhood $N(X):=\bigcap_{x\in X} N(x)$ is non-empty. Also, let's further write $\II_r\subset \XX_r$ to denote the set of $X\in \XX_r$ where $H[X] = \emptyset$ (meaning $X$ is an independent set of size $r$).

        Let $\CC_4$ be the set of $4$-cycles $C\subset H$. We shall consider two cases.

        Write $\delta := 1/20s$.
        
        \textbf{Case 1} ($|\CC_4|\ge nd^{3-\delta}$):

        Note that 
        \[nd^{3-\delta} \le |\CC_4|\le \sum_{X\in \XX_2}|N(X)|^2.\]Now, we have that $|\XX_2|\le nd^{2(1+\eps)}$ and $|N(X)|\le d^{1+\eps}$ for each $X\in \XX_2$. Thus, writing $\XX_2' \subset \XX_2$ to denote the $2$-subsets $X$ with $|N(X)| \ge d^{1-(\delta+2\eps)}/2$, we get that
        \[\sum_{X\in \XX_2'} |N(X)|^2\ge nd^{3-\delta}/2.\]By convexity, we have that
        \[\sum_{X\in X_2'} |N(X)|^s\ge nd^{2(1+\eps)} (d^{1-(\delta+2\eps)})^{s-1}2^{-(s+1)}\ge nd^{s+1-s(\delta+2\eps)}2^{-(s+1)}.\]

        Applying Corollary~\ref{supersat}, for each $X\in \XX_2'$ we get that
        \[\frac{\#(I\in \II_s: I\subset N(X))}{|N(X)|^s} \ge \frac{1}{\binom{k+s}{s}^s} \ge 2^{s+1}d^{-1/2}\](since our assumptions on $d,s,t$ imply $d^{1/2} \ge 2^{s+1}\binom{k+s}{s}^s$). By double counting, we get that
        \[\sum_{I\in \II_s} |N(I)|^2 \ge \sum_{X\in \XX_2} \#(I\in \II_r: I\subset N(X)) \ge nd^{s+1-s(\delta+2\eps)-1/2}.\]

        Next, we have that $|\II_s|\le |\XX_s| \le nd^{s(1+\eps)}$. So by pigeonhole, we have that
        \[|N(I)|^2 \ge d^{1/2-s(\delta+3\eps)}\ge d^{1/2-1/5}= d^{3/10}\]for some $I\in \II_s$. So, we get that $|N(I)| \ge d^{3/20} \ge \binom{k+t}{t}$, implying $H[N(I)]$ contains an independent set $J$ of size $t$. Whence, $H[I\cup J] \cong K_{s,t}$. This contradicts the initial assumption.

        \textbf{Case 2} ($|\mathcal{T}| <nd^{2-\delta}$): Let $\mathcal{S}$ be the set of pairs $(e,C)$ such that $C\in \mathcal{C}_4$ and $e$ is an edge which intersects $V(T)$.

        Clearly, we have $|\mathcal{S}|\le 4d^{1+\eps}|\mathcal{C}_4|$. *just do a standard deletion argument*
    \end{proof}
\end{lemma}}

For general graphs $H$, we cannot hope to find an induced copy of $K_{s,t}$. Indeed, we can have sparse graphs of large average degree, that do not even any $C_4$'s as subgraphs!

But we can use the following trick. If a graph does have $C_4$'s containing a shared edge, then there do now exist $C_4$'s (meaning we can't be totally sparse, like in the last example). Thus, we shall establish the following dichotomy: either $H$ does not have too many triangles, in which case we can pass to a truly sparse triangle-free graph, or there are many triangles, which will allow us to find an induced $K_{s,t}$  

\begin{lemma}\label{Lem:remove triangles}
    Let $s\ge 2$, and take $\eps \le 1/40s$. Furthermore, consider $t\ge s$ and $k$ so that $\binom{k+s}{s}\le k^s/6$ and 

    Let $H$ be a graph with no $k$-clique and no induced $K_{s,t}$, such that \[\delta(H) =d \ge C k^{100t}\quad \text{and}\quad \Delta(H)\le d^{1+\eps}.\] Then $H$ contains a $C_4$-free induced subgraph $H'$ with $d(H')\ge d^{1/10s}$.
    \begin{proof}
        For $r\ge 1$, let $\XX_r\subset \binom{V(H)}{r}$ denote the set of $r$-subsets, $X$, whose common neighborhood $N(X):=\bigcap_{x\in X} N(x)$ is non-empty. Also, let's further write $\II_r\subset \XX_r$ to denote the set of $X\in \XX_r$ where $H[X] = \emptyset$ (meaning $X$ is an independent set of size $r$).

        Let $\CC_4$ be the set of $4$-cycles $C\subset H$. We shall consider two cases.

        Write $\delta := 1/20s$.
        
        \textbf{Case 1} ($|\CC_4|\ge nd^{3-\delta}$):

        Note that 
        \[nd^{3-\delta} \le |\CC_4|\le \sum_{X\in \XX_2}|N(X)|^2.\]Now, we have that $|\XX_2|\le nd^{2(1+\eps)}$ and $|N(X)|\le d^{1+\eps}$ for each $X\in \XX_2$. Thus, writing $\XX_2' \subset \XX_2$ to denote the $2$-subsets $X$ with $|N(X)| \ge d^{1-(\delta+2\eps)}/2$, we get that
        \[\sum_{X\in \XX_2'} |N(X)|^2\ge nd^{3-\delta}/2.\]By convexity, we have that
        \[\sum_{X\in X_2'} |N(X)|^s\ge nd^{2(1+\eps)} (d^{1-(\delta+2\eps)})^{s-1}2^{-(s+1)}\ge nd^{s+1-s(\delta+2\eps)}2^{-(s+1)}.\]

        Applying Corollary~\ref{supersat}, for each $X\in \XX_2'$ we get that
        \[\frac{\#(I\in \II_s: I\subset N(X))}{|N(X)|^s} \ge  \Omega_s(1)\ge 2^{s+1}d^{-1/2}\](since our assumptions on $d,s,t$ imply $|N(X)|\ge k^{100t}$  and $d^{1/2}$ is arbitrarily large). By double counting, we get that
        \[\sum_{I\in \II_s} |N(I)|^2 \ge \sum_{X\in \XX_2} \#(I\in \II_r: I\subset N(X)) \ge nd^{s+1-s(\delta+2\eps)-1/2}.\]

        Next, we have that $|\II_s|\le |\XX_s| \le nd^{s(1+\eps)}$. So by pigeonhole, we have that
        \[|N(I)|^2 \ge d^{1/2-s(\delta+3\eps)}\ge d^{1/2-1/8}= d^{3/8}\]for some $I\in \II_s$. So, we get that $|N(I)| \ge d^{3/16} \ge \binom{k+t}{t}$, implying $H[N(I)]$ contains an independent set $J$ of size $t$. Whence, $H[I\cup J] \cong K_{s,t}$. This contradicts the initial assumption.

        \textbf{Case 2} ($|\CC_4| <nd^{3-\delta}$): Let $\mathcal{S}$ be the set of pairs $(e,C)$ such that $C\in \mathcal{C}_4$ and $e$ is an edge which intersects $V(T)$.

        Clearly, we have $|\mathcal{S}|\le 4d^{1+\eps}|\mathcal{C}_4|<4nd^{4-1/40s}$. Now a random set $V'\subset V(G)$ where each vertex is included with probability $p = d^{1/100s-1}$ (independently). We let $\CC'$ be the set of $C\in \CC_4$ where $V(C)\subset V'$. 
        
        Finally we consider \[ Y:= \E[e(G[V'])], \quad Y:=\E[|\CC'|],\quad Z:= \E[\#((e,C)\in \mathcal{S}:e\in E(G[V'])\text{ and } C\in \CC') .\] Let $H'$ be the induced subgraph on vertex set $V' \setminus \bigcup_{C\in \CC'}V(C)$. It is clear that $e(H')\ge X-6Y-Z$ and $d(H')$. 

        Now, we simply calculate that
        \[\E[X] = n(d/2)p^2= \frac{1}{2}d^{1/100s}(np),\quad \E[Y]=|\CC_4|p^4\le d^{3/100s-1/20s}(np),\quad \E[Z]= |\mathcal{S}|p^5\le d^{4/100s-1/40s}(np) .\]Thus (assuming $d$ is sufficiently large), we clearly have
        \[\E[X-6Y-Z] \ge d^{1/200s}(np)= d^{1/200s}\E[|V'|],\]whence there is some outcome where $e(H')\ge d^{1/200s}|V(H')|$, giving us out $C_4$-free graph with average degree $d^{1/200s}$.
    \end{proof}
\end{lemma}

}
\section{Proof of Theorem~\ref{thm: main}}

\subsection{Avoiding $K_{2,2}$}

We first go through the proof for the case of $s=t=2$ (i.e. $G$ has no induced $K_{2,2}$). This result will allow us to finish off our proof of the general case by passing to some induced $G'$ that contains no $K_{2,2}$. 
\begin{proposition}\label{base case}
    For all sufficiently large $k$, the following holds.
    Let $G$ be a $C_4$-free graph with average degree $d:=d(G)\ge k^{2500}$. Furthermore, suppose $G$ has no $K_k$. Then, $G$ contains an induced (proper) balanced subdivision of $K_h$, where $h\ge d^{1/50000}$. 
\end{proposition}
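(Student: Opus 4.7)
The plan is to apply the dichotomy of Lemma~\ref{Lem:dichotomy} and, in each branch, extract an induced $1$-subdivision of a (necessarily simple) graph $H$ with large average degree; we then feed $H$ to Theorem~\ref{balanced subdivision} and pull back via Observation~\ref{subdivision subgraphs are induced}, using that the $1$-subdivision of a balanced subdivision of $K_{h'}$ is itself a balanced subdivision of $K_{h'}$ (with its length doubled), so the pullback is automatically of the shape we want.

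After passing to a subgraph of $G$ with minimum degree $\Omega(d)$, we invoke Lemma~\ref{Lem:dichotomy} with $L$ a large polynomial in $d$; concretely, taking $L \geq 80 d^{4}$ ensures the ratio hypothesis of Lemma~\ref{Lem:unbalanced} with $s=2$. In the unbalanced branch, Lemma~\ref{Lem:unbalanced} directly produces an induced $1$-subdivision of a $2$-uniform multihypergraph $\HH$ with $d(\HH) \geq d$; since $G$ is $C_4$-free (equivalently, $K_{2,2}$-free), no hyperedge of $\HH$ is repeated, so $\HH$ may be treated as a simple graph of average degree at least $d$. Theorem~\ref{balanced subdivision} then yields a balanced subdivision of $K_{h'}$ with $h' = \Omega(\sqrt{d})$, comfortably above the target $d^{1/50000}$.

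In the almost-regular branch, we obtain an induced subgraph $G' \subseteq G$ with $d(G') \geq d/\operatorname{polylog}(d)$ and $\Delta(G') \leq d(G') \cdot \operatorname{polylog}(d)$, still $C_4$-free and $K_k$-free. Here the plan is to rerun the probabilistic construction in the proof of Lemma~\ref{almost regular} (with $s=t=2$) directly on $G'$, rather than citing that lemma as a black box. The crucial saving is that the cleaning step of its proof---Claim~\ref{almost regular, cleanup}, which enforces a codegree bound at the cost of replacing the scale $d_0$ by $d_0^{1/10}$---is vacuous in our setting: $C_4$-freeness already gives $|N(x) \cap N(y)| \leq 1$ for every adjacent pair $x,y$. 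Skipping this cleaning, the remaining sampling argument produces an induced $1$-subdivision of a $2$-uniform multihypergraph $\HH$ with $d(\HH) \geq d^{c}$ for some absolute constant $c > 1/10000$; again $C_4$-freeness forces $\HH$ to be simple, and Theorem~\ref{balanced subdivision} together with Observation~\ref{subdivision subgraphs are induced} furnishes the desired induced balanced subdivision of $K_{h'}$ in $G$ with $h' = \Omega(\sqrt{d(\HH)}) \geq d^{1/50000}$.

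The main obstacle is the almost-regular branch: one cannot simply cite Lemma~\ref{almost regular}, because the cleaning step hidden inside its proof costs a factor of roughly ten in the final exponent, which would leave us with only $d^{1/160000}$, below target. The delicate part is to verify that $C_4$-freeness suffices to render that cleaning step redundant while leaving the rest of the probabilistic argument intact, thereby improving the exponent by the factor of ten needed to meet the bound $d^{1/50000}$.
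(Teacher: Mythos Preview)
Your proposal follows the same route as the paper: apply Lemma~\ref{Lem:dichotomy} with $L=80d^4$, invoke Lemma~\ref{Lem:unbalanced} in the unbalanced branch and (the argument of) Lemma~\ref{almost regular} in the almost-regular branch, then finish via Corollary~\ref{subdivision reduction}. The one place you diverge is that the paper simply cites Lemma~\ref{almost regular} as a black box, whereas you re-run its proof and skip Claim~\ref{almost regular, cleanup}, correctly observing that $C_4$-freeness already forces $|N(x)\cap N(y)|\le 1$ so the codegree cleaning is vacuous. You are right that this saves a factor of roughly ten in the final exponent, and that the black-box citation taken literally would fall short of the stated bound $d^{1/50000}$; the paper does not track this discrepancy, in keeping with its remark that the constants are not optimised. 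Your version is thus a more careful execution of the same argument rather than a genuinely different approach.
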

\begin{proof}
    By our reductions (specifically Corollary~\ref{subdivision reduction}), it suffices to find an induced subgraph $G'\subset G$, which is the $1$-subdivision of some multihypergraph $\mathcal{H}$ with average degree $d(\mathcal{H})>Ch^2$ (here $C$ is some absolute constant).

    We may assume $G$ is $d$-degenerate, otherwise we can pass to some subgraph with higher average degree. We now invoke our dichotomy result (Lemma~\ref{Lem:dichotomy}) with $L:= 80d^4$. 
    
    Suppose the first outcome holds. Then, there exist disjoint sets $A,B$ with $|A|>40d^4|B|$ and $e(G[A,B])\ge e(G)/4$. We may now finish off by invoking  Lemma~\ref{Lem:unbalanced} and obtain the desired $G'$. 

    Otherwise, the second outcome holds. We may then pass to an induced subgraph $G^*$ with $d^*:=d(G^*)\ge \Omega(d/\log(d^4))$ and $\Delta(G^*)=O(d^*\log(d^4))$. Assuming $k$ (and thus $d$) is sufficiently large, we get that $d^*\ge k^{2000}$ and $\Delta(G^*)\le (d^*)^{1+1/200000}$. Finally, we apply Lemma~\ref{almost regular} to find our $G'$ (since we may assume $d^*\ge d^{4/5}$).
\end{proof}

\begin{remark}
    There is actually quite a lot of flexibility with how one chooses $L$. As long as $L<2^{d^{o(1)}}$, in the second outcome of Lemma~\ref{Lem:dichotomy} we are sufficiently close to being regular for the same proof to follow through. 
\end{remark}

\subsection{General case}

We now seek to prove the general case. Here, we need an argument to rule out the case when $G$ is `polynomially dense'. This will be handled by invoking Lemma~\ref{too dense theta}.

\begin{proof}[Proof of Theorem~\ref{thm: main}]
    Write $d := d(G)\ge k^{4000t}$.

    Our goal will be to find an induced subgraph $G'\subset G$ where either:
    \begin{itemize}
        \item $G'$ is the $1$-subdivision of an $s$-uniform multihypergraph $\HH$ with $d(\HH)\ge d/100$;
        \item $G'$ is $C_4$-free with $d(G')\ge d(G)^{\frac{1}{20000s}}$. 
    \end{itemize}\noindent If the first bullet happens, then we are done by Corollary~\ref{subdivision reduction}. And if the second bullet happens, then we are done by Proposition~\ref{base case}.

    Again, we can assume that $G$ is $d$-degenerate, otherwise we could pass to a subgraph of higher average degree. We then apply our dichotomy (namely Lemma~\ref{Lem:dichotomy}) with $L \coloneqq d^{s+3}$.

    Suppose the first case of the dichotomy holds. Then, we can obtain a $G'$ satisfying the first bullet by applying Lemma~\ref{Lem:unbalanced}.

    Meanwhile, if the second case of the dichotomy holds, we have found an induced subgraph $G^*\subset G$ with $d^*:= d(G^*)\ge d^{1/2}$ and $\Delta(G^*)\le (d^*)^{1+1/(500s)}$. Now applying Lemma~\ref{Lem: denseorC_4} to $G^*$ (with $\eps:= 1/500s$), we either get an induced $G'\subset G^*$ satisfying the second bullet or an induced subgraph $G^{**}$ on $n^{**}\ge (d^{*})^{1/2}$ vertices with $d^{**} \ge (n^{**})^{1-1/(100s)}$. But this latter outcome is impossible by Lemma~\ref{too dense theta} (as we are assuming $G$ is $K_{s,t}$-free and has no $K_k$). Thus we see that we must find one of the desired $G'$, completing the proof.
\end{proof}

\section{Theorem~\ref{thm: main1} and Theorem~\ref{thm:main2}}

\subsection{Bounded induced subdivisions}
We proceed as in the proof of Theorem~\ref{thm: main} except that now things are a bit more involved. 

First, we need an analogue of Lemma~\ref{too dense theta},  which will again allow us to ``win'' if we can pass to an induced subgraph $G'$ with $n'\ge d^{\Omega(1)}$ vertices where $d(G')\ge (n')^{1-\eps}$ for an appropriately small choice of $\eps$.

\begin{lemma}\label{one sided erdos hajnal}
    Fix a bipartite graph $H = (A,B,E)$. Write $\eps_H:= \frac{1}{100 \Delta(H)}$ and $C^*_H := 100|A|+10|A|^2+2|B|$. Then the following holds for all sufficiently large $s$ with respect to $|H|$. 

    Let $G$ be a graph with $n\ge s^{C^*_H},d(G)\ge n^{1-\eps_H}$, which has no $K_{s,s}$ (as a subgraph). Then $G$ contains an induced copy of $H$.
\end{lemma}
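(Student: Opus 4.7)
My plan is to prove Lemma~\ref{one sided erdos hajnal} by a two-phase random embedding: first embed the $A$-vertices of $H$ into $V(G)$, then embed the $B$-vertices, using $K_{s,s}$-freeness of $G$ and (bipartite) K\H{o}v\'ari--S\'os--Tur\'an bounds at each step.

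\smallskip

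\noindent\textbf{Phase 1 (embedding $A$).} I pick $(v_a)_{a\in A}$ uniformly and independently from $V(G)$. First, $\P(\{v_a\}\text{ is not independent in }G) \le \binom{|A|}{2}\rho(G)$, and since $G$ is $K_{s,s}$-free, K\H{o}v\'ari--S\'os--Tur\'an gives $\rho(G) = O(n^{-1/s})$, yielding probability $1-o(1)$ of obtaining an independent set. Second, for each $b \in B$, the candidate set $W_b := \{w \in V(G) : w \sim v_a \Leftrightarrow a \in N_H(b), \forall a \in A\}$ satisfies
\[
\E[|W_b|] \;=\; \sum_{w \in V(G)} p_w^{|N_H(b)|}(1-p_w)^{|A|-|N_H(b)|},\qquad p_w := d_G(w)/n.
\]
A careful power-mean argument, partitioning vertices by degree and noting that very-high-degree vertices contribute negligibly (since $(1-p_w)^{|A|-|N_H(b)|}\approx 0$) while moderate-degree vertices dominate, gives $\E[|W_b|] \ge n^{1 - \Delta(H)\eps_H - o(1)} \ge n^{99/100 - o(1)}$. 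By Markov plus a union bound over $b \in B$, I find a realization of $(v_a)$ such that $\{v_a\}$ is independent and each $|W_b|$ is simultaneously polynomially large in $n$.

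\smallskip

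\noindent\textbf{Phase 2 (embedding $B$).} Given the $v_a$'s and the large $W_b$'s, I pick $v_b \in W_b$ uniformly and independently, aiming for $\{v_b\}$ to be an independent set in $G$, disjoint from $\{v_a\}$. The key tool is the bipartite K\H{o}v\'ari--S\'os--Tur\'an bound applied to $G[W_b, W_{b'}]$, which inherits $K_{s,s}$-freeness as a bipartite subgraph and so satisfies $e(W_b, W_{b'}) \le O(|W_b|^{1-1/s}|W_{b'}| + |W_{b'}|)$, giving bipartite density $O(|W_b|^{-1/s})$. The expected number of ``bad'' adjacent pairs $v_b v_{b'} \in E(G)$ is then $O(|B|^2 |W_b|^{-1/s})$, which is $o(1)$ provided $|W_b|^{1/s} \gg |B|^2$. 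Hence with positive probability we obtain a valid induced embedding of $H$.

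\smallskip

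\noindent\textbf{Main obstacle.} The most delicate technical point is the Phase 1 bound $\E[|W_b|] \ge n^{99/100 - o(1)}$: one has to balance contributions from vertices of wildly different degrees, using that $K_{s,s}$-freeness (via KST) rules out too many ``atypical'' high-degree vertices, and a power-mean inequality handles the moderate-degree bulk. The constant $C^*_H = 100|A|+10|A|^2+2|B|$ is engineered to make everything fit: the $100|A|$ and $10|A|^2$ terms underwrite Phase 1 (absorbing the power-mean slack and auxiliary Ramsey-style estimates used to control the degree distribution), while the $2|B|$ term forces $n \ge s^{C^*_H}$ large enough that the bipartite density estimate of Phase 2 beats the $\binom{|B|}{2}$ union-bound factor, closing the argument.
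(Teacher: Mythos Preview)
Your approach has a fundamental gap in both phases: the K\H{o}v\'ari--S\'os--Tur\'an bounds you invoke are essentially vacuous in the regime of the lemma. KST gives $\rho(G) = O_s(n^{-1/s})$, but the lemma must hold for $n$ as small as $s^{C^*_H}$, in which case $n^{-1/s} = s^{-C^*_H/s} = \exp\!\big(-C^*_H(\log s)/s\big) \to 1$ as $s\to\infty$. So your Phase~1 claim that $\P(\{v_a\}\text{ not independent}) \le \binom{|A|}{2}\rho(G) = o(1)$ simply fails: the density of $G$ can be arbitrarily close to $1$ (indeed it is at least $n^{-\eps_H}$ by hypothesis). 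The same defect wrecks your Phase~2 bipartite-KST estimate, since $|W_b|^{-1/s}$ is likewise not small. Your ``power-mean'' sketch for $\E[|W_b|]$ is also not clearly salvageable --- you need the $(1-p_w)^{|A|-|N_H(b)|}$ factor to be bounded below, but for a positive proportion of vertices $p_w$ may be $1-o(1)$ --- though even granting it, the independence steps cannot be repaired via KST.

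The paper's proof never appeals to KST density. It uses instead the sharper pointwise fact that $K_{s,s}$-freeness forces, for every set $W$ with $|W|\ge 2s$, fewer than $s$ vertices $x$ to satisfy $|W\setminus N(x)|\le |W|/(2s)$. After applying dependent random choice to locate a set $A^*$ of size $\ge n^{1/3}$ in which every $\Delta(H)$-subset has $\ge n^{9/10}$ common neighbours, the $A$-side is sampled from $A^*$; independence then holds with probability at least $(4s)^{-|A|^2}$ --- not $1-o(1)$ --- and this suffices because the total failure probability of the events ``$|Y_S|$ too small'' is only $O(n^{-1/4})$, and $n\ge s^{10|A|^2}$ makes $(4s)^{-|A|^2}\gg n^{-1/4}$. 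The $B$-side is then embedded vertex by vertex, with the observation above guaranteeing that each candidate set $Y_S$ shrinks by at most a factor of $2s$ per step; this is precisely what the constants $100|A|$ (for $n^{1/10}\ge (2s)^{|A|}$) and $2|B|$ in $C^*_H$ are calibrated to absorb, while the $10|A|^2$ term pays for the independence probability on the $A$-side.
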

\begin{remark}
    This result can be viewed as a \textit{one-sided} Erd\H{o}s-Hajnal result (in the same spirit as a result of Fox and Sudakov~\cite[Theorem~1.8]{FoxSud}). Our proof gives better bounds (\cite{FoxSud} required $n>\Omega_H(s^{\Omega(|H|^3|)})$) and has a weaker assumption (they require that $G$ lacks independent sets of size $n^{\Omega(1)}$, which we have replaced by an average degree condition).\hide{\zh{okay? or is this rude?}}
\end{remark}
\begin{proof}
    Write $a:= |A|$, and $\Delta:= \Delta(H)$.

     First, note that we can find $A' \subset V(G)$ with $|A'| = \sqrt{n}$ so that every $x\in A'$ has $d_G(x)\geq n^{1-2\varepsilon_H}$. By Lemma~\ref{DRC}, we must have a set $A^*\subset A'$ where every $\Delta$-subset inside $A^*$ has at least $ n^{9/10}$ common neighbors inside $V(G)$ and $|A^*|\geq n^{1/3}$. Set $B^*:= V(G)\setminus A^*$.
    We need now a simple observation. 

    \begin{observation}\label{Obs1}
        For every set $W\subset V(G)$ with $|W|\ge 2s$, we have that $\#\{x\in V(G): |W\setminus N(x)|\le |W|/2s\} <s$.
    \end{observation}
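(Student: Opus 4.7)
The plan is to argue by contradiction: assume there exist $s$ distinct vertices $x_1,\dots,x_s\in V(G)$ with $|W\setminus N(x_i)|\le |W|/2s$ for every $i$, and deduce that $G$ must contain a $K_{s,s}$ subgraph, contradicting the hypothesis of Lemma~\ref{one sided erdos hajnal}.

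First I would use a union bound to lower bound the common neighborhood in $W$. Setting $W^*:=W\cap\bigcap_{i=1}^{s}N(x_i)$, we have
\[
|W^*|\ge |W|-\sum_{i=1}^{s}|W\setminus N(x_i)|\ge |W|-s\cdot\frac{|W|}{2s}=\frac{|W|}{2}\ge s,
\]
where the final inequality uses the standing assumption $|W|\ge 2s$. Next I would observe that none of the $x_i$ lies in $W^*$, since $x_i\notin N(x_i)$ in a simple graph, so $\{x_1,\dots,x_s\}\cap W^*=\emptyset$. Picking any $s$ vertices $y_1,\dots,y_s\in W^*$, these are automatically disjoint from $\{x_1,\dots,x_s\}$, and by construction every $y_j$ is adjacent to every $x_i$. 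Hence $\{x_1,\dots,x_s\}$ and $\{y_1,\dots,y_s\}$ form the two sides of a (not necessarily induced) $K_{s,s}$ in $G$, contradicting the assumed $K_{s,s}$-freeness.

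There is no real obstacle here; the observation is a short application of the union bound combined with the loop-free nature of $G$. What makes it genuinely useful in the context of Lemma~\ref{one sided erdos hajnal} is the contrapositive reading: for any fixed $W\subset V(G)$ with $|W|\ge 2s$, all but at most $s-1$ vertices $x\in V(G)$ miss a nontrivial proportion (at least $|W|/2s$) of $W$ in their non-neighborhood. This is precisely the kind of quantitative control one needs in order to greedily extract, inside the large common-neighborhood set $A^*$ produced via dependent random choice, an induced copy of the bipartite graph $H$ while dodging the forbidden $K_{s,s}$.
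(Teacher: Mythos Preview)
Your proof is correct and is precisely the argument the paper has in mind; the paper's own proof is the single sentence ``This clearly holds as we do not have any $K_{s,s}$,'' and you have spelled out the union-bound details behind it.
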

  \noindent This clearly holds as we do not have any $K_{s,s}$.

    Now pick $x_1,\dots,x_a\in A^*$ independently and uniformly at random. For $S \subset [a]$, let \[Y_S := \{y\in V(G): x_i\in N(y) \iff i\in S\quad \text{for }i\in [a]\}.\]Whenever $|S|\le \Delta$, we will argue that with very high probability, $|Y_S|\ge n^{9/10}/(2s)^{a-|S|}$, for every $S\subset [a]$. To start, observe that $Y_S$ has the same distribution as $Y_{\{1,\dots,|S|\}}$ for every $S\subset [a]$. 
    
    Now fix $0\le \ell \le \Delta$. For $t=1,2,\dots,a$, define
    \[Y^{(t)} =Y_{[\ell]}^{(t)} := \{y \in V(G): y \in N(x_i) \iff i\le \ell \quad\text{for }i\in [t]\}.\]
    By assumption on $A^*$, we have $|Y^{(\ell)}| \ge n^{9/10}$ with probability $1$. Next, for all $t\in \{\ell,\dots, a-1\}$, conditioned on the event $\EE_t$ that $|Y^{(t)}|\ge 2s$, we have that $\P(|Y^{(t+1)}|< |Y^{(t)}|/2s ) < s/|A^*| \le n^{-1/4}$ by Observation~\ref{Obs1}. So iterating through each $\ell\le t<a$, we see that $\P(|Y^{(a)}|\ge n^{9/10}/(2s)^{a-\ell}) \ge (1-n^{-1/4})^{a-\ell}\ge 1-a n^{-1/4}$.
    
    So by a union bound, we get that \[\P\left (|Y_S|< n^{4/5}\quad \text{for some }S\subset [a]\text{ with }|S|\le \Delta\right )\le 2^a a n^{-1/4}\le n^{-1/3}\](here the estimates $n^{4/5}\le n^{9/10}/(2s)^a$ and $2^a n^{-1/4}\le n^{-1/3}$ follow from the assumption that $n\ge s^{100a}$).

    Meanwhile, again by Observation~\ref{Obs1}, we have that $\{x_1,\dots,x_a\}$ is an independent set of size $a$ with probability greater than $ \prod_{i=1}^a \left[\frac{1}{|A^*|}\left(\frac{|A^*|}{(2s)^{i-1}}-s-(i-1)\right)\right]\ge  \frac{1}{(4s)^{a^2}} $ (assuming $|A^*| = n^{1/3} \ge (2s)^a$). 
    
    Thus, since $n\ge s^{10a^2}$, we can find some independent set $I\subset A^*$ of size $a$, so that $|Y_S|\ge n^{4/5}$ for all $S\subset [a]$ with $|S|\le \Delta$. We may now find the required $H$.
    
    Fix some $\iota:A\to [a], \psi:B\to [|B|]$. For each $j\in [|B|]$, set $S_j := \iota(N(\psi(j)))$, and pick some vertex $y_j\in Y_{S_j}$ uniformly at random (and independently of the other $y_j$'s). Similar to before, we have that $\{y_1,\dots,y_{|B|}\}$ is an independent set of size $|B|$ with probability at least $\prod_{j=1}^{|B|} \frac{1}{|S_j|}(\frac{|S_j|}{(2s)^{j-1}}-(|B|-(j-1))s-(j-1)-a)>0$ (assuming $n^{4/5}> (|A|+|B|)(2s)^{|B|}$, which certainly holds for large enough $s$). This yields our copy of $H$.  
\end{proof}

We now require a ``polychotomy'' variant of Lemma~\ref{Lem:unbalanced} in order to handle unbalanced graphs. Here, either something ``abnormal'' happens (one of the three bullets listed below), or a fourth ``expected'' outcome must hold; in each case we shall pass to some induced $G'$ which will lead us to victory.

\begin{proposition}\label{polychotomy Kh}
    Let $h\ge 2$ and suppose $d$ is sufficiently large with respect to $h$.  

    Let $G$ be an $n$-vertex $d$-degenerate graph with disjoint vertex sets $A_0,B$ so that $|A_0|> d^6|B|$ and $e(G[A_0,B])\ge \frac{nd}{10}$. Assume $G$ does not have an induced subgraph $G'$ satisfying any of the following: 
    \begin{itemize}
        \item $G'$ is the 1-subdivision of $K_h$;
        \item $G'$ has $n'\ge d^{1/5}$ vertices with $d(G')\ge (n')^{1-1/(100h)}$;
        \item $G'$ has no $C_4$ and $d(G')\ge d^{1/(50000h)}$.
    \end{itemize}
    
    Then we can find some induced subgraph $G'$ which is the $1$-subdivision of a simple graph $H$ with average degree $d(H)\ge d$. 
\end{proposition}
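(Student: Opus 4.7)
My strategy is to adapt the proof of Lemma~\ref{Lem:unbalanced} to the case $s=2$, and then invoke the three bulleted outcomes to absorb the multi-edges that may arise in the resulting multigraph on $B_p$.

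First, I would observe that we may freely assume $\omega(G)<d^{1/5}$, since any clique of size $n'\ge d^{1/5}$ inside $G$ is itself an induced subgraph with $d(G')=n'-1\ge (n')^{1-1/(100h)}$, realising the second bullet. Applying the opening claim of Lemma~\ref{Lem:unbalanced}, we then extract an independent set $A'\subseteq A_0$ of size $|A'|\ge |A_0|/(40d)> d^{5}|B|/40$ satisfying $d_B(x)\in[d/20,10d]$ for every $x\in A'$. Since $\omega(G)<d^{1/5}$, Tur\'an's theorem guarantees that each $N_B(x)$ contains a polynomial number of non-edges (of order at least $d^{2-1/5}$).

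Now sample $B_p\subseteq B$ by including each vertex independently with probability $p=c/d$, for a suitably small constant $c$, and let $A''\subseteq A'$ be the set of $x$ whose $B_p$-neighbourhood has size exactly $2$, forms a non-edge in $G$, and is consistent with the usual $d$-degeneracy-ordering condition on $B_p$. The first-moment computation in the proof of Lemma~\ref{Lem:unbalanced}, together with the extra polynomial slack $|A'|\gtrsim d^{5}|B|$ (compared to the $d^{4}|B|$ that sufficed in the lemma for $s=2$), yields $|A''|\ge C_h\cdot d\cdot |B_p|$ for a polynomial parameter $C_h = h^{\Omega(1)}$ that we are free to choose. The induced subgraph $G[A''\cup B_p]$ is then a $1$-subdivision of a multigraph $\HH$ on vertex set $B_p$ with $d(\HH)\ge 2 C_h d$.

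Let $\hat\HH$ be the simple underlying graph of $\HH$. If $\omega(\hat\HH)\ge h$, choosing a copy of $K_h$ in $\hat\HH$ together with an arbitrary representative $A''$-vertex per pair immediately yields an induced $1$-subdivision of $K_h$ in $G$, realising the first bullet. If instead $d(\hat\HH)\ge d$, then picking one representative per edge of $\hat\HH$ gives the desired simple graph $H=\hat\HH$ and an induced $1$-subdivision with $d(H)\ge d$, completing the proof.

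The hard case will be when $\hat\HH$ is $K_h$-free with $d(\hat\HH)<d$: then $e(\hat\HH)<d|B_p|/2$, and by pigeonhole some pair $\{b_1,b_2\}\in\binom{B_p}{2}$ has multiplicity at least $|A''|/e(\hat\HH)\ge C_h$ in $\HH$, i.e.\ $|S|\ge C_h = d^{\Omega(1)}$ where $S:=N_G(b_1)\cap N_G(b_2)$. The main obstacle is that, since $S\cap A'$ is already independent by our construction of $A'$, the subgraph $G[S\cup\{b_1,b_2\}]$ is at best an induced $K_{2,|S|}$, which fits none of the three bullets. My plan to get around this is to examine the induced subgraph $G[S]$: if $|S|\ge d^{1/5}$ and $d(G[S])\ge |S|^{1-1/(100h)}$, the second bullet is immediately realised; otherwise $G[S]$ is sparse, and one iterates the sampling construction inside $G[S]$, using $\{b_1,b_2\}$ as two of the branch vertices and invoking Lemma~\ref{one sided erdos hajnal} to locate $h-2$ further mutually independent branch vertices together with the subdivision vertices for all remaining pairs, thereby assembling an induced $1$-subdivision of $K_h$ (the first bullet). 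Balancing the polynomial losses incurred in this iteration against the exponents $1/5$, $1/(100h)$ and $1/(50000h)$ appearing in the three bullets, and confirming that the slack provided by $C_h$ is enough to absorb them, will constitute the main technical work.
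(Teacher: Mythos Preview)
Your reduction to a multigraph $\HH$ on $B_p$ and the handling of the cases $\omega(\hat\HH)\ge h$ and $d(\hat\HH)\ge d$ are fine, but the ``hard case'' contains a genuine gap. When some pair $\{b_1,b_2\}$ has multiplicity $\ge C_h$ in $\HH$, what you have actually produced is $\ge C_h$ vertices of $A''$ all lying in $N_G(b_1)\cap N_G(b_2)$. But $A''\subset A'$ is an independent set by construction, so $G[S\cap A'']$ has no edges whatsoever. Your proposal to ``iterate the sampling construction inside $G[S]$'' therefore has nothing to work with, and invoking Lemma~\ref{one sided erdos hajnal} is in the wrong direction, since that lemma needs density close to $1$ rather than density $0$. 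An induced $K_{2,m}$, no matter how large $m$ is, simply does not contain a $1$-subdivision of $K_h$ for $h\ge 4$, so this branch of your argument is a dead end as stated. (There is also an inconsistency: you first set $C_h=h^{\Omega(1)}$ and then claim $|S|\ge C_h=d^{\Omega(1)}$.)

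The paper's proof resolves the multi-edge issue \emph{before} sampling rather than after. For each $x\in A'$ it first locates, inside $N_B(x)$, a large independent set $J_x$ (if no such set exists, Lemma~\ref{independent set or dense} applied to $G[N_B(x)]$ already yields one of the last two bullets). It then applies a shattering argument to $J_x$: either the family of sets $\{N(x')\cap J_x:x'\in A'\}$ $2$-shatters an $h$-set, which directly gives an induced $1$-subdivision of $K_h$ (the first bullet), or one can extract a pair $I_x\in\binom{J_x}{2}$ that is ``almost private'' to $x$, in the sense that fewer than $d$ other $x'\in A'$ contain $I_x$ in their neighbourhood. After a further sparsification of $A'$ to make these pairs genuinely private, the random sampling is carried out with each surviving $x$ \emph{forced} to land on its designated pair $I_x$; the resulting $1$-subdivision is then automatically $C_4$-free, so the underlying multigraph is simple. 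In short, the missing idea in your proposal is this per-vertex selection of a nearly private pair via the shattering/Ramsey step, which is what prevents high-multiplicity pairs from ever arising.
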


\begin{remark}
    We note that in the statement of Proposition~\ref{polychotomy Kh} (and Proposition~\ref{polychotomy degree bounded}, stated later), we do not have any assumptions about lacking $K_{s,s}$-subgraphs. These are general results which are true for arbitrary unbalanced graphs $G$. To finish our arguments, we will simply use that any such $G'$ that lacks $K_{s,s}$-subgraphs will have a desirable induced subgraph.
\end{remark}
    
\noindent We delay the proof of Proposition~\ref{polychotomy Kh} to the next section. We are now ready to prove our main theorem.

\begin{proof}[Proof of Theorem~\ref{thm: main1}]
 Fix $h\ge 2$ and consider $s$ sufficiently large.

 Let $G$ have no $K_{s,s}$-subgraphs with $d:=d(G)\ge s^{500h^2}$. By the largeness of $s$, we may assume $d$ is sufficiently large. We want to find an induced proper balanced subdivision of $K_h$. We shall assume that $G$ does not contain an induced copy of the $1$-subdivision of $K_h$, as otherwise we are done.
 
We will argue that we can always pass to an induced subgraph $G'$ satisfying one of the four outcomes:
\begin{itemize}
    \item $G'$ is the 1-subdivision of $K_h$;
    \item $G'$ has $n'\ge d^{1/5}$ vertices with $d(G') \ge (n')^{1-1/(100h)}$;
    \item $G'$ has no $C_4$ and $d(G')\ge d^{1/(62500h)}$;
    \item $G'$ is the $1$-subdivision of a (simple) graph $H$ with $d(H)\ge d$.
\end{itemize}\noindent If the first bullet holds, we obtain a contradiction. On the other hand, if the second bullet holds, we apply Lemma~\ref{one sided erdos hajnal} to $G'$ (using the facts that $G$ has no $K_{s,s}$, $n'\geq d^{1/5}\geq s^{C^*_F}$ and that $\varepsilon_{F}\geq 1/(100h)$, where $F$ is the $1$-subdivision of $K_h$) which brings us back to the first bullet, a contradiction.

Meanwhile, if the third bullet holds, then we find a balanced subdivision of $K_{h'}$ with $h' \ge d^{1/(250000000h)}$ by Proposition~\ref{base case}, which is far greater than $h$ for large $d$. 

Finally, if the fourth bullet holds, we may find an induced balanced subdivision of $K_{h'}\subset G'$ with $h' = \Omega(d^{1/2})$ by Corollary~\ref{subdivision reduction} (taking $s=t=2$ and using the fact $H$ is simple and therefore $G'$ has no $C_4$).

We are going to show now that one of the four bullets must necessarily hold. The result will then follow, as explained above. 
   
We may and will assume as well that $G$ is $d$-degenerate for some $d\geq s^{500h^2}$. First, we apply Lemma~\ref{Lem:dichotomy} (with $L \coloneqq d^6$) to $G$. If the first case in Lemma~\ref{Lem:dichotomy} holds, then we have the exact requirements to apply Proposition~\ref{polychotomy Kh} and therefore find some induced $G'$ satisfying one of the first three bullets with the same $d$.  

Otherwise, the second case happens and we can pass to some induced subgraph $G^*$ which is almost-regular (with $d^*:=d(G^*) \ge d^{1/2}$ and $\Delta(G^*)\le (d^*)^{1+1/(500h^2)}$, as $d^*$ is sufficiently large). Here, we apply Lemma~\ref{Lem: denseorC_4} to find some $G'$ satisfying either the second or third bullet.

 This completes the proof of Theorem~\ref{thm: main1}.
\end{proof}

\subsection{General degree-bounded classes}

We have now reached the final frontier. In the previous subsection, one of our conditions for success was when $G$ contained the $1$-subdivision of $K_h$ as an induced subgraph. Hence we could assume throughout our analysis that $G$ did not contain such an induced subgraph, allowing us to apply Lemma~\ref{one sided erdos hajnal}. Here, we will replace the role of the $1$-subdivision of $K_h$ by another graph $H_k$ which has no $C_4$ and average degree $k$, so that we may assume throughout $G$ is $H_k$-free (or else we will be immediately done). 

\begin{lemma}\label{Lem: Hk}
    For each $k\ge 2$, there exists a bipartite graph $H_k$ on $\le 8k^2$ vertices with $\Delta(H_k)\le 2k$, so that $d(H_k)\ge k$ which contains no $C_4$. Additionally $H_k$ is an induced subgraph of the $1$-subdivision of $K_{8k^2}^{(2k)}$ (the complete $2k$-uniform hypergraph on $8k^2$ vertices).
\end{lemma}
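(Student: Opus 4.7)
The plan is to take $H_k$ to be the bipartite point--line incidence graph of a suitable finite projective plane, then realize it as an induced subgraph of the specified $1$-subdivision via a simple padding argument.

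By Bertrand's postulate, for each $k\ge 2$ there is a prime $q$ satisfying $k<q<2k$, so $k+1\le q\le 2k-1$. I would take $H_k$ to be the incidence graph of the projective plane $PG(2,q)$, which is $(q+1)$-regular on $2(q^2+q+1)$ vertices. Since any two distinct lines of $PG(2,q)$ share at most one point, $H_k$ has no $C_4$; since $q+1\le 2k$, we have $\Delta(H_k)\le 2k$ and $d(H_k)=q+1\ge k+2\ge k$; and since $q\le 2k-1$, the vertex count is at most $2((2k-1)^2+(2k-1)+1)=8k^2-4k+2\le 8k^2$. This already gives the first sentence of the lemma.

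For the second sentence, I need to realize $H_k$ as an induced subgraph of the $1$-subdivision $\Gamma$ of $K_{8k^2}^{(2k)}$. I would identify the $P:=q^2+q+1$ points of $PG(2,q)$ with a subset $B'\subset[8k^2]$, and set $X:=[8k^2]\setminus B'$, so that $|X|\ge 4k^2+2k-1$. For each line $\ell$ of $PG(2,q)$, inflate it to a $2k$-subset $a_\ell:=\ell\cup X_\ell$ by adjoining an arbitrary $X_\ell\subseteq X$ of size $2k-(q+1)$; the $X_\ell$ may overlap, but must live entirely inside $X$. The resulting $2k$-subsets $\{a_\ell\}$ are pairwise distinct (they differ already on $B'$), so they correspond to a valid set $A'$ of vertices on the hyperedge side of $\Gamma$. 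Because $a_\ell\cap B'=\ell$ by construction, the edge set of $\Gamma[A'\cup B']$ consists precisely of the point--line incidences of $PG(2,q)$, giving $\Gamma[A'\cup B']\cong H_k$ as desired.

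The one potentially fiddly point is reconciling the uniform size $2k$ of hyperedges in $K_{8k^2}^{(2k)}$ with the line size $q+1\le 2k$ in $PG(2,q)$; the padding with elements of $X$ handles this without creating spurious edges because those padding elements do not appear in $B'$. Everything else is an essentially mechanical verification.
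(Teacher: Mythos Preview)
Your proof is correct and follows essentially the same approach as the paper: both take a finite-field-based $C_4$-free bipartite regular graph and embed it into the $1$-subdivision by identifying one side with a subset of $[8k^2]$ and the other with appropriately sized hyperedges. The only cosmetic differences are that the paper rounds $k$ up to a power of $2$ (rather than a prime via Bertrand) and handles the uniformity mismatch by an inductive observation that $\Gamma_{n,s}$ contains $\Gamma_{n-1,s-1}$ and $\Gamma_{n-1,s}$ as induced subgraphs, whereas your direct padding argument accomplishes the same thing in one step.
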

\begin{proof}
    Whenever $k$ is a prime-power, one can construct a bipartite $k$-regular graph $H_k$ on $2k^2$ vertices without a $C_4$ (see e.g. \cite[Theorem~1]{conlon}). The result now follows for general $k$ because we can always find a power of $2$, $q= 2^t$, where $k\le q< 2k$.

    Now for $n,s$, write $\Gamma_{n,s}$ for the 1-subdivision of $K_n^{(s)}$. It remains to show that $H_k$ is an induced subgraph  $\Gamma_{8k^2,2k}$. This follows from two observations, which can be easily verified. 

    \begin{observation}
         $\Gamma_{n,s}$ contains $\Gamma_{n-1,s-1}$ and $\Gamma_{n-1,s}$ as induced subgraphs.
    \end{observation}
    \begin{observation}
        Let $H$ be an $s$-regular bipartite graph with bipartition $A,B$, where $N(y)\neq N(y')$ for distinct $y,y'\in B$. Then $H$ is an induced subgraph of $\Gamma_{|A|,s}$.
    \end{observation}
    Now because $H_k$ is $q$-regular for some $q\in [k,2k)$, it must have distinct neighborhoods (else it would have a $4$-cycle). Also $H_k$ has $2q^2\le 8k^2-(2k-q)$ vertices (since $q<2k$), so we can apply the above observations.
\end{proof}

We now state our modification of Proposition~\ref{polychotomy Kh}. 
\begin{proposition}\label{polychotomy degree bounded}
    Let $D,k \ge 1$, and suppose $d$ is sufficiently large with respect to $D$ and $k$. 

    Let $G$ be an $n$-vertex $d$-degenerate graph with disjoint vertex sets $A_0,B$ such that $|A_0|>d^{D+3}|B|$ and  $e(G[A_0,B]) \ge \frac{nd}{10}$. Moreover, suppose  $G$ does not have an induced subgraph $G'$ satisfying any of the following: 
    \begin{itemize}
        \item $G'$ is the $1$-subdivision of $K_{8k^2}^{(2k)}$;
        \item $G'$ has $n'\ge d^{1/5}$ vertices and $d(G')\ge (n')^{1-1/(200k)}$;
        \item $G'$ has no $C_4$ and $d(G')\ge d^{1/(125000 k)}$.
    \end{itemize}Then, $G$ has an induced subgraph $G'$ with $d(G')\ge D$ that has no $K_{2k,2k}$-subgraphs.
\end{proposition}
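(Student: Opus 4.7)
The proof plan will run in close parallel to that of Proposition~\ref{polychotomy Kh}. Namely, we apply Lemma~\ref{Lem:unbalanced} to the bipartition $(A_0,B)$ with uniformity $s := D+1$. The assumption $|A_0|>d^{D+3}|B|$ yields the required hypothesis $|A_0|>40 d^{s+2}|B|$ once $d$ is sufficiently large. The Ramsey-type clique side-condition of Lemma~\ref{Lem:unbalanced} is handled by observing that if $\omega(G)\geq d^{1/5}$, then a maximum clique (on $\omega(G)\geq d^{1/5}$ vertices, with average degree $\omega(G)-1\geq \omega(G)^{1-1/(200k)}$) would be an induced subgraph triggering bullet~(ii), contradicting our assumption; thus we may assume $\omega(G)<d^{1/5}$, and a slightly more careful (possibly iterated) version of the same dichotomy pushes $\omega(G)$ down to a scale where $d/20>\binom{s+\omega(G)}{s}$ still holds.

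This yields an induced subgraph $G^*\subset G$ which is the $1$-subdivision of an $s$-uniform multihypergraph $\HH$ with $d(\HH)\geq d$. Using the harmonic-mean expression $d(G^*)=\tfrac{2s\,d(\HH)}{s+d(\HH)}\geq s$, we have $d(G^*)\geq s=D+1>D$, so $G^*$ already meets the degree requirement; it only remains to remove $K_{2k,2k}$-subgraphs.

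Since $G^*$ is bipartite with sides $V(\HH)$ and $E(\HH)$, any $K_{2k,2k}$-subgraph corresponds to a $2k$-subset $V\subset V(\HH)$ contained in at least $2k$ distinct hyperedges of $\HH$. If $s<2k$ (the easy range $D\leq 2k-2$) no such subset can be contained in a hyperedge of size $s$, so we simply take $G':=G^*$. In the range $s\geq 2k$ (equivalently $D\geq 2k-1$) one exploits bullet~(i): if some $8k^2$-subset $T\subset V(\HH)$ has the property that every $2k$-subset of $T$ arises as a trace $e\cap T$ of some hyperedge, then by Observation~\ref{subdivision subgraphs are induced} one realizes the induced $1$-subdivision of $K^{(2k)}_{8k^2}$ inside $G^*$, contradicting bullet~(i). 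This Tur\'an-type restriction on the trace structure of $\HH$ forces the number of $2k$-subsets $V$ with link at least $2k$ to be small. A deletion argument in the style of Lemma~\ref{deletion} then removes a $o(1)$ fraction of the edge-vertices of $G^*$, destroying every $K_{2k,2k}$ while preserving $d(G')\geq D$, yielding the desired $G'$.

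The main obstacle I anticipate is precisely the $s\geq 2k$ case: translating the structural exclusion of $K^{(2k)}_{8k^2}$-traces into a quantitatively useful bound on the number of $2k$-subsets of $V(\HH)$ with many containing hyperedges, strong enough that the subsequent deletion step loses only a subpolynomial factor in the average degree. I would approach this through a hypergraph supersaturation argument on the $2k$-shadow of $\HH$, likely preceded by a dependent random choice step (Lemma~\ref{DRC}) applied to the incidence bipartite graph between $2k$-subsets and hyperedges, in order to regularize codegrees before running the deletion argument. Once this quantitative trace bound is in hand, the deletion mirrors Lemma~\ref{deletion} almost verbatim.
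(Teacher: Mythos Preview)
Your approach differs substantially from the paper's and has a genuine gap at two places.

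First, the clique side-condition of Lemma~\ref{Lem:unbalanced} requires $d/20 > \binom{s+\omega(G)}{s}$. From bullet~(ii) you only extract $\omega(G) < d^{1/5}$, so with $s = D+1$ you would need $d/20 > \binom{D+1+d^{1/5}}{D+1}$; but the right-hand side is of order $d^{(D+1)/5}$, which exceeds $d$ once $D \ge 4$. There is no evident way to ``iterate'' bullet~(ii) to push $\omega(G)$ lower, since that bullet only fires for induced subgraphs on at least $d^{1/5}$ vertices. (There is also a minor constant issue: with $s = D+1$ one needs $|A_0| > 40\,d^{D+3}|B|$, not merely $d^{D+3}|B|$, and this gap is not absorbed by taking $d$ large.)

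Second, the deletion step in the range $s \ge 2k$ is not salvageable as sketched. A $K_{2k,2k}$ in $G^*$ corresponds to a single $2k$-set $V_0 \subset V(\HH)$ contained in at least $2k$ hyperedges, whereas forbidding the $1$-subdivision of $K_{8k^2}^{(2k)}$ is a statement about \emph{traces} $e \cap T$ on an $8k^2$-set $T$; it does not bound how many hyperedges contain a given $2k$-set. Concretely, nothing in Lemma~\ref{Lem:unbalanced} prevents the output $\HH$ from having every hyperedge contain a common $2k$-set $V_0$; then \emph{every} hyperedge-vertex lies in a $K_{2k,2k}$ and no deletion preserves average degree, yet no $8k^2$-set is $(2k)$-shattered, so bullet~(i) never triggers. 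Your proposed supersaturation on the $2k$-shadow cannot overcome this, because the obstruction is not a density phenomenon but a codegree one.

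The paper proceeds differently: rather than invoking Lemma~\ref{Lem:unbalanced} and cleaning afterward, it applies Propositions~\ref{unbalanced 2 clean} and~\ref{unbalanced 2 messy} with $h := 8k^2$, $k := 2k$, $\eps := 1/(1000k)$, which build the $K_{2k,2k}$-freeness into the construction. For each $x \in A'$ one first finds a large independent set $J \subset N_B(x)$ via Lemma~\ref{independent set or dense} using bullets~(ii) and~(iii) (so no Ramsey/clique hypothesis is ever needed), and then the shattering lemma (Lemma~\ref{shattering lemma}) produces $I_x \in \binom{J}{D}$ with $|I_x \cap N(x')| < 2k$ for all other $x'$, unless bullet~(i) is triggered. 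The resulting $1$-subdivision of a $D$-uniform multihypergraph then has no $K_{2k,2k}$ \emph{by design}, which is exactly the step your post-hoc deletion cannot achieve.
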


This fourth outcome will be combined with the following result the authors together with Du, McCarty and Scott showed in \cite{c4free}.  

\begin{theorem}\label{old degree-bound}
    Let $k,s\geq 2$ and let $G$ be a graph with no $K_{s,s}$. Then, $G$ has an induced subgraph $G'\subset G$ with $d(G')\ge k$, provided $d(G)\ge k^{Cs^3}$, for some absolute constant $C>0$. 
\end{theorem}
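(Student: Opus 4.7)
This theorem is cited from the prior joint work \cite{c4free} of the authors with Du, McCarty, and Scott, so rather than invent a new argument my plan is to follow the iterative strategy used there. The overall approach would be to construct a decreasing chain of induced subgraphs $G = G_0 \supset G_1 \supset \ldots \supset G_\tau = G'$, where each step loses only a controlled polynomial (in $s$) factor in the average degree, and $G_\tau$ finally has average degree at least $k$. Since $d(G) \ge k^{Cs^3}$, one has a large ``budget'' to spend across roughly $O(s)$ cleaning rounds.

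At each step, I would apply a dichotomy in the spirit of Lemma~\ref{Lem:dichotomy}: either $G_i$ admits a highly lopsided bipartition $(A_i,B_i)$ with $|A_i|\gg |B_i|$ and most of the edges going between them, in which case one concentrates on this structured bipartite side; or $G_i$ is close to regular (with $\Delta(G_i)$ not much bigger than $d(G_i)$), in which case one invokes a dependent random choice / supersaturation argument exploiting the $K_{s,s}$-freeness to pass to a cleaner induced subgraph with only a mild decrease in average degree. Alternating (and iterating) these cleaning operations shrinks the graph while preserving the $K_{s,s}$-freeness, and after sufficiently many rounds the average degree lands at $\ge k$, producing the desired $G'$.

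The technical heart of the argument, and the source of the $s^3$ exponent, lies in the almost-regular case: extracting structure from a nearly-regular $K_{s,s}$-free graph typically costs a factor of roughly $d^{O(1/s)}$ per cleaning step because one has to work with $s$-sets of vertices sharing $s$-tuples of common neighbors (as in the spirit of Lemma~\ref{almost regular} above). Accumulating this loss over $O(s^2)$ rounds is what forces the exponent to be polynomial of degree $3$ in $s$. The main obstacle for me would be to set up this accounting so that the losses are bounded and hereditary throughout the iteration, without ever dropping below $d(G_i)\ge k$ before the cleaning terminates; getting the exponent down to $O(s^3)$ rather than the exponential $2^{O(s)}$-type dependence of K\"uhn--Osthus is exactly the substantive content of \cite{c4free}.
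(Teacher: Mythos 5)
There is a genuine issue here, and it is worth being precise about what it is. First, note that as literally stated, Theorem~\ref{old degree-bound} is vacuous: since $d(G)\ge k^{Cs^3}\ge k$, you may simply take $G'=G$. The actual content — both of the cited result in \cite{c4free} and of the way this paper uses it (see the fifth bullet in the proof of Theorem~\ref{thm:main2}, where ``by definition of $D$, we can pass to some induced subgraph $G''\subset G'$ that lacks $C_4$'s with $d(G'')\ge k$'') — is that the induced subgraph $G'$ can be taken to be \emph{$C_4$-free}. Your sketch never mentions this requirement, and it is exactly the point where all the difficulty lives: without it there is nothing to prove, and with it the iteration you describe must, at every cleaning step, be steering towards (and eventually producing) a $C_4$-free induced subgraph while only paying polynomial-in-$s$ losses. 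None of the mechanisms you gesture at (the lopsided/almost-regular dichotomy, dependent random choice, ``accounting over $O(s^2)$ rounds'') are actually specified, so the attempt is a plan rather than a proof, and the plan is aimed at the wrong (trivial) target.

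Second, regarding how this compares with the paper: the paper does not prove Theorem~\ref{old degree-bound} at all — it imports it as a black box from \cite{c4free}, and the accompanying remark notes that even a much weaker quantitative version (deducible from results of McCarty and of Kwan, Letzter, Sudakov, and Tran) would suffice for the application. So simply citing \cite{c4free} would have been entirely in line with the paper; attempting to reconstruct the argument of \cite{c4free} is a different and much larger task, and what you have written does not carry it out. If you want to go that route, you need to state the correct ($C_4$-free) conclusion, specify the cleaning steps and what invariant each preserves, and show where $K_{s,s}$-freeness is used to remove or avoid $4$-cycles with only a $d^{O(1/s)}$-type loss per step; as it stands, the claimed source of the $s^3$ exponent is asserted rather than derived.
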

\begin{remark}
    It would be enough to have a much weaker quantitative version of the above which is easily deduced from two results proved by McCarty~\cite{McCarty} and Letzter, Kwan, Sudakov, and Tran~\cite{KLST}.
\end{remark} 

\begin{proof}[Proof of Theorem~\ref{thm:main2}]
    Fix $k\ge 2$, and set $D \coloneqq k^{Ck^3}$ as in Theorem~\ref{old degree-bound}. Consider $s$ large. Now let $G$ be a graph with average degree $d:=d(G)\ge s^{5000k^4}$ that contains no $K_{s,s}$-subgraphs. We shall deduce that $G$ has an induced subgraph $G''\subset G$ that has no $4$-cycles with $d(G'')\ge k$. 
    
    We may and will assume that $G$ does not contain the graph $H_k$ from Lemma~\ref{Lem: Hk} (else we are immediately done). 
    
    We now shall show that we can pass to an induced subgraph $G'$ satisfying one of five outcomes:
    \begin{itemize}
        \item $G'$ is $H_k$;
        \item $G'$ is the subdivision of $K_{(8k)}^{2k}$;
        \item $G'$ has $n'\ge d^{1/5}$ vertices with $d(G')\ge (n')^{1-1/(200k)}$;
       \item $G'$ has no $C_4$ and $d(G')\ge d^{1/(125000k)}$;
        \item $G'$ has no $K_{2k,2k}$ (as a subgraph) and $d(G')\ge D$.
    \end{itemize}
    \noindent Clearly, the first outcome would be a contradiction, and the second outcome implies the first (by Lemma~\ref{Lem: Hk}). Moreover, the third outcome also implies the first (via Lemma~\ref{one sided erdos hajnal} with $H:= H_k$, noting that $\varepsilon_{H_k}\geq 1/(200k)$ and $n'\geq d^{1/5}\geq s^{1000k^4}\geq s^{C^*_{H_k}}$). Now, the fourth outcome is exactly what we want with room to spare, since $d^{1/(125000k)}>k$. Finally, if the fifth outcome happens, then by definition of $D$, we can pass to some induced subgraph $G''\subset G'$ that lacks $C_4$'s with $d(G'')\ge k$, as we wanted to show. Therefore, all that is left to do is to show that one of the five bullets must hold.
    
    As before, we may assume $G$ is $d$-degenerate. Next, we apply Lemma~\ref{Lem:dichotomy} to $G$ with $L\coloneqq 4d^{D+3}$. If the first case holds, then we have some heavily unbalanced partition of $G$ satisfying the requirements as in Proposition~\ref{polychotomy degree bounded} and therefore one of the last four bullets must happen. 
    
    Otherwise, the second case holds and we can pass to some induced subgraph $G^*$ which is almost-regular (with $d^*:= d(G^*)=\Omega(d/\log(L)^2) \geq d^{1/2}$ and $\Delta(G^*)\le (d^*)^{1+1/(1000k)}$, since $d$ is large enough). We may then finish off by invoking Lemma~\ref{Lem: denseorC_4} (taking $\eps:= 1/(1000k)$) to find some $G'$ satisfying either the third or fourth bullet. This completes the proof.
\end{proof}

We mention that essentially the same argument gives the following result, which will be more convenient in future applications.

\begin{proposition}
    Fix a bipartite graph $H$, along with some $\eps>0$ and $t\ge 1$.

    Now consider an $H$-free graph with average degree $d$, where $d$ is sufficiently large. Then there exists an induced subgraph $G'\subset G$ satisfying either:
    \begin{itemize}
        \item $G'$ has $n'\ge d^{1/10}$ vertices and $d(G')\ge (n')^{1-\eps}$;
        \item $G'$ lacks $4$-cycles and $d(G')\ge t$.
    \end{itemize}
\end{proposition}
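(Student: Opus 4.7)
The plan is to mirror the proof of Theorem~\ref{thm:main2} closely, with induced $H$-freeness assuming the role previously played by $K_{s,s}$-subgraph-freeness together with $H_k$-freeness. First, fix an integer $k$ large enough in terms of $|V(H)|$, $\Delta(H)$, $1/\eps$, and $t$, so that: (i) $1/(200k) \le \eps$; (ii) $2k$ is large enough for Theorem~\ref{thm:main2} (invoked below with $s := 2k$ and degree target $t$) to apply; and (iii) the $1$-subdivision of $K_{8k^2}^{(2k)}$ contains $H$ as an induced subgraph. Condition (iii) is achievable because $H$ is bipartite: embed one part of $H$ into the ``original'' vertices of $K_{8k^2}^{(2k)}$ and the other part into the ``edge'' vertices, padding each chosen hyperedge with fresh distinct vertices so that the induced subgraph is exactly $H$; this fits inside $8k^2$ original vertices once $k \gg |V(H)|$. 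Set $D := (2k)^{5000t^4}$, so that Theorem~\ref{thm:main2} applied to any $K_{2k,2k}$-subgraph-free graph of average degree $\ge D$ yields an induced $C_4$-free subgraph of average degree $\ge t$. Finally, construct $H_t$ via Lemma~\ref{Lem: Hk}; if $G$ contains $H_t$ induced we are immediately done (second bullet), so assume $G$ is $H_t$-free.

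Next, pass to a $d$-degenerate induced subgraph of $G$ and apply Lemma~\ref{Lem:dichotomy} with $L := 4d^{D+3}$. In the unbalanced case, invoke Proposition~\ref{polychotomy degree bounded} with the above $k$ and $D$, producing an induced $G'$ satisfying one of four outcomes. Outcome (a) --- an induced $1$-subdivision of $K_{8k^2}^{(2k)}$ --- is excluded by (iii) and $H$-freeness. Outcome (b) gives an induced $G'$ with $n' \ge d^{1/5} \ge d^{1/10}$ and $d(G') \ge (n')^{1-1/(200k)} \ge (n')^{1-\eps}$, the first bullet. Outcome (c) gives a $C_4$-free induced $G'$ with $d(G') \ge d^{1/(125000k)} \ge t$ for $d$ large, the second bullet. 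For outcome (d), $G'$ has no $K_{2k,2k}$ subgraph and $d(G') \ge D$, so applying Theorem~\ref{thm:main2} to $G'$ produces the second bullet.

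In the almost-regular case of Lemma~\ref{Lem:dichotomy}, we obtain $G^*$ with $d^* := d(G^*) \ge d^{1/2}$ and $\Delta(G^*) \le (d^*)^{1+\eps'}$ for $\eps' := \min(\eps/5, 1/20)$, valid since $\log L$ is polylogarithmic in $d$. Applying Lemma~\ref{Lem: denseorC_4} with this $\eps'$ yields either a dense induced subgraph on $n' \ge (d^*)^{1-3\eps'/2} \ge d^{1/10}$ vertices with $d(G') \ge (n')^{1-5\eps'} \ge (n')^{1-\eps}$ (first bullet), or a $C_4$-free induced subgraph of degree $\ge (d^*)^{\eps'/20} \ge t$ for $d$ large (second bullet).

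The main obstacle is the parameter bookkeeping, in particular verifying (iii), which is the point where the bipartiteness of $H$ enters essentially. The remainder is a routine combination of the dichotomy machinery, Proposition~\ref{polychotomy degree bounded}, and a single invocation of Theorem~\ref{thm:main2} to absorb outcome (d) of the polychotomy.
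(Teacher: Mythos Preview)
Your proof is correct and follows essentially the same route as the paper's sketch: apply the dichotomy (Lemma~\ref{Lem:dichotomy}) with $L$ a large power of $d$, handle the unbalanced case via the polychotomy machinery, and handle the almost-regular case via Lemma~\ref{Lem: denseorC_4}. The differences are cosmetic. The paper invokes Propositions~\ref{unbalanced 2 clean} and \ref{unbalanced 2 messy} directly with parameters $h,\,k:=h/2,\,D,\,\eps':=\eps/5$, whereas you go through the pre-packaged Proposition~\ref{polychotomy degree bounded}, forcing you to enlarge $k$ so that $1/(200k)\le \eps$; either works. For the fourth outcome (no $K_{2k,2k}$, average degree $\ge D$) the paper cites Theorem~\ref{old degree-bound} while you cite the stronger Theorem~\ref{thm:main2}; both are available and there is no circularity. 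Your introduction of $H_t$ via Lemma~\ref{Lem: Hk} is harmless but unnecessary---you never use $H_t$-freeness afterwards, since condition~(iii) already lets $H$-freeness exclude outcome~(a).
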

\noindent We quickly sketch a proof.
\begin{proof}[Proof (sketch)]
    Since $H$ is bipartite, one can pick $h$ so that $H$ is an induced subgraph of the $1$-subdivision of $K_{h}^{(h/2)}$. So we may assume $G$ is $\Gamma$-free, where $\Gamma$ is the $1$-subdivision of $K_{h}^{(h/2)}$. We also assume that $G$ contains no induced subgraph $G'\subset G$ on $n'\ge d(G)^{1/10}$ vertices with $d(G')\ge (n')^{1-\eps}$.
    
    We lastly set $D =  t^{Ch^3}$, as in Theorem~\ref{old degree-bound}.

    Now, suppose $d:=d(G)$ is sufficiently large (and that $G$ is $d$-degenerate). 
    We apply Lemma~\ref{Lem:dichotomy} with $L := 4d^{D+3}$. 
    
    If the first case holds, then instead of invoking Proposition~\ref{polychotomy degree bounded}, we now use the more general Propositions~\ref{unbalanced 2 clean} and \ref{unbalanced 2 messy} with parameters $h:= h,k:=h/2, D:= D, \eps':=\eps/5$. Supposing neither of our assumptions are contradicted, we can find some induced $G'\subset G$ with no $4$-cycles and $d(G')\ge t$, either by applying Theorem~\ref{old degree-bound} or by the fact that $d^{\eps'/125}\ge t$ (since $d$ is large). So we are done in this case.

    Otherwise, the second case of Lemma~\ref{Lem:dichotomy} holds and we can pass to some almost-regular subgraph $G^*\subset G$. We are then done by an application of Lemma~\ref{deletion}.
\end{proof}

\section{Proof of the polychotomy propositions}

\subsection{Big picture}

The purpose of this section is to establish Propositions~\ref{polychotomy Kh} and \ref{polychotomy degree bounded}. They will both follow from a single general argument.

We first need a ``model result'', in the same vein as to Lemma~\ref{Lem:unbalanced}. This is how we intend to proceed in both of the aforementioned results when none of the ``forbidden bullets'' occur.

\begin{lemma}\label{unbalanced 2 clean}
    Let $d,D,k\ge 2$ and suppose $d$ is sufficiently large with respect to $D$ and $k$.  
    
    Let $G$ be a graph which is $d$-degenerate with a bi-partition into disjoint vertex sets $A,B$ so that the following holds:
    \begin{itemize}
        \item $|A|>d^{D+1}|B|$;
        \item $G[A] = \emptyset$;
        \item $d(x)\le 100 d$ for each $x\in A$.
    \end{itemize}
    Furthermore, suppose that for every $x\in A$, there is an independent set of size $D$, $I_x:=\{y_1,\dots,y_D\}\in \binom{N(x)}{D}$, so that $|I_{x}\cap N(x')|<k$ for all $x'\in A\setminus \{x\}$.

    Then, $G$ contains an induced subgraph $G'$ which is the $1$-subdivision $H$ of some $D$-uniform multihypergraph $\HH$ with $d(\HH)\ge d$ and with the property that $H$ has no $K_{k,k}$.
\end{lemma}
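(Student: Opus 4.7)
The plan is to mirror the proof of Lemma~\ref{Lem:unbalanced}, obtaining the desired induced $1$-subdivision via a random sampling of $B$. Since $G$ is $d$-degenerate, so is $G[B]$, so I would order $B = \{b_1,\ldots,b_{|B|}\}$ so that each $b_i$ has at most $d$ backward neighbours, and then sample $B_p\subset B$ keeping each vertex independently with probability $p = c_D/d$, where $c_D$ is a small positive constant depending only on $D$ (chosen so that $(100+D)dp \le 1/1000$).

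For each $x\in A$, I would set $X(x) = 1$ precisely when all of the following hold: (i) $I_x\subset B_p$; (ii) $(N(x)\setminus I_x)\cap B_p = \emptyset$; and (iii) for every $y\in I_x$, no backward neighbour of $y$ lies in $B_p$. Note that because $I_x$ is independent in $G$, the backward neighbours of any $y\in I_x$ lie in $B\setminus I_x$, so events (ii) and (iii) only concern the status of vertices outside $I_x$. Using $d_G(x)\le 100d$ from the hypothesis, the union of the ``bad sets'' for (ii) and (iii) has size at most $(100+D)d$. Hence, independently of (i),
\[
\P(X(x)=1)\;\ge\; p^D(1-p)^{(100+D)d} \;\ge\; c'_D \, d^{-D}
\]
for some $c'_D>0$ depending only on $D$. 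Summing gives $\E[\sum_{x\in A}X(x)]\ge c'_D|A|d^{-D}$, while $\E[|B_p|] = p|B|$. The assumption $|A|>d^{D+1}|B|$ then easily yields $\E\bigl[\sum_x X(x) - d|B_p|\bigr]>0$ for $d$ large enough with respect to $D$, so we may fix an outcome giving $A''\coloneqq\{x\in A:X(x)=1\}$ with $|A''|\ge d|B_p|$.

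Now I would apply the standard degeneracy trick and set $B' \coloneqq \{b\in B_p:b\text{ has no backward neighbour in }B_p\}$, which is automatically independent in $G$. Condition (iii) ensures $I_x\subset B'$ for every $x\in A''$, while (ii) ensures no other neighbour of $x$ lies in $B_p\supset B'$; combined with $G[A]=\emptyset$, this gives that $G' := G[A''\cup B']$ is exactly the $1$-subdivision of the $D$-uniform multihypergraph $\HH$ with vertex set $B'$ and edge multiset $\{I_x:x\in A''\}$. Since $|B'|\le |B_p|$ and $|A''|\ge d|B_p|$, we get $d(\HH) = D|A''|/|B'|\ge Dd\ge d$, as required.

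Finally, to see that $G'$ contains no $K_{k,k}$, note that both $A''$ and $B'$ are independent, so any $K_{k,k}$ must have one side in each. Such a $K_{k,k}$ would yield distinct $x_1,\ldots,x_k\in A''$ whose hyperedges share at least $k$ common vertices in $B'$; in particular $|I_{x_1}\cap N(x_2)|\ge k$, contradicting the hypothesis. The only mildly delicate point is arranging that $N(x)\cap B' = I_x$ exactly (rather than $\supset I_x$), which is precisely why conditions (ii) and (iii) are imposed together; once that is set up, the rest is a routine first-moment argument.
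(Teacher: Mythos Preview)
Your proof is correct and follows essentially the same approach as the paper: sample $B$ randomly with probability $\Theta_D(1/d)$, use the $d$-degeneracy ordering (equivalently, an acyclic orientation with out-degree $\le d$) to carve out an independent subset of the sample, and run a first-moment comparison between the survivors in $A$ and $d$ times the sampled set in $B$. The only cosmetic difference is that you bundle the paper's two-step conditioning (first require $N(x)\cap B_p=I_x$, then require $I_x$ to land in the ``independent core'' of $B_p$) into a single indicator $X(x)$, but the probabilities and the use of the independence of $I_x$ to decouple condition (iii) from condition (i) are identical in spirit.
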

\begin{remark}
    The first three bullets are essentially what we get after applying Claim~\ref{make regular} (in the proof of Lemma~\ref{Lem:unbalanced}). The crucial condition is the existence of the independent sets $I_x$, which guarantee us that $G'$ does not have any copy of $K_{k,k}$.
\end{remark}
\begin{proof}
    Let $I_x$ be as in the statement above.
    Since $G$ is $d$-degenerate, we can direct the edges of $G[B]$ to obtain a digraph $Q$ with maximum out-degree at most $ d$.

    Now, let $B'$ be a random subset of $B$ where each vertex is included in $B'$ with probability $p:= 1/(1000Dd)$ (independently). Let $A'$ be the set of $x\in A$ for which $N(x)\cap B' = I_x$ and $B''$ be the set of $y\in B'$ with $|N_Q^+(y)\cap B'| = 0$. Finally, let take $A'':= \{x\in A': N(x)\cap B'' = N(x)\cap B'\}$.

    It is clear that $\E[|B''|]\le \E[|B'|] = |B|p$. Meanwhile, for a fixed $a\in A$, we have that $\P(a\in A') = p^D(1-p)^{d(x)-D}\ge p^D(1-p(100d))\ge p^D/2$ and furthermore
    \[\P(x\in A''|x\in A') \ge 1-\sum_{y\in I_x}\P(y\not\in B''|x\in A') \ge 1-Ddp\ge 1/2.\] 
    Using that $G[I_x]=\emptyset$. Therefore, we have $\E[|A''|] \ge |A|p^D/4\geq |B|$ (using that $d$ is sufficiently large). 
    
    Whence $\E[|A''|-d|B''|]\ge 0$ and so there is some outcome with $|A''|\geq d|B''|$. 
    Taking $G' := G[A''\cup B'']$ gives the desired graph. Indeed, clearly this is the 1-subdivision of some $s$-uniform hypergraph $\HH$ with $d(\HH)\geq d$. Observe that by assumption we have $|N_{G'}(x)\cap N_{G'}(x')|\le |I_x\cap N(x')|<s$ for distinct $x,x'\in A''$ which will imply $G'$ cannot have a $K_{s,s}$, as we wanted to show.  
\end{proof}

\noindent To finish, we show that we can always reduce to this case, unless one of the three forbidden bullets happens.

\begin{proposition}\label{unbalanced 2 messy}
    Let $h,k,D \ge 2$ along with an $\eps\in (0,1/10)$, and suppose $d$ is sufficiently large with respect to $h,k$ and $D$.

    Let $G$ be an $n$-vertex $d$-degenerate graph with disjoint vertex sets $A_0,B$ so that $|A_0|> 100d^{D+3}|B|$ and $e(G[A_0,B])\ge \frac{nd}{10}$. Suppose $G$ does not have an induced subgraph $G'$ satisfying any of the following: 
    \begin{itemize}
        \item $G'$ is the 1-subdivision of $K_h^{(k)}$;
         \item $G'$ has $n'\ge d^{1/5}$ vertices with $d(G')\ge (n')^{1-5\eps}$;
        \item $G'$ has no $C_4$ and $d(G')\ge d^{\eps/125}$.
    \end{itemize}
    
    \noindent Then, we can find $A\subset A_0$ so that $A,B$ satisfies all the conditions of Lemma~\ref{unbalanced 2 clean} (with the same choice of $D,k$).
\end{proposition}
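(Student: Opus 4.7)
The plan is to mimic the proof of Lemma~\ref{Lem:unbalanced} and add a layer that establishes the codegree condition. First, I would apply Claim~\ref{make regular} from the proof of Lemma~\ref{Lem:unbalanced} with input $(A_0, B)$ to extract $A_1 \subseteq A_0$ with $|A_1| \ge |A_0|/(40d) > 2 d^{D+2}|B|$, $G[A_1] = \emptyset$, and $d_B(x) = d_G(x) \in [d/20, 10d]$ for every $x \in A_1$. This immediately delivers the first three hypotheses of Lemma~\ref{unbalanced 2 clean} (note that $|A_1| > d^{D+1}|B|$ with room to spare); the remaining task is to shrink $A_1$ to a subset $A$ of size $> d^{D+1}|B|$ on which the independent-set condition holds.

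I next claim that $\alpha(G[B_x]) \ge h$ for every $x \in A_1$: otherwise Tur\'an's theorem gives $d(G[B_x]) \ge (|B_x|-1)/(h-1) \ge (|B_x|)^{1-5\varepsilon}$ (using $|B_x| \ge d/20 \ge d^{1/5}$ and that $d$ is large compared with $h,\varepsilon$), and $G[B_x]$ would realise the second bullet, a contradiction. So I fix an independent $h$-subset $J_x \subseteq B_x$ for each $x \in A_1$. For a uniformly random $D$-subset $I \subseteq J_x$, $I$ is automatically independent, and a standard $k$-subset count gives
\[
\P\bigl(|I \cap N(x')| \ge k\bigr) \le \binom{|N(x') \cap J_x|}{k} \binom{D}{k} \bigg/ \binom{h}{k}
\]
for every $x' \in A_1 \setminus \{x\}$. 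Consequently, if
\[
\sigma_x := \sum_{x' \in A_1 \setminus \{x\}} \binom{|N(x') \cap J_x|}{k} < \binom{h}{k}\bigg/\bigl(2\tbinom{D}{k}\bigr),
\]
the probabilistic method yields the desired $I_x$; call such $x$ \emph{good}.

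The main obstacle is showing that at least $d^{D+1}|B|$ vertices of $A_1$ are good. I would argue by contradiction: assume at least half of $A_1$ is \emph{bad}. The aggregate count $\sum_{\text{bad }x} \sigma_x$ is then very large, yielding many triples $(x,x',S)$ with $x,x' \in A_1$ distinct, $S \in \binom{J_x}{k}$, and $S \subseteq N(x')$. Because $d_B(x) \in [d/20, 10d]$ the bipartite graph $G[A_1,B]$ is already almost-biregular on its $A_1$-side, and I would apply Lemma~\ref{biregular to regular} (preceded if necessary by Lemma~\ref{bootstrap almost reg}) to pass to an induced subgraph where both sides are almost-regular, without losing too many triples. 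From there two structured outcomes present themselves. Either a pigeonholing / dependent-random-choice-style argument---after further restricting some $J_x$ to a core $J' \subseteq J_x$ of size $h$ on which $\binom{h}{k}$ chosen witnesses $x'_S \in A_1 \setminus \{x\}$ have neighbourhoods satisfying $N(x'_S) \cap J' = S$ exactly---produces an induced $1$-subdivision of $K_h^{(k)}$ (the first forbidden bullet); or the abundance of bipartite $4$-cycles in the regularised subgraph permits an application of Lemma~\ref{deletion} with $F = C_4$ to extract a $C_4$-free induced subgraph of average degree $\ge d^{\varepsilon/125}$ (the third forbidden bullet). Carefully tuning the threshold $\binom{h}{k}/(2\binom{D}{k})$ together with the bipartite near-regularity parameters to guarantee that at least one of these two extractions always succeeds is the most delicate part, and is where the precise exponents $1/5$, $5\varepsilon$ and $\varepsilon/125$ in the statement arise.
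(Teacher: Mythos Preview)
Your reduction to Claim~\ref{make regular} is fine, but the rest of the argument breaks down because the independent set $J_x$ you extract is far too small. You take $|J_x|=h$, a constant independent of $d$; the paper instead works with $|J_x|=\sqrt{d/20}$, obtained via Lemma~\ref{independent set or dense} (this is where bullets two and three are genuinely used). Two concrete failures follow from your choice.

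First, nothing prevents a single $x'\in A_1\setminus\{x\}$ from satisfying $J_x\subset N(x')$. In that case \emph{every} $D$-subset $I\subset J_x$ has $|I\cap N(x')|=D\ge k$, so no $I_x$ exists at all; your union bound over random $I$ cannot help, and the vertex $x$ is automatically ``bad'' (indeed $\sigma_x\ge\binom{h}{k}$, already above your threshold). Moreover, in the application to Proposition~\ref{polychotomy degree bounded} one has $D=k^{Ck^3}\gg h=8k^2$, so you cannot even choose a $D$-subset of an $h$-set.

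Second, your bad-case analysis does not produce an \emph{induced} $1$-subdivision of $K_h^{(k)}$. Having many $x'$ with $S\subset N(x')$ for various $k$-subsets $S\subset J_x$ is not the same as having, for each $S\in\binom{J_x}{k}$, some $x'_S$ with $N(x'_S)\cap J_x=S$ \emph{exactly}; only the latter gives the right induced subgraph. The paper resolves this via a shattering lemma (Lemma~\ref{shattering lemma}): with $J_x$ of size $\sim\sqrt{d}$, all but at most $|J_x|<d$ vertices $x'$ have $|N(x')\cap J_x|<\delta|J_x|$ (the exceptional heavy ones are absorbed into a set $S_x$ of size $<d$, else bullet two fires), and then either one $k$-shatters an $h$-set inside $J_x$ (giving bullet one) or, by hypergraph Ramsey, some $D$-subset $I_x\subset J_x$ meets every remaining $N(x')$ in fewer than $k$ points. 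Finally the paper does \emph{not} try to make each $I_x$ clean against all of $A_1$; it only guarantees $<d$ bad companions per $x$, and then passes to an independent set $A\subset A'$ in an auxiliary ``conflict'' graph of maximum degree $<2d$. Your plan to get $I_x$ clean against everyone forces the unrealistic threshold $\sigma_x<\binom{h}{k}/(2\binom{D}{k})$ and is where the argument stalls.
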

\noindent We delay the details to the upcoming subsection (Subsection~\ref{Grandfinale}). We now quickly discuss how to deduce our polychotomies propositions. 
\begin{proof}[Proof of Proposition~\ref{polychotomy Kh}]Apply Propositions~\ref{unbalanced 2 clean} and \ref{unbalanced 2 messy} with $h:=h, k:=2,D:=2,\eps:= \frac{1}{500h}$. If none of the three forbidden bullets hold then we may pass to an induced subgraph $G'\subset G$, which lacks $4$-cycles and is the $1$-subdivision of a multihypergraph $H$ with $d(H)\ge d$. 

Since $G'$ has no $C_4$, we have that $H$ must be a simple graph (since if we had multiple edges in $H$, then this would create a $C_4$ inside $G'$). Thus, we may now apply Corollary~\ref{subdivision reduction} to find a balanced subdivision of $K_{h'}$ inside $G'$ where $h'\ge \Omega(\sqrt{d})$. Assuming $d$ is sufficiently large, we get $h'\ge h$, giving the desired result.
\end{proof}

\begin{proof}[Proof of Proposition~\ref{polychotomy degree bounded}]
    Exactly as above, apply Propositions~\ref{unbalanced 2 clean} and \ref{unbalanced 2 messy}  to $G$ with $h:=8k^2, k:=2k,D:=D,\eps := \frac{1}{1000k}$. If none of the three bullets hold, then we may as before pass to an induced subgraph $G'\subset G$ which has no $K_{k,k}$ and which is the $1$-subdivision of a $D$-uniform hypergraph $\HH$ with $d(\HH)\ge d$.

    Since $d(\HH)$ is larger than $s$, it is not hard to see that $d(G')\ge D$ ($\HH$ will have more edges than vertices, so $G'$ is a bipartite graph where all vertices in the larger part have degree $D$).
\end{proof}

\subsection{Establishing the last proposition}\label{Grandfinale}

This section is devoted to proving Proposition~\ref{unbalanced 2 messy}. After appealing to results like Claim~\ref{make regular} and Lemma~\ref{Lem: denseorC_4}, we will morally reduce to the following problem. Consider $x\in A_0$ with $N(x)\subset B$ being an independent set of size $d$, and with $|N(x)\cap N(x')|$ not being ``too large'' for any other $x'\in A_0\setminus \{x\}$. Here, we would like to find some $I_x\in \binom{N(x)}{D}$ where $|I_x\cap N(x')|<k$ for each $x'\in A_0\setminus \{x'\}$, or otherwise find an induced subgraph $G'$ isomorphic to the $1$-subdivision of $K_h^{(s)}$. 

We handle the task of finding $I_x$ or a copy of $K_h^{(s)}$ via a ``shattering lemma'' (Lemma~\ref{shattering lemma}), which is inspired by the notion of VC-dimension. Although a more quantitative shattering result follows by applying the well-known Sauer-Shelah lemma, we choose to give a simple self-contained proof in the spirit of Lemma~\ref{one sided erdos hajnal}.

Given a family $\mathcal{F}\subset \mathcal{P}([n])$, we say that $\mathcal{F}$ $k$-shatters a set $R\subset [n]$ if for every $S \in \binom{R}{k}$, there is $F\in \mathcal{F}$ such that $R\cap F=S$. 

We start with an easy preliminary observation.
\begin{lemma}\label{pre shatter}
    Let $r\ge k\ge 2$ and $n \ge 10 r^2$, and suppose $\FF\subset \PP([n])$ has the property that for every $S\in \binom{[n]}{k}$, there exists some $F\in \FF$ with $F\supset S$. Furthermore, suppose that $|F|\le n/r^{k+1}$ for every $F\in \FF$. Then, there is some $r$-subset $R\subset [n]$ which is $k$-shattered by $F$. 
\end{lemma}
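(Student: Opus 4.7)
The plan is to use a first-moment argument: pick $R$ uniformly at random from $\binom{[n]}{r}$, and bound the expected number of ``bad'' configurations. First, using the hypothesis, for each $T\in\binom{[n]}{k}$ fix some $F_T\in\FF$ with $F_T\supset T$. Observe that $\FF$ $k$-shatters $R$ if and only if $F_T\cap R=T$ for every $T\in\binom{R}{k}$, or equivalently $(F_T\setminus T)\cap R=\emptyset$ for every such $T$.

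Accordingly, for each pair $(T,y)$ with $T\in\binom{[n]}{k}$ and $y\in F_T\setminus T$, define the bad event $B_{T,y}$ to be the event that $T\cup\{y\}\subset R$. Since $|T\cup\{y\}|=k+1$, this event has probability
\[
\P(B_{T,y})=\binom{n-k-1}{r-k-1}\bigg/\binom{n}{r}\le\left(\frac{r}{n}\right)^{k+1}.
\]
The number of such pairs $(T,y)$ is at most $\binom{n}{k}\cdot\max_{T}|F_T|\le\binom{n}{k}\cdot n/r^{k+1}$, so by the union bound the expected number of bad events is at most
\[
\binom{n}{k}\cdot\frac{n}{r^{k+1}}\cdot\left(\frac{r}{n}\right)^{k+1}=\frac{\binom{n}{k}}{n^k}\le\frac{1}{k!}\le\frac{1}{2}.
\]
Since this is strictly less than $1$, with positive probability no bad event occurs, and any such outcome $R$ is $k$-shattered by $\FF$.

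There is no substantive obstacle here; the argument in fact goes through whenever $n\ge r\ge k\ge 2$, so the hypothesis $n\ge 10r^2$ leaves plenty of slack (presumably needed elsewhere in the companion arguments). A more quantitatively refined version is available via the Sauer--Shelah lemma, as the authors remark, but is not required to obtain the stated bound.
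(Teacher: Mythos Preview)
Your argument is correct and follows essentially the same first-moment approach as the paper (fix a witness $F_T$ for each $k$-set, choose $R$ at random, and union-bound over the bad events where some $F_T$ picks up an extra point of $R$); the only difference is that you sample $R$ directly from $\binom{[n]}{r}$ rather than with replacement, which is slightly cleaner and indeed makes the hypothesis $n\ge 10r^2$ unnecessary. One minor wording note: your ``if and only if'' should read ``if'' --- the fixed witnesses satisfying $F_T\cap R=T$ is sufficient for $\FF$ to $k$-shatter $R$ but not necessary, since some other $F\in\FF$ could serve --- though you only use the sufficient direction, so the proof is unaffected.
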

\begin{proof}
    For each subset $S\in \binom{[n]}{k}$, we assign some $F_S \in \FF$ so that $S\subset F_S$ (if there are multiple choices, pick one arbitrarily). 

    Now, let $x_1,\dots,x_r\in [n]$ be chosen uniformly at random (with repetitions allowed). We have that $\P(x_1,\dots,x_r\text{ are all distinct})\ge 1- \binom{r}{2}\frac{1}{n}\ge 9/10$. 

    Next, for $e\in \binom{[r]}{k}$, we bound the probability of the event, $\EE_{e}$, that $(x_i)_{i\in e}$ are all distinct and $x_{\ell}\in F_{\{x_i:i\in e\}}$ for some other $\ell\in [r]\setminus e$. Since $|F_{\{x_i:i\in e\}}|\le n/r^{k+1}$, and each $x_i$ is chosen uniformly at random, a union bound gives $\P(\EE_{e}) \le (r-k)/r^{k+1}\le 1/r^k$. Summing over the $e$, we have that $\P(\EE_{e} \text{ holds for some }e)\le \binom{r}{k}\frac{1}{r^k}\le 1/2$. 

    Thus, with positive probability $x_1,\dots,x_r$ are all distinct, and none of the events $\EE_e$ happen. Which means that $R:=\{x_1,\dots,x_r\}$ is a set of size $r$ which is $s$-shattered by $\FF$.
\end{proof}

We now simply apply the hypergraph Ramsey theorem (which says that for every $k,h,D$, there exists some finite $N$ so that every $2$-coloring of $\binom{[N]}{k}$ either has a $h$-set $R$ where $\binom{R}{k}$ is monochromatic in the first color, or a $D$-set $B$ where $\binom{B}{k}$ is monochromatic in the second color) to finish.

\begin{lemma}\label{shattering lemma}
    Let $D,h,k\ge 2$. Then, there exists $\delta:= \delta(D,h,s)>0$ so that the following holds for all sufficiently large $n$.

    Let $\FF\subset \PP([n])$ where $F<\delta n$ for each $F\in \FF$, and which does not $k$-shatter a set of size $h$. Then there exists a set $I\subset [n]$ of size $D$ where $|I \cap F|<k$ for all $F\in \FF$. 
\end{lemma}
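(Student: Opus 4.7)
The plan is to apply the hypergraph Ramsey theorem, as suggested in the sentence preceding the statement, and feed the outcome into Lemma~\ref{pre shatter}. Fix $h_1 = h_1(D,h,k)$ to be a sufficiently large constant, and $\delta = \delta(D,h,k) > 0$ to be sufficiently small; both will be chosen below. Assume $n \geq R^{(k)}(h_1, D)$, which is a constant depending only on $D, h, k$. Consider the $2$-coloring of $\binom{[n]}{k}$ in which a $k$-subset $S$ is colored \emph{blue} if $S \not\subset F$ for all $F \in \FF$, and \emph{red} otherwise. By hypergraph Ramsey, we obtain either a monochromatic blue $D$-set $I$ or a monochromatic red $h_1$-set $R$.

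In the blue case we are immediately done: every $k$-subset of $I$ is uncovered, hence $|I \cap F| < k$ for every $F \in \FF$ (for if $|I \cap F| \ge k$, then some $k$-subset of $I$ would sit inside $F$, contradicting blueness).

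In the red case, every $k$-subset of $R$ is contained in some $F \in \FF$, and the goal becomes to produce a $k$-shattered $h$-subset of $R$, which contradicts the hypothesis. The natural tool is Lemma~\ref{pre shatter}, applied to the trace family $\FF|_R := \{F \cap R : F \in \FF\}$ on the universe $R$ of size $h_1$, with parameter $r := h$. Coverage of $\binom{R}{k}$ by $\FF|_R$ is automatic; the remaining hypothesis to verify is the size bound $|F \cap R| \le h_1/h^{k+1}$ for every $F \in \FF$.

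\textbf{The main obstacle} is precisely enforcing this size bound, since a priori a single set $F \in \FF$ could intersect $R$ in up to $h_1$ elements. The idea to overcome this is an iterative refinement: if some trace $F \cap R$ is too large, then $F \cap R$ is itself a sub-universe of size at least $h_1/h^{k+1}$ inside of which every $k$-subset is still contained in some member of $\FF$ (since $F \cap R \subset R$). We therefore recurse on $F \cap R$; each pass either succeeds in applying Lemma~\ref{pre shatter} (yielding the $k$-shattered $h$-set, and hence the contradiction) or shrinks the universe by at most a factor of $h^{k+1}$. Choosing $h_1$ as an iterated exponential in $D, h, k$ gives enough headroom for this descent to run until Lemma~\ref{pre shatter} applies, while keeping $\delta$ a fixed positive constant depending only on $D,h,k$. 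Balancing the parameters $\delta$ and $h_1$ so that the iteration closes out cleanly is the delicate technical part of the argument.
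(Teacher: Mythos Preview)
Your blue case is fine, but the red case has a genuine gap. The iterative refinement need not make any progress: nothing prevents a single $F\in\FF$ from containing all of $R$, in which case $F\cap R=R$ and recursing on $F\cap R$ returns you to exactly the same universe. Concretely, take $\FF=\{A\}$ for a single set $A$ of size $\lfloor \delta n/2\rfloor$. Any red $h_1$-set $R$ must satisfy $R\subset A$ (since every $k$-subset of $R$ lies in $A$), so the only trace is $A\cap R=R$, which is certainly larger than $|R|/h^{k+1}$; your descent loops forever. Note that this $\FF$ does not $k$-shatter any set of size larger than $k$, so the lemma's conclusion must be reached by producing $I$, not by deriving a contradiction in the red branch. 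Your phrase ``shrinks the universe by at most a factor of $h^{k+1}$'' is a \emph{lower} bound on the new size, not an upper bound, so it says nothing about termination; making $h_1$ larger buys room to shrink but does not force any shrinking to occur.

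The paper sidesteps this by reversing the order of the two tools. It first applies Lemma~\ref{pre shatter} on the whole ground set $[n]$ to the enlarged family $\FF\cup\GG$, where $\GG$ consists of all $k$-subsets of $[n]$ not contained in any member of $\FF$. Every set in $\FF\cup\GG$ has size at most $\max(\delta n,k)$ and together they cover $\binom{[n]}{k}$, so with $\delta=1/N^{k+1}$ (where $N$ is the relevant hypergraph Ramsey number) Lemma~\ref{pre shatter} yields a set $X$ of size $N$ that is $k$-shattered by $\FF\cup\GG$. Only then is Ramsey invoked, inside $X$, colouring $S\in\binom{X}{k}$ red if $S=F\cap X$ for some $F\in\FF$. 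The shattering has already been secured: a red $h$-set is then $k$-shattered by $\FF$ itself (contradiction), while in a blue $D$-set every $k$-subset lies in $\GG$ and hence avoids every $F\in\FF$, giving the desired $I$.
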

\begin{proof}
    Let $N$ be the 2-color Ramsey number of $K_h^{(k)}$ and $K_D^{(k)}$ and let $\delta := 1/N^{k+1}$. 

    Now consider a family $\FF\subset \PP([n])$ where $|F|<\delta n$. Define $\GG\subset \binom{[n]}{k}$ to be the set of all $S\in \binom{[n]}{k}$ such that $S\not \subset F$ for all $F\in \FF$. As $n$ is large, we have that $\FF\cup \GG$ satisfies the assumptions of Lemma~\ref{pre shatter} (with $k:=k,r:=N$), thus we can find some set $X$ of size $N$ which is $k$-shattered by $\FF \cup \GG$. 
    
    For each $k$-subset $S\in \binom{X}{k}$, we color it red if $S = F\cap X$ for some $F\in \FF$, and otherwise we color it blue. Note that if $S$ is colored blue, we must have that $S\in \GG$. By our choice of $N$, we can either find a set $R\in \binom{X}{h}$ where all $k$-subsets are red (which implies that $\FF$ $s$-shatters a set of size $h$), or a set $B\in \binom{X}{D}$ where all $k$-subsets are blue. By assumption $R$ does not exist, so we are done taking $I:= B$ (we cannot have $|I\cap F|\ge k$ for any $F\in \FF$, as this would mean $B$ contains a non-blue $s$-subset).
\end{proof}

\hide{\begin{lemma}\label{Lem: 2-shatter}
    Let $r\ge 2$ and $n\geq 10r^2$ and suppose we have a family of subsets $\mathcal{F}\subset \mathcal{P}([n])$ with the following two properties. 
    \begin{enumerate}
        \item For all $F\in \mathcal{F}$, $|F|\leq n/r^3$. 
        \item For every $x,y\in [n]$, there is $F\in \mathcal{F}$, with $\{x,y\}\subset S$. 
    \end{enumerate}
    Then, $\mathcal{F}$ $2$-shatters a set of size $r$. 
\end{lemma}

\begin{proof}
    For each pair $e\in \binom{[n]}{2}$, we assign some $F_e \in \FF$ so that $e\subset F$ (if there are multiple choices, pick one arbitrarily). 

    Now, let $x_1,\dots,x_r\in [n]$ be chosen uniformly at random (with repetitions allowed). We have that $\P(x_1,\dots,x_r\text{ are all distinct})\ge 1- \binom{r}{2}\frac{1}{n}\ge 9/10$. 
    
    Next, for $i,j\in [r]$, we bound the probability $\EE_{i,j}$, that $x_i\neq x_j$ and $x_k\in F_{\{x_i,x_j\}}$ for some other $k\in [r]\setminus\{i,j\}$. Since $|F_{\{x_i,x_j\}}|\le n/r^3$, and each $x_k$ is chosen uniformly at random, a union bound gives $\P(\EE_{i,j}) \le (r-2)/r^3\le 1/r^2$. Summing over the $i,j$, we have that $\P(\EE_{i,j} \text{ holds for some }i,j)\le \binom{r}{2}\frac{1}{r^2}\le 1/2$. 

    Thus $\P(\{x_1,\dots,x_r\}\text{ is a set of size $r$ which is $2$-shattered by $\FF$})\ge 4/10>0$.

    \hide{\zh{OLD PROOF BELOW:}

    Delete sets from $\mathcal{F}$ maintaining property $(2)$. Let $\mathcal{F}'$ be this new family.
    Observe that for every $S\in \mathcal{F}'$, there must exist some $x\neq y\in S$ where no other $S'\neq S$ contains both $x,y$. 
   Consider an auxiliary graph $G$ on $[n]$ where we add an edge $(x,y)$ if that is the private pair of vertices of a set $S \in \mathcal{F}'$. 
   Now, there must exist at least ${n \choose 2}/{s \choose 2}$ such edges since each $S$ contributes with at most ${s \choose 2}$ private edges and hence there is a vertex $v_1$ with degree $d_1\geq n/2s^2$. Let $N_G(v_1)=\{x_1,\ldots, x_{d_1}\}$ and let the sets $S_1,\ldots, S_{d_1} $ be so that $(v_1,x_i)$ is a private edge of $S_i$.
   
   We now construct a nice neighbourhood $N_1\subset N_G(v_1)$ of $v_1$. Indeed, choose $x_1\in V_1$ and delete from $V_1$ all other vertices of $S_1$ and add $x_1$ to $N_1$. Note that we delete at most $s-1$ vertices from $V_1$. We choose now a leftover vertex $x_{2}$ (without loss of generality) add it to $N_1$ and again delete from $V_1$ all other vertices of $S_2$, we do the same until there are no vertices left. It is clear at the end of the process $\ell\coloneqq |N_1|\geq d_1/s\geq n/2s^3\geq n/s^6$. Let $N_1=\{x_{i_1},\ldots, x_{i_{\ell}}\}$, note by construction we have that $S_{x_j}\cap N_1=\{x_{i_j}\}$ and no $S'\in \mathcal{F}'$ distinct from all $S_{x_{i_{j}}}$ contain ${v_1, x_{i_f}}$, for any $f\in [\ell] $. 
   
   Now, look at the induced graph $G[N_1]$ and precede as above i.e. pick a vertex $v_2$ of degree $|N_1|/s^2$ in $G$ and find a subset $N_2\subset N_G(v_2)\cap N_1$ as above of order at least $|N_1|/s^6$. It is clear that after $r$ steps the set $\{v_1,\ldots v_r\} $ is $2$-shattered.  }
     
\end{proof}}

Finally we deduce Proposition~\ref{unbalanced 2 messy}.

\begin{proof}[Proof of Proposition~\ref{unbalanced 2 messy}]
    We apply first Claim~\ref{make regular} so we may find some $A'\subset A_0$ with $|A'|>3 d^{D+2}|B|$, so that $G[A'] = \emptyset$ and $d_B(x) \in [d/20,10d]$ for each $x\in A'$. Thus, the requirements in the bullet points in order to apply Lemma~\ref{unbalanced 2 clean} are satisfied. All that is left is to find the appropriate choices for $I_x$. Actually, we will have to pass to a subset $A\subset A'$ with $|A|\ge |A'|/(3d)$ and find the $I_x$, for all $x\in A$. 
    
    To find the $I_x$, we shall use the following result.
    \begin{claim}\label{find unique pair}
        Let $x\in A'$ be any vertex. Suppose we cannot find an induced subgraph $G'$ satisfying any of the three bullets (in Proposition~\ref{unbalanced 2 messy}), then there must exist some independent set $I_x = \{y_1,\dots,y_D\}\in \binom{N_B(x)}{D}$ where $\#\{x'\in A' \setminus \{x\}: |N(x')\cap I_x|\ge s\} <d$.
    \end{claim}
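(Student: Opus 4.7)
The plan is to argue by contradiction: assume no independent set $I_x$ satisfying the stated property exists, and derive that one of the three forbidden bullets must hold. Let $W := N_B(x)$, so $|W|\in[d/20,10d]$ by Claim~\ref{make regular}, and fix $n_0 := c\,d^{5\varepsilon}$ for a small absolute constant $c$. First I would dispose of the case $\alpha(G[W])<n_0$: by Tur\'{a}n's theorem one has $d(G[W])\ge (|W|-1)/\alpha(G[W]) \gg (|W|)^{1-5\varepsilon}$, while $|W|\ge d/20\ge d^{1/5}$ for large $d$, so $G[W]$ itself is an induced subgraph realizing bullet~2, a contradiction. Thus we may fix an independent subset $W^*\subset W$ of size $n_0$.

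Set $\delta_0 := \delta(D,h,k)$ from Lemma~\ref{shattering lemma}, and partition $A'\setminus\{x\}$ as $N_{\mathrm{small}} := \{x' : |N(x')\cap W^*|<\delta_0 n_0\}$ and $N_{\mathrm{large}} := (A'\setminus\{x\})\setminus N_{\mathrm{small}}$. Under the contradiction hypothesis, for every $I\in \binom{W^*}{D}$ (automatically independent in $G$, since $W^*$ is), at least $d$ vertices $x'\in A'\setminus\{x\}$ satisfy $|N(x')\cap I|\ge k$. In the subcase $|N_{\mathrm{large}}|<d$, for each such $I$ at least one witness must lie in $N_{\mathrm{small}}$. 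Applying the contrapositive of Lemma~\ref{shattering lemma} to $\FF := \{N(x')\cap W^*: x'\in N_{\mathrm{small}}\}$ (every $F\in \FF$ has $|F|<\delta_0 n_0$, and $n_0$ is large enough in terms of $D,h,k$ for the Lemma), we obtain that $\FF$ $k$-shatters some $R\subset W^*$ of size $h$. Choosing a witness $x'_S\in N_{\mathrm{small}}$ for each $S\in \binom{R}{k}$, the induced subgraph $G[R\cup \{x'_S:S\in \binom{R}{k}\}]$ is the $1$-subdivision of $K_h^{(k)}$: $R$ is independent (sitting inside $W^*$), the $x'_S$'s are independent (sitting inside $A'$), and each $x'_S$ is adjacent to $R$ exactly along $S$ by the definition of $k$-shattering. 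This gives bullet~1, a contradiction.

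The remaining subcase $|N_{\mathrm{large}}|\ge d$ is the main obstacle, and is where I expect the argument to be most delicate. Here we have many ``co-vertices'' $x'$, each sharing $\ge \delta_0 n_0$ neighbors with $x$ in $W^*$; by convexity some non-edge $\{y,y'\}\subset W^*$ has $\ge (d/2)\delta_0^2$ common neighbors in $N_{\mathrm{large}}$, furnishing many $C_4$s concentrated in the bipartite graph $G[N_{\mathrm{large}}\cup W^*]$. The plan is to convert this $C_4$-density into either bullet~2 or bullet~3 by a double-counting / deletion argument in the spirit of Lemma~\ref{Lem: denseorC_4}: first sub-sample $N_{\mathrm{large}}$ down to roughly $n_0$ vertices to obtain an almost-regular bipartite subgraph, then either apply Lemma~\ref{Lem: denseorC_4} directly or invoke Lemma~\ref{deletion} on the (many) $4$-cycles to prune them. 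The hardest quantitative point will be matching the vertex-count threshold $n'\ge d^{1/5}$ required for bullet~2 when $\varepsilon$ is small; I expect this forces one to enlarge the candidate $G'$ beyond the bipartite piece, for instance by incorporating neighborhoods $N(y)\cup N(y')$ of the dense pair identified above, and to carefully trade off the size of $n_0$ against the degree bounds inherited from $d$-degeneracy and Claim~\ref{make regular}.
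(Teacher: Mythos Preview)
Your overall structure---find a large independent set inside $N_B(x)$, split $A'\setminus\{x\}$ into vertices with large versus small overlap with it, handle the small-overlap vertices via the shattering lemma (bullet~1), and handle the large-overlap vertices via a density argument (bullet~2)---matches the paper's proof. The gap is exactly where you locate it: the size of your independent set $W^*$.

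You take $|W^*|=n_0\approx d^{5\eps}$, because that is all Tur\'an can give (you need $|W|/n_0\ge |W|^{1-5\eps}$). But then in the subcase $|N_{\mathrm{large}}|\ge d$, the natural dense induced subgraph $G[W^*\cup N']$ with $|N'|=n_0$ has only $\approx d^{5\eps}$ vertices, far short of the $d^{1/5}$ threshold in bullet~2 once $\eps$ is small (and in the applications $\eps$ is tiny, e.g.\ $\eps=1/(500h)$). Your proposed salvage does not close this: applying Lemma~\ref{Lem: denseorC_4} to a graph of average degree $\approx \delta_0 n_0\approx d^{5\eps}$ yields in the $C_4$-free branch only $d(G'')\ge d^{O(\eps^2)}$, which is weaker than the $d^{\eps/125}$ required by bullet~3; and enlarging via $N(y)\cup N(y')$ for $y,y'\in B$ need not give anything dense (the many common neighbours you identified all lie in the independent set $A'$).

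The paper's fix is to take the independent set $J\subset N_B(x)$ of size $\sqrt{d/20}$ from the outset. Tur\'an is then too weak to guarantee such a $J$, so instead one invokes Lemma~\ref{independent set or dense} on $G[N_B(x)]$: if there is no independent set of size $\sqrt{|N_B(x)|}$, that lemma directly outputs a $G'$ satisfying bullet~2 (with $n'\ge |N_B(x)|^{1/4}>d^{1/5}$) or bullet~3. With $|J|\approx d^{1/2}$ in hand, the large-overlap case becomes a one-liner: if $|S|\ge |J|$ (your $N_{\mathrm{large}}$, with threshold $|J|^{1-\eps}$), then $G[J\cup S']$ for any $S'\subset S$ of size $|J|$ has $2|J|\ge d^{1/5}$ vertices and average degree $\ge |J|^{1-\eps}$, satisfying bullet~2 directly---no sub-sampling or $C_4$-counting needed. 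Once $|S|<|J|<d$, your shattering argument on $\FF=\{N(x')\cap J:x'\in A'\setminus(\{x\}\cup S)\}$ goes through verbatim and finishes the proof.
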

    \noindent We shall prove the above claim in a moment. But first, we quickly show how to deduce our result using this claim.

    For each $x\in A'$, we can fix some $I_x$ given by Claim~\ref{find unique pair}. Now create an auxiliary graph $H$ on $A'$, where $x\sim x'$ if $|I_x\cap N(x')|\ge s$ and/or $|I_{x'}\cap N(x)|\ge s$. We have that $\Delta(H)<2d$, so we can greedily find an independent set $A\subset V(H) = A'$ with $|A|\ge |A'|/(2d+1)\ge |A'|/(3d)$. 
    
    This choice of $A$ has all the desired properties. Indeed, all the bulleted assumptions from Proposition~\ref{unbalanced 2 clean} still hold. Meanwhile for $x\in A$ with $I_x = \{y_1,\dots,y_D\}$, we have $\{x'\in A\setminus \{x\}: |N(x')\cap I_x|\ge s\} \subset N_H(x)\cap A =\emptyset$ since $H[A] = \emptyset$ (so the required conditions about the $I_x$ are satisfied).

    It remains to prove Claim~\ref{find unique pair}.
    \begin{proof}[Proof of Claim~\ref{find unique pair}]
        Fix $x\in A'$, and assume none of the aforementioned $G'$ exist. By construction of $A'$, we have that $d_B(x)\ge d/20$.
    
        First, we claim there must be an independent set $J=J_x\subset N(x)$ with $|J|=\sqrt{d/20}$. Assuming otherwise, then applying Lemma~\ref{independent set or dense} to $G[N(x)]$ with $\eps:=\eps$, we get some induced subgraph $G'\subset G[N(x)]$ satisfying either the second or third forbidden bullet (since $d_B(x)\ge d/20> d^{4/5}$ assuming $d$ is large).
        
        We now seek to find $I_x$ within $J$. Let $S=S_x$ be the set of vertices $x'\in A_0\setminus \{x\}$ with $d_J(x')\ge |J|^{1-\eps}$, whose neighborhood correlates heavily with $J$. We must have $|S|<|J|$, otherwise we can pass to an induced subgraph $G'\subset G[J\cup S]$ on $2|J|\ge d^{1/5}$ vertices with average degree $\ge |J|^{1-\eps/2}$, which satisfies the second forbidden bullet (assuming $2\le n^{\eps/10}$, which holds when $d$ is large).

        We shall now find some $I_x\in \binom{J}{D}\subset \binom{N(x)}{D}$ where $\{x'\in A': |N(x')\cap I_x|\ge s\} \subset \{x\}\cup S$. Since $|S|<|J|<d$ (and recalling $J$ is an independent set), this will complete the proof.

        Write $\FF:= \{N(x')\cap N(x): x'\in A'\setminus (\{x\}\cup S)\}$. Since $d$ is sufficiently large, we have that $|F|<\delta |J|$ for every $F\in \FF$ (where $\delta = \delta(h,D,s)>0$ is the constant from Lemma~\ref{shattering lemma}). Now assuming (for the sake of contradiction) that no such $I_x$ exists, then by Lemma~\ref{shattering lemma}, $\FF$ will $s$-shatter a set of size $h$ (assuming $d$ is sufficiently large so that $|J|$ is sufficiently large). 
        
        This implies that there are distinct vertices $y_1,\dots,y_h\in J$ so that for each $e\in \binom{[h]}{s}$, there exists some $x_e \in A'$ where $N(x_e) \cap \{y_1,\dots,y_h\} = \{y_i:i\in e\}$. Taking $G' := G[\{y_1,\dots,y_h\}\cup \{x_e:e\in \binom{[h]}{s}\}]$, we have that $G'$ is the $1$-subdivision of $K_h^{(s)}$ (since $J,A'$ are both independent sets within $G$). So this $s$-shattered set would mean we have a $G'$ satisfying the first forbidden bullet, which can't happen. So the choice of $I_x$ must exist.
    \end{proof}
\end{proof}

\hide{\section{Proof of Theorem~\ref{thm: main} }
Let $G$ be a graph with $d(G)\geq C(t)k^{100t}$ and $\omega(G)\leq k$.
Since we may pass to a subgraph of $G$ of highest average degree we may and will assume that $G$ is $d$-degenerate and has $d(G)\ge d,\delta(G)\geq d/2$.

Applying Lemma~\ref{Lem:dichotomy} to $G$ with $L=10^{10}d^4$, we either find an induced subgraph $G'\subset G$ with $d(G')=\Omega(\frac{d}{\log^2(d)})$ and $\Delta(G')=O(\log^2(d)d(G'))$ or a partition $V(G)=A\cup B$, where $|A|\geq \frac{L|B|}{2}$ and $e(G[A,B])\geq \frac{nd}{4}$. 
Suppose the second case holds. Then, applying Lemma~\ref{Lem:unbalanced} to $G$ and the partition $A,B$, using the same choice of $d$, we obtain an induced $t$-theta graph. 

We may then assume the first case holds. 
Let $G'$ be such an induced subgraph. We shall apply Lemma~\ref{Lem: C_4} to $G'$ with $\varepsilon=1/20$. Note that $\Delta(G')=O(\log^2(d)d(G'))\leq (d)^{1+\varepsilon}$ provided $d$ is large enough (which we assume by taking $C(t)$ sufficiently large). Furthermore, due to Theorem~\ref{KOsubd}, one can find an induced subdivision of $K_{t+2}$ inside $H$  by taking $C(t)$ sufficiently large (which contains an induced subdivision of $K_{2,t}$, which is a $t$-theta graph). This already suffices to answer Question~\ref{theta question} in the affirmative.

We shall now focus on establishing Theorem~\ref{thm: main}. We require a more quantitative result to replace Theorem~\ref{KOsubd}. In \cite[Corollary~1.6]{DGHSM}, it is proven that $C(k)$ can be taken to be $k^{O(1)}$ in Theorem~\ref{KOsubd}, which immediately implies Theorem~\ref{thm: main}. However, that proof relies upon many results proved throughout \cite{DGHSM} (some of these being a bit complicated). We shall show how to deduce our main theorem with a simpler argument. 

We need the following:

\begin{proposition}{\cite[Proof of Theorem~7.1 Case 2]{DGHSM}}\label{find subdivision}
    Let $G$ be a $\{C_3,C_4\}$-free graph with average degree $d = d(G)\ge \Delta(G)^{1-1/100}$. Then $G$ contains an induced bipartite subdivision of $K_k$ with $k= \Omega(d^{1/5})$.
\end{proposition}
\begin{remark}
    The proof of Proposition~\ref{find subdivision} given in \cite{DGHSM} is a self-contained probabilistic sampling trick, combined with Theorem~\ref{BTsubd}. Essentially it is just a variant of the sampling trick which we applied to $G[A'\cup B]$ in the proof of Lemma~\ref{Lem:unbalanced}.
\end{remark}
\noindent The plan is now to repeatedly apply the dichotomy from Lemma~\ref{Lem:dichotomy}, until we can apply Proposition~\ref{find subdivision}.

Let $H\subset G'$ be a $C_4$-free induced subgraph where $d_1\coloneqq d(H)\geq d^{1/10}\geq d^{1/12} $. Let $H'\subset H$ be a subgraph of $H$ with highest possible average degree. 
Hence, $d_2\coloneqq d(H')\geq d^{1/12}$, $\delta(H')\geq d_2/2$ and $H$ is $d_2$-degenerate. We apply once again Lemma~\ref{Lem:dichotomy} to $H'$ with $L=10^{10}{d_2}^4$. If the second case holds exactly as before we would obtain an induced $t$-theta graph. Otherwise, we find an induced subgraph $H''\subset H'$ with 
$d_3\coloneqq d(H'')=\Omega\left(\frac{d_2}{\log^2(d_2)}\right )\geq d^{1/15}$ and $\Delta(H'')=O(\log^2(d_3)d_3)\leq {d_3}^{(1+1/100)}$. 
We are going to show that $H''$ must contain an induced subdivision of $K_{2,t}$ which concludes our proof. 
It is easy to see that as $H''$ is $C_4$-free it contains at most $2nd_3^{1+1/100}$ triangles as the neighborhood of every vertex contains at most a matching. The same argument as in the proof of Lemma~\ref{Lem: C_4} allow us to pass to an induced subgraph of $H''$ with still high average degree and no triangles. We may then without loss of generality assume that $H''$ contains no triangles. For clarity we state here the properties of our graph $H''$. 
\begin{itemize}
    \item $d(H'')\coloneqq d_3\geq d^{1/15} $;
    \item $\Delta(H'')\leq d_3^{(1+1/100)}$;
    \item girth of $H''$ is greater or equal to $5$;
\end{itemize}

As of our final step we have to find the required induced $t$-theta graph. We observe that as $H''$ is $C_4$-free $V(H'')\coloneqq n \geq cd_3^{2}$, ($\ex(n, C_4)=O(n^{3/2})$).

\section{New bounds for triangle-free graphs}

\begin{lemma}
    Let $G$ be a graph with no induced copy of $K_{s,t}$.

\end{lemma}

\begin{lemma}
    
    Let $G$ be a triangle-free graph, with no copy of $K_{s,t}$. of

    Fix $\eps$ small wrt $s,t$. Assume $\delta(G) \ge d$ and $\Delta(G)\le d^{1+\eps}$.
    
    Then we can pass to a $C_4$-free subgraph of polynomial degree which is $C_4$-free.     
\end{lemma}}

\section{Concluding Remarks}
We have shown that every degree-bounded class of graphs is polynomially degree-bounded. 
It could be interesting to classify all degree-bounded classes with a degree bounding function which grows linearly. 

Now, regarding $\chi$-bounded classes of graphs our knowledge is much more limited. Recall, we do not even know whether for every $T$, the class of $T$-free graphs is $\chi$-bounded. This has only been verified for very specific trees (see e.g. \cite{survey}) which includes all paths. 
We conjecture that actually for every path $P$, the class of $P$-free graphs is \textit{polynomially} $\chi$-bounded. 
\begin{conjecture}\label{path conjecture}
    For every $k$, there is $C_k>0$ such that every graph $G$ with $\chi(G)\geq \omega(G)^{C_k}$ contains an induced path of length $k$. 
\end{conjecture}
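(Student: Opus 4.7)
The plan is to attempt Conjecture~\ref{path conjecture} by induction on $k$, using Theorem~\ref{thm:main2} as the central engine to reduce to the $C_4$-free case, and then running a Gy\'arf\'as-style BFS-layered argument. The base cases $k \le 4$ are already known from classical work on short induced paths.

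For the inductive step, assume the result for $k-1$ and suppose $\chi(G) \ge \omega(G)^{C_k}$ with $C_k$ large. First I would reduce to a $C_4$-free subgraph. If $G$ contains $K_{s,s}$ as a subgraph for $s = \omega(G)^{\Omega(1)}$, then by Ramsey's theorem applied inside one side of this biclique, we could find either a clique larger than $\omega(G)$ (contradiction) or an induced bipartite substructure yielding an induced $P_k$ directly. Otherwise, $G$ is $K_{s,s}$-free, and Theorem~\ref{thm:main2} supplies an induced $C_4$-free subgraph $G' \subset G$ of arbitrarily large average degree (polynomial in $\omega(G)$). By iterating the vertex-criticality reduction and combining with the fact that large average degree in a $C_4$-free graph forces large chromatic number, one should obtain an induced $C_4$-free $G'' \subset G$ with $\chi(G'') \ge \omega(G)^{C_k'}$ for some $C_k' = \Omega(C_k)$ that is still large compared to $C_{k-1}$.

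Next, working inside a vertex-critical $H \subset G''$ (so $\delta(H) \ge \chi(H)-1$), fix any vertex $v$ and form BFS layers $L_0,L_1,\ldots,L_D$. A pigeonhole gives either a single layer $L_j$ with $\chi(H[L_j]) \ge \chi(H)/(2D)$ or two consecutive layers whose bipartite subgraph supports a comparable chromatic number. In the layer case, apply the inductive hypothesis to $H[L_j]$ to obtain an induced $P_{k-1}$. The key point is that $C_4$-freeness of $H$ ensures that any vertex $u \in L_{j-1}$ adjacent to an endpoint $w$ of this $P_{k-1}$ has at most one further neighbor among the interior of the path; by a greedy/averaging argument across the candidates in $L_{j-1} \cap N(w)$, one can find such a $u$ with no interior chord, yielding an induced $P_k$.

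The principal obstacle, as in all of polynomial $\chi$-boundedness, is quantitative: the BFS depth $D$ can be as large as $|V(H)|$, and the naive pigeonhole loses too much chromatic number per level, so $C_k$ would blow up super-polynomially under a straightforward induction. To keep $C_k$ polynomial, one needs a finer templated or levelled decomposition (in the spirit of Scott--Seymour) which bounds the effective depth by a polynomial in $\omega(H)$, and which exploits the $C_4$-free structure to guarantee that each level inherits polynomial $\chi$-boundedness with a controlled exponent. Finding such a decomposition (or some entirely different non-layered argument, perhaps via iterated applications of the one-sided Erd\H{o}s--Hajnal Lemma~\ref{one sided erdos hajnal}) is where I expect the real work to lie; this is precisely the gap that has kept the $P_k$-free case of polynomial $\chi$-boundedness open for $k \ge 5$.
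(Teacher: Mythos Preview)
The statement you are attempting is Conjecture~\ref{path conjecture}, which the paper states as an \emph{open problem} in the concluding remarks; there is no proof in the paper to compare against. The paper explicitly notes that even for $P_5$ only a quasi-polynomial bound $\omega^{\log_2 \omega}$ is known (Scott--Seymour--Spirkl), so a correct proof here would resolve a well-known open question.

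Beyond this, your outline contains a concrete error that blocks the reduction step. You write that ``large average degree in a $C_4$-free graph forces large chromatic number'', but this is false: bipartite $C_4$-free graphs (e.g.\ incidence graphs of projective planes) have arbitrarily large average degree and chromatic number~$2$. Theorem~\ref{thm:main2} hands you an induced $C_4$-free subgraph $G'$ with large \emph{average degree}, not large chromatic number, and there is no general mechanism to convert one to the other inside $G'$. The usable direction is the reverse ($\chi \le$ degeneracy $+1$), so the passage from the degree-bounded world of this paper to the $\chi$-bounded world of the conjecture is exactly where the difficulty lies, and your plan does not bridge it.

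You yourself correctly identify the second obstruction: the BFS layering loses a factor of the diameter, which is not polynomially controlled, so the induction on $k$ cannot close with a polynomial exponent. (A smaller point: $C_4$-freeness only prevents a vertex $u\in L_{j-1}$ from being adjacent to two path vertices at distance~$2$ along the path; it does not by itself limit $u$ to ``at most one further neighbor'' on the path.) In short, the proposal is an honest sketch of why the problem is hard rather than a proof, and the paper makes no claim to have one.
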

Scott, Seymour and Spirkl~\cite{SSSp5} showed recently that the class $\CC$ of $P_5$-free graphs is $\chi$-bounded where $f(x)=x^{\log_2(x)}$ is a $\chi$-binding function for $\CC$. 

Recall Theorem~\ref{thm: main} states that if the average degree of a graph $G$ is sufficiently large compared with its clique number then $G$ must either contain an induced complete bipartite graph or an induced subdivision of every small graph but we cannot ensure either of the structures by itself. 

But what happens if we replace average degree with chromatic number? We observe again that neither of these structures could individually be forced, as there are graphs of arbitrarily high chromatic number and girth at least $5$ and triangle-free graphs with arbitrarily high chromatic number and no induced subdivision of the $1$-subdivision of $K_5$ \cite{counterex}. 
Scott and Seymour~\cite{bananas} showed the family of graphs without an induced subdivision of a $(\ell,t)$-theta graph i.e. a graph consisting of two vertices joined by $t$-vertex disjoint paths of length at least $\ell$ is $\chi$-bounded. We conjecture that actually avoiding the family of long balanced subdivisions of $K_{2,t}$ is enough to guarantee $\chi$-boundedness. 
\begin{conjecture}
    For every $\ell$ and $t$, there is $f_{\ell,t}: \mathbb{N} \rightarrow \mathbb{N} $ such that every graph $G$ without an induced copy of a $t$-theta graph where all paths are of the same length at least $\ell$ has $\chi(G)\leq f_{\ell,t}(\omega(G))$. 
\end{conjecture}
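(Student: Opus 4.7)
The plan is to combine the paper's main result on induced balanced subdivisions (Theorem~\ref{thm: main1}) with the standard Gy\'arf\'as reduction from chromatic number to minimum degree, and then face the residual case where the graph contains a large (not necessarily induced) biclique.

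First I would pass to a $(\chi(G)+1)$-critical subgraph of $G$, so that the minimum degree, and hence the average degree, of $G$ is at least $\chi(G)-1$. Setting $s:=\omega(G)^{C}$ for a sufficiently large constant $C=C(\ell,t)$, Theorem~\ref{thm: main1} yields one of two outcomes: either $G$ contains $K_{s,s}$ as a (not necessarily induced) subgraph, or $G$ contains an induced balanced subdivision of $K_h$ for some $h\gg t$, with some common path length $\ell'$. In the latter case, picking any two vertices of the underlying $K_h$ together with $t$ of the paths between them yields an induced balanced $t$-theta with paths of length $\ell'$; iterating Theorem~\ref{thm: main1} (or revisiting its proof) should force $\ell'\ge \ell$ by taking $s$ larger, contradicting the hypothesis on $G$.

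The remaining case, where $G$ contains a (non-induced) $K_{s,s}$, is the genuine difficulty, since $K_{s,s}$ has chromatic number $2$ and so a biclique-free hypothesis is not implied by a chromatic assumption. Here I would use the two sides $A, B$ of the biclique as anchors: fix some $a\in A$ and $b\in B$, and try to find $t$ internally vertex-disjoint induced $a$-to-$b$ paths, pairwise non-adjacent in their interiors, all of a common length at least $\ell$. A Scott--Seymour-style BFS/chromatic argument in the ``bulk'' of $G$ (as in their proof for general $(\ell,t)$-theta graphs in \cite{bananas}) should produce many long induced $a$-to-$b$ paths; then a Ramsey-type pigeonhole over path lengths would extract $t$ of a common length, followed by a cleaning step to enforce inducedness across paths.

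The main obstacle is precisely this balancing step. The Scott--Seymour argument produces induced $(\ell,t)$-thetas but does not obviously produce balanced ones, and simply picking paths of equal length from a large collection is insufficient, because one must additionally ensure the $t$ chosen paths are mutually internally vertex-disjoint and have no chords between them. I suspect a genuinely new structural ingredient is required here, such as a multi-scale Ramsey-type statement for $\chi$-critical graphs, or a way to upgrade Theorem~\ref{thm: main1} into a biclique-insensitive statement (i.e.\ a version where one does not need to pay for potential $K_{s,s}$-subgraphs). This is what I expect to occupy the bulk of any serious attempt on the conjecture.
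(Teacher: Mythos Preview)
This statement is posed in the paper as an open \emph{conjecture}; there is no proof to compare against. Your proposal should therefore be read as a sketch of an attack on an open problem, and you yourself present it as such in the final paragraph.

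You correctly flag the $K_{s,s}$ branch as a genuine obstruction. However, your handling of the complementary branch is also incomplete, not merely the biclique case. When Theorem~\ref{thm: main1} produces an induced balanced subdivision of $K_h$ with common path length $\ell'$, there is no mechanism in the paper to force $\ell'\ge\ell$. The proof of Theorem~\ref{thm: main1} proceeds by locating an induced $1$-subdivision of an auxiliary (hyper)graph (via Lemmas~\ref{Lem:unbalanced} and~\ref{almost regular} together with Corollary~\ref{subdivision reduction}) and then invoking the black box Theorem~\ref{balanced subdivision} inside that auxiliary graph; the resulting path length is whatever that black box happens to return, and it is not monotone in $s$. ``Taking $s$ larger'' increases $h$, not $\ell'$, and there is no iteration that lengthens the paths. (A side remark: two branch vertices in a subdivided $K_h$ are joined by a single direct subdivided edge, so to extract a balanced $t$-theta you must route through $t$ intermediate branch vertices and obtain paths of length $2\ell'$; this is easy to patch but does nothing for the lower bound on the length.)

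Thus both outcomes of your dichotomy are currently open, and the conjecture appears to require ideas genuinely beyond the degree-boundedness framework developed here---which is presumably why the authors state it as a conjecture rather than a corollary.
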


Maybe $f_{\ell,t}$ could even be taken to be polynomial, but this is not known even for cycles (the case of $t=2$), since a positive answer would resolve Conjecture~\ref{path conjecture}.

\subsection{Acknowledgements}
The authors would like to thank Alex Scott for helpful remarks. 

\hide{\subsection{Further questions}

\begin{conjecture}
    Let $H$ be a bipartite graph. Does there exist some $C_H$ where the following holds for $s\ge 1$. 

    For every $H$-free graph $G$ with $d(G)\ge s^{C_H}$, we either have:
    \begin{itemize}
        \item $G$ contains a $K_{s,s}$;
        \item or there is an induced subgraph $G'\subset G$ with $d(G')\ge s$ that contains no $4$-cycles.
    \end{itemize}
\end{conjecture}
Write $\tau(G)$ to denote the maximal $s$ so that $G$ contains a $K_{s,s}$, and $\kappa(G)$ to denote the maximal $k$ so that there is an induced subgraph $G'\subset G$ with $d(G')\ge k$ where $G'$ has no $4$-cycles. The above says that if $G$ is $H$-free with $d(G)\ge d$, then $\max\{\tau(G),\kappa(G)\} \ge d^{\Omega_H(1)}$. 

With some minor adjustments, the methods from our paper easily confirm this conjecture when $H= K_{s,t}$ or $H$ is the 1-subdivision of $K_h$. 

By more aggressively applying our dichotomy result (Lemma~\ref{Lem:dichotomy}),

Our methods can prove that we either have $\tau(G)\ge d^{\Omega_H(1)}$ or $\kappa(G)\ge \omega_{d\to \infty}(1)$ (second condition boring).

The above claims that we for any $H$-free family of graphs, $\FF$, we have $\max\{\tau(G)$}

\bibliographystyle{abbrv}
\bibliography{bibliography}

\end{document}